 \newtheorem{theorem}{Theorem}[section]
 \newtheorem{corollary}[theorem]{Corollary}
 \newtheorem{lemma}[theorem]{Lemma}
 \newtheorem{proposition}[theorem]{Proposition}
 \theoremstyle{definition}
 \newtheorem{definition}[theorem]{Definition}
 \theoremstyle{remark}
 \newtheorem{remark}[theorem]{Remark}
 \newtheorem{ex}[theorem]{Example}
 \numberwithin{equation}{section}
\def \sL{\mathscr L}
\def \bC {\mathbb C}
\def \bN {\mathbb N}
\def \bR {\mathbb R}
\def \bZ {\mathbb Z}
\def \cD {\mathcal D}
\def \cF {\mathcal F}
\def \cH {\mathcal H}
\def \cL {\mathcal L}
\def \cR {\mathcal R}
\def \cS {\mathcal S}
\def \cR {\mathcal R}
\def \cR {\mathcal R}
\def \fg {\mathfrak g}
\def \fS {\mathfrak S}
\def \fU {\mathfrak U}
\def \tr {\text{\rm Tr}}
\def \id {\text{\rm I}}
\def\p#1{{\left({#1}\right)}}
\def\G{{G}}
\def \sL{\mathscr L}
\def\Gh{{\widehat{G}}}
\def\Rn{{{\mathbb R}^n}}
\def\Re{{{\rm Re}\,}}
\def\Im{{{\rm Im}\,}}
\def\Gh{\widehat G}
\begin{document}

\title{Fourier multipliers on graded Lie groups}
\author[Veronique Fischer]{Veronique Fischer}
\address{
  Veronique Fischer:
  Department of Mathematical Sciences,
  University of Bath,
  Claverton Down,
Bath  BA2 7AY, United Kingdom
  \endgraf
  {\it E-mail address} {\rm v.c.m.fischer@bath.ac.uk}
  }

\author[Michael Ruzhansky]{Michael Ruzhansky}
\address{
  Michael Ruzhansky:
  Ghent University,
Department of Mathematics,
Krijgslaan 281, Building S8,
B 9000 Ghent,
Belgium
AND
Queen Mary University of London,
School of Mathematical Sciences,
Mile End Road,
London E1 4NS,
United Kingdom
  \endgraf
  {\it E-mail address} {\rm Michael.Ruzhansky@UGent.be}
  }

\keywords{Analysis on Lie groups, Fourier multipliers, 
graded nilpotent Lie groups.}

\subjclass{43A80, 43A22, 22E25}

\thanks{The authors were supported by the EPSRC Grant EP/K039407/1. The second authors would also like to acknowledge the subsequent support by FWO Odysseus 1 grant G.0H94.18N: Analysis and Partial Differential Equations, EPSRC grant EP/R003025/1, and LEVERHULME grant RPG-2017-151.}
      
\maketitle

\begin{abstract}
In this paper we study 
multipliers on graded nilpotent Lie groups
defined via group Fourier transform.
More precisely, 
we show that H\"ormander type conditions on the Fourier multipliers
 imply  $L^p$-boundedness.
 We express these conditions 
 using difference operators and positive Rockland operators.
 We also obtain a more refined condition using Sobolev spaces on the dual of the group which are defined and studied in this paper.
  \end{abstract}


\section{Introduction}

The Mihlin multiplier theorem \cite{mihlin, mihlin2} states that if a function $\sigma$ 
defined on $\bR^n\backslash\{0\}$ 
has at least $[d/2]+1$ continuous derivatives that satisfy 
\begin{equation}
\label{eq_mihlin_cond}
\forall \alpha\in \bN_0^d, \ |\alpha|\leq [d/2]+1,
\qquad
|\partial^\alpha\sigma (\xi)|\leq C_\alpha|\xi|^{-|\alpha|},
\end{equation}
then the Fourier multiplier operator $T_\sigma$
associated with $\sigma$, 
initially defined on Schwartz functions via
\begin{equation}
\label{eq_T_sigma}
T_\sigma \phi := \cF^{-1} \{\sigma \widehat \phi\},
\end{equation}
 admits a bounded extension on $L^p(\bR^d)$ for all $1<p<\infty$.
Above $[t]$ is the integer part of $t$
and  $\cF\phi=\widehat\phi$ denotes the Euclidean Fourier transform of a function $\phi$.
 H\"ormander improved the Mihlin multiplier theorem 
by showing  \cite{hormander}
that a sufficient condition for $T_\sigma$ to be bounded on $L^p(\bR^d)$ is the membership of $\sigma$ 
locally uniformly
to a Sobolev space $H^s(\bR^d)$ for some $s>d/2$, 
that is,
\begin{equation}
\label{eq_hormander}
\exists \eta\in \cD(0,\infty), \ \eta\not\equiv0,\qquad
\sup_{r>0} \|\sigma (r\, \cdot)\ \eta (|\cdot|^2)\|_{H^s}<\infty.
\end{equation}
If a multiplier satisfies the H\"ormander condition in \eqref{eq_hormander} with $s$ near enough $d/2$, then it satisfies the  Mihlin condition in
\eqref{eq_mihlin_cond}. 
Anisotropic analogues of the H\"ormander condition in \eqref{eq_hormander} have been studied by Rivi\`ere
\cite{riviere}.

In this paper, we present analogues of the H\"ormander and Mihlin conditions 
in the context of Lie groups equipped with (anisotropic) dilations,
and show that they imply the $L^p$-boundedness of the corresponding Fourier multiplier operators.
In the context of (unimodular type 1) Lie groups, the Fourier multipliers are defined 
formally as in \eqref{eq_T_sigma} 
but replacing the Euclidean Fourier transform with the group Fourier transform. A multiplier symbol $\sigma$ is now a field of operators parametrised by the dual  $\Gh$ of the group $G$.
Any two multiplier symbols may not necessarily commute.

The $L^p$-multiplier problem has been extensively studied in various contexts. 
On Lie groups, a large part of these studies were primarily concerned with spectral multipliers of one (or several) operator such as a sub-Laplacian, see e.g. \cite{alexopoulos, Muller+Ricci+Stein}
- with the difficult and still open question of the optimality of a Mihlin-H\"ormander condition in terms of the topological or homogeneous dimensions 
\cite{Hebish+Z,Muller+Stein,martini+muller} in the nilpotent case.
Much fewer works were devoted to Fourier multipliers.
The first study of Fourier multipliers on Lie groups
 goes back to 1971
with Coifman and Weiss' monograph \cite{coifman+weiss}
where they  developed  the Calder\'on-Zygmund theory in the setting of spaces of homogeneous types and as an application studied the Fourier multipliers of $SU(2)$, see also \cite{coifman+weiss1,coifman+weiss2}. 
But then the research into Fourier multipliers on compact Lie groups had been  focused on the  central multipliers  \cite{strichartz,vretare, weiss, weiss_SU}. 
This was so 
until the recent results on Fourier multipliers on compact Lie groups by the second author and Jens Wirth 
\cite{ruzhansky+wirth, ruzhansky+wirth1}, 
and by the first author
\cite{fischerJFA2020}.
To the authors' knowledge, the rest of the literature 
 on the  $L^p$-multiplier problem for Fourier multipliers on Lie groups 
    is restricted to the motion group
 (Rubin in 1976 \cite{rubin}) and
to the Heisenberg group stemming from the work of de Michele and Mauceri in 1979 \cite{dM+M}.

As in the latter papers \cite{ruzhansky+wirth, ruzhansky+wirth1,fischerJFA2020}, 
our hypotheses are expressed using difference operators.
The methods of proof rely on the Calder\'on-Zygmund theory adapted 
to the setting of spaces of homogeneous type as in \cite{coifman+weiss}, see also \cite{riviere}.
These methods are the classical approach for proving Fourier or spectral $L^p$-multiplier problems on nilpotent Lie groups. 
In the case of the Heisenberg group, our conditions recover and generalise the results in  
\cite{dM+M} when using the explicit description of   the difference operators from \cite[Section 6]{FR-monograph}.

Multiplier theorems and other results on nilpotent Lie groups have a wealth of applications, see \cite{rothschild+stein} for seminal results and motivation on analysis on nilpotent Lie groups, 
and \cite{Taylor84} for the case of the Heisenberg group.
Our Mihlin-H\"ormander result 
 was already used in \cite{cardona+ruzhansky}  and may lead to further advances in understanding Besov spaces and their applications.

\medskip

In this paper, we will give the analogues of both Mihlin and H\"ormander-type conditions
for the Mihlin-H\"ormander multiplier theorem. The former is given in terms of difference operators on 
the unitary dual $\Gh$ of the group $G$
which are analogues of derivatives with respect to dual variables in the case of $\Rn$.
The latter is given in terms of Sobolev spaces on $\Gh$ that we will define and study in this paper.
We will also see that 
Theorem \ref{thm_intro} under Mihlin-type conditions is implied by the H\"ormander-type condition 
of Theorem \ref{thm_main0}.
The  definitions of graded nilpotent Lie groups, homogeneous dimensions, dilations weights, Rockland operators, 
difference operators $\Delta^\alpha$,  amongst others will be recalled in Section \ref{sec_preliminary}.

\begin{theorem}
\label{thm_intro}
Let $G$ be a graded nilpotent Lie group
with homogeneous dimension $Q$.
Let  $\sigma=\{\sigma(\pi),\pi\in \Gh\}$ be
a measurable field of operators in $L^\infty(\Gh)$.
We assume that there exist a positive Rockland operator $\cR$ 
and an integer $N>Q/2$ divisible by the dilation weights
such that for all  $|\alpha|\leq N$
the following quantities are finite:
\begin{equation}
\label{eq_hyp_thm_intro}
 \sup_{\pi\in \Gh}
 \|\pi(\cR)^{\frac{[\alpha]}\nu}\Delta^\alpha\sigma\|_{\sL(\cH_\pi)}
\quad\mbox{and}\quad
 \sup_{\pi\in \Gh}
 \|\Delta^\alpha\sigma \ \pi(\cR)^{\frac{[\alpha]}\nu}\|_{\sL(\cH_\pi)},
\end{equation}
where $\nu$ is the degree of $\cR$.
Then  the Fourier multiplier operator $T_\sigma$ corresponding to $\sigma$ is bounded on $L^p(G)$ for any $1<p<\infty$.
Furthermore, 
$$
\|T_\sigma\|_{\sL(L^p(G))}
\leq C \sum_{[\alpha]\leq N}
 \p{\sup_{\pi\in \Gh}
 \|\pi(\cR)^{\frac{[\alpha]}\nu}\Delta^\alpha\sigma\|_{\sL(\cH_\pi)}
 +
  \sup_{\pi\in \Gh}
 \|\Delta^\alpha\sigma \ \pi(\cR)^{\frac{[\alpha]}\nu}\|_{\sL(\cH_\pi)}},
$$
with $C=C_{p,G}$ independent of $\sigma$.
\end{theorem}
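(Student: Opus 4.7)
The plan is to deduce Theorem \ref{thm_intro} from the H\"ormander-type result Theorem \ref{thm_main}, which gives $L^p$-boundedness under a uniform bound on the dyadic dilates of $\sigma$ in a Sobolev space of order $s>Q/2$ on the dual $\Gh$. The first step is to fix a positive Rockland operator $\cR$ of degree $\nu$ and a dyadic cutoff $\eta\in\cD(0,\infty)$ supported in an annulus, and to recast the H\"ormander-type condition as a uniform estimate of $\|\sigma(r\,\cdot)\,\eta(\pi(\cR))\|_{H^s(\Gh)}$ over $r>0$. Because the group dilations act on $\Gh$ by conjugating $\pi(\cR)$ by a scalar factor, the $r$-dependence collapses to the $r=1$ case up to a rescaling of the cutoff, so one only needs to bound the Sobolev norm of the single dyadic piece $\sigma\,\eta(\pi(\cR))$ by the right-hand side of the stated inequality.

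The heart of the argument is to translate the uniform operator-norm bounds \eqref{eq_hyp_thm_intro} into this Sobolev norm bound. With $H^s(\Gh)$ defined via $\pi(\cR)^{s/\nu}$ and a weighted Hilbert--Schmidt type integral against the Plancherel measure, and using the duality between difference operators $\Delta^\alpha$ on the symbol side and multiplication by the homogeneous coordinate polynomials $x^\alpha$ on the kernel side, the Sobolev norm of the dyadic piece decomposes into a sum of Plancherel integrals of terms of the form $\|\pi(\cR)^{a/\nu}\Delta^\alpha\sigma\,\pi(\cR)^{b/\nu}\|_{\sL(\cH_\pi)}$ with $a+b+[\alpha]$ of weighted length at most $N$. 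The divisibility of $N$ by each dilation weight $\upsilon_j$ ensures every such expression is a well-defined integer-order object, and each is bounded in operator norm via \eqref{eq_hyp_thm_intro} together with complex interpolation between its purely-left and purely-right weighted versions. Finally, passing from these pointwise operator norms on $\cH_\pi$ to the integral against the Plancherel measure costs a factor controlled by the ``volume'' of the annular shell $\{\pi\in\Gh:\pi(\cR)^{1/\nu}\sim 1\}$, which is absorbed precisely because the chosen $s=N$ exceeds $Q/2$.

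The main obstacle is this second step: converting operator-norm bounds into the Hilbert--Schmidt/Plancherel integral that defines the Sobolev norm on $\Gh$. It demands the full Sobolev-space-on-$\Gh$ theory developed earlier in the paper, and relies crucially on the noncommutativity of $\pi(\cR)$ and $\Delta^\alpha\sigma$: this is precisely why both the left-weighted and right-weighted quantities appear in \eqref{eq_hyp_thm_intro}, and a Stein-type interpolation between them is what produces the mixed bounds needed for the two-sided expressions $\pi(\cR)^{a/\nu}\Delta^\alpha\sigma\,\pi(\cR)^{b/\nu}$. Once the dyadic Sobolev norms are bounded uniformly in $r$ by the displayed quantity, Theorem \ref{thm_main} immediately yields both the $L^p$-boundedness of $T_\sigma$ and the quantitative operator-norm inequality.
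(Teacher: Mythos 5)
Your top-level strategy --- deducing Theorem \ref{thm_intro} from Theorem \ref{thm_main} by showing that the operator-norm bounds \eqref{eq_hyp_thm_intro} force $\sigma\in H^N_{l.u.,R}(\Gh)\cap H^N_{l.u.,L}(\Gh)$, using dilation invariance to collapse the supremum over $r$ to a single dyadic piece --- is exactly the paper's route (Proposition \ref{prop_Hslu_suff_cond} combined with Theorem \ref{thm_main}). The problem is the central step, where you convert \eqref{eq_hyp_thm_intro} into a bound on $\|\sigma(r\cdot\pi)\,\eta(\pi(\cR))\|_{H^N}$: as described it rests on a misreading of the Sobolev space on $\Gh$. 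In this paper $H^s(\Gh)$ is \emph{not} defined via $\pi(\cR)^{s/\nu}$-weighted Hilbert--Schmidt integrals; it is $\cF_G\left(L^2(G,(1+|\cdot|)^{2s})\right)$, with equivalent norm $\sum_{[\alpha]\leq N}\|\Delta_{x^\alpha}\cdot\|_{L^2(\Gh)}$ when $N$ is divisible by the dilation weights (Lemma \ref{lem_HsGh_integer}). With that norm, the Leibniz rule \eqref{eq_Leibniz_rule_sigma} applied to $\Delta_{x^\alpha}\left(\sigma(r\cdot\pi)\eta(\pi(\cR))\right)$ produces terms $\Delta^{\alpha_1}\sigma(r\cdot\pi)\,\Delta^{\alpha_2}\eta(\pi(\cR))$, and one inserts $\pi(\cR)^{[\alpha_1]/\nu}\pi(\cR)^{-[\alpha_1]/\nu}$ on a \emph{single} side: the factor $\Delta^{\alpha_1}\sigma(r\cdot\pi)\,\pi(\cR)^{[\alpha_1]/\nu}$ is controlled in $L^\infty(\Gh)$ by the hypothesis, while $\pi(\cR)^{-[\alpha_1]/\nu}\Delta^{\alpha_2}\eta(\pi(\cR))$ is a fixed element of $L^2(\Gh)$ by Hulanicki's theorem and Proposition \ref{prop_powercR1cR2}. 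No two-sided expressions $\pi(\cR)^{a/\nu}\Delta^\alpha\sigma\,\pi(\cR)^{b/\nu}$ arise, and the Stein-type interpolation you invoke to control them --- essentially an inequality of the form $\|\cR^{a}\tau\cR^{b}\|\lesssim\|\cR^{a+b}\tau\|^{\theta}\|\tau\cR^{a+b}\|^{1-\theta}$ for noncommuting operators --- is a nontrivial claim that you neither prove nor reference. This is the genuine gap.

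You also misattribute the roles of the two hypotheses and of the condition $N>Q/2$. The left- and right-weighted suprema in \eqref{eq_hyp_thm_intro} are never combined by interpolation: the right one alone yields $\sigma\in H^N_{l.u.,R}(\Gh)$, which gives the Calder\'on--Zygmund estimate for the kernel of $T_\sigma$ and hence $L^p$-boundedness for $1<p\leq 2$; the left one alone yields $\sigma\in H^N_{l.u.,L}(\Gh)$, which gives the estimate for the adjoint kernel and hence the range $2\leq p<\infty$ (Corollary \ref{cor_thm_main}). Likewise, $N>Q/2$ is not needed to ``absorb the volume of an annular shell'' in the symbol-to-Sobolev step --- that step works for any $N\in\nu_o\bN$ --- but is required by Theorem \ref{thm_main} itself, through the Sobolev embedding of Lemma \ref{lem_HsGh_sob_embedding} and the kernel estimates of Lemmas \ref{lem_kappa_L1loc} and \ref{lem_CZ}. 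If you replace your interpolation step by the one-sided insertion described above, the argument closes and coincides with the paper's proof.
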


Theorem \ref{thm_intro} applied to the abelian Euclidean setting,
that is, $(\bR^d,+)$ with the usual isotropic dilation with $\cR$ being the Laplace operator, yields the Mihlin theorem.
It will also be the case for Theorem \ref{thm_main0}.
Indeed in the Euclidean abelian setting, $\pi(\cR)$ is replaced with $|\xi|^2$ where $\xi$ is the (Fourier) dual variable.

We now give the analogue of the H\"ormander-type condition. In Definition \ref{def_Hslu} and subsequent 
discussion we introduce and investigate uniformly local right- and left- Sobolev spaces
$H^s_{l.u.,R}(\Gh)$ and $H^s_{l.u.,L}(\Gh)$ on the unitary dual $\Gh$, respectively.
Using these spaces we can then define uniformly local Sobolev spaces on $\Gh$ by
$$ H^s_{l.u.}(\Gh):= H^s_{l.u.,R}(\Gh) \bigcap H^s_{l.u.,L}(\Gh),$$
with the norm
$$
\|\sigma\|_{H^s_{l.u.},\eta,\cR}:=\max\left(\|\sigma\|_{H^s_{l.u.,R},\eta,\cR},\|\sigma\|_{H^s_{l.u.,L},\eta,\cR}\right),
$$
depending on a choice of $\eta \in \cD(0,\infty)$ and a
positive Rockland operator $\cR$, and in Proposition \ref{prop_Hslu_norm} we
show that different choices of $\eta$ and $\cR$ lead to
equivalent norms.
Then we have

\begin{theorem} 
\label{thm_main0}
Let $G$ be a graded nilpotent Lie group.
Let  $\sigma=\{\sigma(\pi),\pi\in \Gh\}$ be
a measurable field of operators in $L^2(\Gh)$.
If $\sigma\in H^s_{l.u.}(\Gh)$
for some $s>Q/2$, where $Q$ is the homogeneous dimension of $G$,
then the corresponding operator $T=T_\sigma$ is bounded on $L^p(G)$ for any $1<p<\infty$.
Furthermore, 
$$
\|T\|_{\sL(L^p(G))}
\leq C 
\|\sigma\|_{H^s_{l.u.}\eta,\cR},
$$
where $C>0$ is a constant independent of $\sigma$
but may depend on $p,s,G$ and a choice of $\eta \in \cD(0,\infty)$ and a
positive Rockland operator $\cR$.
\end{theorem}
Theorem \ref{thm_main0} will be reformulated in Theorem \ref{thm_main} and its statement will be refined 
further in Corollary \ref{cor_thm_main}.

\bigskip

The paper is organised as follows. 
In Section \ref{sec_preliminary}, 
we recall the necessary material regarding the setting.
In Section \ref{sec_HsGh}, 
we define and study Sobolev spaces on $\Gh$.
In Section \ref{sec_MHcond}, 
we present our Mihlin-H\"ormander condition.
In Section \ref{sec_proofs}, 
we show the statements of the previous section.

\bigskip

\noindent\textbf{Notation:}
$\bN_0=\{0,1,2,\ldots\}$ denotes the set of non-negative integers
and 
$\bN=\{1,2,\ldots\}$ the set of positive integers.
If $\cH_1$ and $\cH_2$ are two Hilbert spaces, we denote by $\sL(\cH_1,\cH_2)$ the Banach space of the bounded operators from $\cH_1$ to $\cH_2$. If $\cH_1=\cH_2=\cH$ then we write $\sL(\cH_1,\cH_2)=\sL(\cH)$.
We may allow ourselves to write 
$A\lesssim B$ when $A$ is less than $B$ up to a constant, 
and $A\asymp B$ when the quantity $A$ and $B$ are equivalent in the sense that there exists a constant such that $C^{-1}A\leq B\leq C A$.

\section{Preliminaries}
\label{sec_preliminary}

In this section, after defining graded Lie groups, 
we recall their homogeneous structure, 
some general representation theory in this context  as well as the definition and some properties of
their Rockland operators.

\subsection{Graded and homogeneous Lie groups}

Here we recall briefly the definition of graded nilpotent Lie groups
and their natural homogeneous structure.
A complete description of the notions of graded and homogeneous nilpotent Lie groups may be found in \cite[ch1]{FS}
and \cite[ch3]{FR-monograph}.

We will be concerned with graded Lie groups $G$
which means that $G$ is a connected and simply connected 
Lie group 
whose Lie algebra $\mathfrak g$ 
admits an $\bN$-gradation
$\mathfrak g= \oplus_{\ell=1}^\infty \mathfrak g_{\ell}$
where the $\mathfrak g_{\ell}$, $\ell=1,2,\ldots$, 
are vector subspaces of $\mathfrak g$,
almost all equal to $\{0\}$,
and satisfying 
$[\mathfrak g_{\ell},\mathfrak g_{\ell'}]\subset\mathfrak g_{\ell+\ell'}$
for any $\ell,\ell'\in \bN$.
This implies that the group $G$ is nilpotent.
Examples of such groups are the Heisenberg group and, more generally,
all stratified Lie groups (which by definition correspond to the case $\fg_1$ generating the full Lie algebra $\fg$).

We construct a basis $X_1,\ldots, X_n$  of $\fg$ adapted to the gradation,
by choosing a basis $\{X_1,\ldots X_{n_1}\}$ of $\mathfrak g_1$ (this basis is possibly reduced to $\emptyset$), then 
$\{X_{n_1+1},\ldots,  X_{n_1+n_2}\}$ a basis of $\mathfrak g_2$
(possibly $\emptyset$ as well as the others)
and so on.
Via the exponential mapping $\exp_G : \mathfrak g \to G$, we   identify 
the points $(x_{1},\ldots,x_n)\in \bR^n$ 
 with the points  $x=\exp_G(x_{1}X_1+\cdots+x_n X_n)$ in $G$.
Consequently we allow ourselves to denote by $C(G)$, $\cD(G)$ and $\cS(G)$ etc,
the spaces of continuous functions, of smooth and compactly supported functions or 
of Schwartz functions on $G$ identified with $\bR^n$,
and similarly for distributions with the duality notation 
$\langle \cdot,\cdot\rangle$.

This basis also leads to a corresponding Lebesgue measure on $\mathfrak g$ and the Haar measure $dx$ on the group $G$,
hence $L^p(G)\cong L^p(\bR^n)$.
The group convolution of two functions $f$ and $g$, 
for instance   integrable, 
is defined via 
$$
 (f*g)(x):=\int_\G f(y) g(y^{-1}x) dy.
$$
The convolution is not commutative: in general, $f*g\not=g*f$, 
but the Young convolutions inequalities hold, so that we have
\begin{equation}
\label{eq_young}
\|f*g\|_{L^r(G)}\leq \|f\|_{L^p(G)}\|g\|_{L^q(G)}
,\quad p,q,r\in [1,\infty], \
1+\frac 1r =\frac 1p +\frac 1q.
\end{equation}

The coordinate function $x=(x_1,\ldots,x_n)\in G\mapsto x_j \in \bR$
is denoted by $x_j$.
More generally we define for every multi-index $\alpha\in \bN_0^n$,
$x^\alpha:=x_1^{\alpha_1} x_2 ^{\alpha_2}\ldots x_{n}^{\alpha_n}$, 
as a function on $G$.
Similarly we set
$X^{\alpha}=X_1^{\alpha_1}X_2^{\alpha_2}\cdots
X_{n}^{\alpha_n}$ in the universal enveloping Lie algebra $\fU(\fg)$ of $\mathfrak g$.

For any $r>0$, 
we define the  linear mapping $D_r:\mathfrak g\to \mathfrak g$ by
$D_r X=r^\ell X$ for every $X\in \mathfrak g_\ell$, $\ell\in \bN$.
Then  the Lie algebra $\mathfrak g$ is equipped 
with the family of dilations  $\{D_r, r>0\}$
and becomes a homogeneous Lie algebra in the sense of 
\cite{FS}.
We re-write the set of integers $\ell\in \bN$ such that $\fg_\ell\not=\{0\}$
into the increasing sequence of positive integers
 $\upsilon_1,\ldots,\upsilon_n$ counted with multiplicity,
 the multiplicity of $\fg_\ell$ being its dimension.
 In this way, the integers $\upsilon_1,\ldots, \upsilon_n$ become 
 the weights of the dilations and we have $D_r X_j =r^{\upsilon_j} X_j$, $j=1,\ldots, n$,
 on the chosen basis of $\fg$, and we have $X_j \in \fg_{\upsilon_j}$ for $j=1,\ldots, n$.
 The associated group dilations are defined by
$$
D_r(x)=
r\cdot x
:=(r^{\upsilon_1} x_{1},r^{\upsilon_2}x_{2},\ldots,r^{\upsilon_n}x_{n}),
\quad x=(x_{1},\ldots,x_n)\in G, \ r>0.
$$
In a canonical way  this leads to the notions of homogeneity for functions and operators.
For instance
the degree of homogeneity of $x^\alpha$ and $X^\alpha$,
viewed respectively as a function and a differential operator on $G$, is 
$$
[\alpha]=\sum_j \upsilon_j\alpha_{j}.
$$
Indeed, let us recall 
that a vector of $\mathfrak g$ defines a left-invariant vector field on $G$ 
and, more generally, 
that the universal enveloping Lie algebra of $\mathfrak g$ 
is isomorphic with the left-invariant differential operators; 
we keep the same notation for the vectors and the corresponding operators. 

Recall that a \emph{homogeneous quasi-norm} on $G$ is a continuous function $|\cdot| : G\rightarrow [0,+\infty)$ homogeneous of degree 1
on $G$ which vanishes only at 0. This often replaces the Euclidean norm in the analysis on homogeneous Lie groups, for instance in the following well-known properties:
\begin{proposition}
\label{prop_homogeneous_quasi_norm}
\begin{enumerate}
\item 
Any homogeneous quasi-norm $|\cdot|$ on $G$ satisfies a triangle inequality up to a constant:
$$
\exists C\geq 1 \quad \forall x,y\in G\quad
|xy|\leq C (|x|+|y|).
$$
It partially satisfies the reverse triangle inequality:
\begin{equation}
\label{eq_reverse_triangle}
\forall b\in (0,1) \quad
\exists C=C_b\geq 1 \quad \forall x,y\in G
\quad |y|\leq b |x| \Longrightarrow
\big| |xy| - |x|\big|\leq C|y|
.
\end{equation}
\item
Any two homogeneous quasi-norms $|\cdot|_1$ and $|\cdot|_2$ are equivalent in the sense that
$$
\exists C>0 \quad \forall x\in G\quad
C^{-1} |x|_2\leq |x|_1\leq C |x|_2.
$$
\end{enumerate}
\end{proposition}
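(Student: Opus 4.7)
My strategy is to establish (ii) first, since its conclusion (compactness of sphere-like sets) is needed for both parts of (i).

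For (ii), I would introduce a concrete reference quasi-norm, for instance $|x|_0 := \max_{1\leq j\leq n}|x_j|^{1/\upsilon_j}$, and verify that it is continuous, $D_r$-homogeneous of degree one, and vanishes only at $0$. Since $|x|_0 = 1$ forces $|x_j|\leq 1$ for every $j$, the unit level set $S_0 := \{|x|_0 = 1\}$ is Euclidean-bounded, closed, hence compact in $G\cong \bR^n$. Any other homogeneous quasi-norm $|\cdot|$ is continuous and strictly positive on $S_0$, so it attains a positive minimum $m$ and a finite maximum $M$ there. For arbitrary $x\neq 0$, writing $x = D_{|x|_0}(y)$ with $y\in S_0$ and using homogeneity yields $m\,|x|_0\leq |x|\leq M\,|x|_0$, which gives (ii) by transitivity. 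As a free corollary, $y\mapsto |y^{-1}|$ is itself a continuous homogeneous quasi-norm, so $|y^{-1}|\asymp |y|$.

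For the triangle inequality in (i), the key observation is that every dilation $D_r$ is a Lie group automorphism of $G$ (a consequence of the grading condition $[\fg_\ell,\fg_{\ell'}]\subset\fg_{\ell+\ell'}$). The set $K = \{(x,y): |x|+|y|=1\}$ is compact by (ii), and $(x,y)\mapsto |xy|$ is continuous on it, so it attains a maximum $C$. For any $(x,y)\neq (0,0)$, setting $t := |x|+|y|$ and scaling by $D_{1/t}$ places $(D_{1/t}x, D_{1/t}y)$ into $K$, which yields $t^{-1}|xy| = |D_{1/t}(xy)|\leq C$, i.e.\ $|xy|\leq C(|x|+|y|)$.

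The reverse triangle inequality is the technically delicate point and the main obstacle. By homogeneity of both sides and the dilation-invariance of the hypothesis, I would first rescale to the case $|x| = 1$, $|y|\leq b$, reducing the task to $\big||xy|-1\big|\leq C_b |y|$ on this compact set. The essential structural input is the polynomial form of the group law coming from the Baker--Campbell--Hausdorff formula and the grading: in the adapted coordinates, $(xy)_j = x_j + y_j + R_j(x,y)$, where each $R_j$ is a polynomial, homogeneous of weighted degree $\upsilon_j$, with $R_j(x,0)\equiv 0$. A monomial analysis, using that every monomial in $R_j$ has $y$-degree at least one combined with the Euclidean bounds from (ii), gives $\|xy - x\|_{\rm Eucl}\leq C\,\|y\|_{\rm Eucl}\leq C'\,|y|$ on the reduced compact set. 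To convert this Euclidean Lipschitz-type estimate into the required quasi-norm bound, I would work first with a smooth homogeneous quasi-norm $\rho$ such as $\rho(x) := (\sum_j |x_j|^{2N/\upsilon_j})^{1/(2N)}$ with $N$ a common multiple of the weights; $\rho$ is Lipschitz on compacta of $G\setminus\{0\}$, so the mean value theorem applied to $\rho$ gives the reverse inequality for $\rho$. Finally, I would transfer the estimate to an arbitrary $|\cdot|$ by running a compactness/contradiction argument directly: assuming $(|x_k y_k| - |x_k|)/|y_k|\to\infty$, I normalize via $|x_k|=1$, reduce to $|y_k|\to 0$ by extracting convergent subsequences, and then feed the polynomial coordinate expansion above into this subsequence extraction to rule out the blow-up.
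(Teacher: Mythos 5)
The paper offers no proof of this proposition at all: it is quoted as standard background from \cite{FS} and \cite{FR-monograph}, so there is nothing internal to compare against. Your proof of part (2) and of the quasi-triangle inequality in part (1) is correct and is the classical Folland--Stein compactness-plus-homogeneity argument (using that $D_r$ is an automorphism); those portions are fine.

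The genuine gap is the last step of the reverse triangle inequality \eqref{eq_reverse_triangle}: the transfer from a smooth quasi-norm $\rho$ to an arbitrary continuous one. Equivalence of quasi-norms controls ratios, not differences, so knowing $\bigl|\rho(xy)-\rho(x)\bigr|\leq C|y|$ says nothing about $\bigl||xy|-|x|\bigr|$; and your compactness/contradiction scheme stalls at exactly the needed point, because after normalising $|x_k|=1$, $|y_k|\to 0$, continuity of $|\cdot|$ only gives $|x_ky_k|-|x_k|\to 0$ with no rate, whereas you must produce the rate $O(|y_k|)$. In fact no argument can close this gap under the paper's bare definition (continuity, degree-one homogeneity, vanishing only at $0$): already on the abelian group $\bR^2$ with isotropic dilations, take $|x|:=\|x\|_{\mathrm{Eucl}}\,g(x/\|x\|_{\mathrm{Eucl}})$ with $g>0$ continuous on the circle but only H\"older of order $1/2$ at some direction; for $x$ in that direction and $y=\varepsilon v$ perpendicular one gets $|xy|-|x|\asymp\sqrt{\varepsilon}$ while $|y|\asymp\varepsilon$, so \eqref{eq_reverse_triangle} fails. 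The estimate \eqref{eq_reverse_triangle} genuinely requires Lipschitz regularity of the quasi-norm on its unit sphere (Folland--Stein build $C^\infty$ smoothness away from the origin into their definition for precisely this reason). Your mean-value-theorem argument for the smooth norm $\rho(x)=(\sum_j x_j^{2N/\upsilon_j})^{1/(2N)}$ is the correct and complete proof for such norms --- you should also note there that $xy$ stays in a compact subset of $G\setminus\{0\}$ when $|x|=1$ and $|y|\leq b$ is small, and treat the region where $|y|$ is bounded below trivially --- but the claim for an arbitrary continuous quasi-norm should simply be dropped or the definition strengthened.
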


 An example of a homogeneous quasi-norm is given via
\begin{equation}
\label{eq_quasinormnuo}
|x|_{\nu_o}:=\Big(\sum_{j=1}^n x_j^{2\nu_o/\upsilon_j}\Big)^{1/{2\nu_o}},
\end{equation}
with $\nu_o$ a common multiple to the weights $\upsilon_1,\ldots,\upsilon_n$.
 
We will use the Young inequalities together with the properties of quasi-norms in the following way:
\begin{lemma}
\label{lem_L1omegas}
Let $|\cdot|$ be a quasi-norm and let $s\geq 0$.
We set $\omega_s=(1+|\cdot|)^s$.
Let $p,q,r$ be as in Young's inequality in \eqref{eq_young}.
Then if $f$ and $g$ are measurable functions, then 
the following inequality holds (with possibly unbounded quantities):
$$
\| \omega_s \ f*g\|_{L^r(G)}
\leq 
C
\| \omega_s \ f\|_{L^p(G)}
\| \omega_s \ g\|_{L^q(G)},
$$
where the constant $C$ is independent on $f,g$
but may depend on $s,G, |\cdot|$.
\end{lemma}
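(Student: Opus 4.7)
The plan is to reduce the weighted inequality to the unweighted Young inequality \eqref{eq_young} via a pointwise submultiplicative estimate for the weight $\omega_s$ under the group law. The key preliminary observation I would establish is that there exists a constant $C_s>0$, depending only on $s$, $G$ and $|\cdot|$, such that
$$\omega_s(x)\leq C_s\,\omega_s(y)\,\omega_s(y^{-1}x),\qquad x,y\in G.$$
Writing $x=y\cdot(y^{-1}x)$ and applying the quasi-triangle inequality of Proposition \ref{prop_homogeneous_quasi_norm}(1), we have $|x|\leq C(|y|+|y^{-1}x|)$, whence
$$1+|x|\leq 1+C|y|+C|y^{-1}x|\leq (1+C)^2(1+|y|)(1+|y^{-1}x|).$$
Raising to the $s$-th power yields the submultiplicative bound with $C_s=(1+C)^{2s}$.

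With this in hand, the rest is routine. Using the definition of the group convolution, we obtain pointwise on $G$ the estimate
$$\omega_s(x)\,|(f*g)(x)|\leq\int_G\omega_s(x)\,|f(y)|\,|g(y^{-1}x)|\,dy\leq C_s\int_G|\omega_s(y)f(y)|\,|\omega_s(y^{-1}x)g(y^{-1}x)|\,dy,$$
so that $\omega_s\,|f*g|\leq C_s\,\bigl(|\omega_s f|*|\omega_s g|\bigr)$. Taking $L^r(G)$-norms and invoking Young's inequality \eqref{eq_young} with exponents $p,q,r$ delivers the desired
$$\|\omega_s\,(f*g)\|_{L^r(G)}\leq C_s\,\|\omega_s f\|_{L^p(G)}\,\|\omega_s g\|_{L^q(G)}.$$

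There is no genuine obstacle here: the only point that merits attention is that $G$ is not equipped with a genuine triangle inequality but only with one up to a multiplicative constant, which is why the proof of the submultiplicativity of $\omega_s$ must invoke Proposition \ref{prop_homogeneous_quasi_norm}(1) rather than any Euclidean intuition. The argument also makes transparent that the constant $C$ in the statement depends on $s$, $G$ and the choice of homogeneous quasi-norm, but is independent of $f$ and $g$, and that the bound holds even when some of the norms are infinite since all quantities in sight are non-negative.
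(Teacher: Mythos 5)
Your proof is correct and follows essentially the same route as the paper: the pointwise submultiplicativity $\omega_s(x)\leq C_s\,\omega_s(y)\,\omega_s(y^{-1}x)$ derived from the quasi-triangle inequality, the resulting pointwise bound $\omega_s\,|f*g|\leq C_s\,|\omega_s f|*|\omega_s g|$, and an application of Young's inequality \eqref{eq_young}. Nothing is missing.
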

 
 \begin{proof}
The triangular inequality (see Proposition \ref{prop_homogeneous_quasi_norm}) implies easily
\begin{equation}
\label{eq_omegasxy}
\exists C=C_{s,|\cdot|}\quad\forall x,y\in G\quad
\omega_s(x)\leq 
C 
\omega_s(xy^{-1})
\omega_s(y),
\end{equation}
yielding 
$\omega_s(x) |f*g|(x) 
\leq
C |\omega_s f| * |\omega_s g|.$
We conclude with Young's inequality
(see \eqref{eq_young}).
\end{proof}

 Various aspects of analysis on $G$ can be developed in a comparable way with the Euclidean setting sometimes replacing the topological dimension 
$n =\sum_{\ell=1}^\infty\dim \fg_\ell$
of the group $G$ by its \emph{homogeneous dimension}
$$
Q:=\sum_{\ell=1}^\infty \ell \dim \fg_\ell
=  \upsilon_1 +\upsilon_2 +\ldots +\upsilon_n 
 .
$$

For example, there is an analogue of polar coordinates on homogeneous 
groups with $Q$ replacing $n$, see \cite{FS}:
\begin{equation}
\label{formula_polar_coord}
\forall f\in L^1(G)
\qquad \int_G f(x)dx
=
\int_0^\infty \int_\fS f(ry) r^{Q-1} d\sigma(y) dr,
\end{equation}
where $\sigma$ is a (unique) positive Borel measure on 
the unit sphere 
$\fS:=\{x\in G\, : \, |x|=1\}$.
This implies the following simple embeddings:

\begin{corollary}
\label{cor_prop_polar_coord}
Let $|\cdot|$ be a fixed homogeneous quasi-norm on $G$.
If $s> Q/2$, 
then there exists $C>0$ such that for any measurable function $f$ we have
$$
\|f\|_{L^1(G)} \leq C \|(1+|\cdot|)^s f\|_{L^2(G)}
$$
Moreover as long as $s-\epsilon>Q/2$, 
 there exists $C>0$ such that for any measurable function $f$ we have
$$
\|(1+|\cdot|)^\epsilon f\|_{L^1(G)} \leq C \|(1+|\cdot|)^s f\|_{L^2(G)}
$$
\end{corollary}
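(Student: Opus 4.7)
The plan is to reduce both inequalities to a single Cauchy--Schwarz argument, where the burden is moved onto showing that a negative power of $(1+|\cdot|)$ belongs to $L^2(G)$. Concretely, I would write, for any measurable $f$,
$$
\int_G |f(x)|\,dx
= \int_G (1+|x|)^s |f(x)| \cdot (1+|x|)^{-s}\,dx
\le \|(1+|\cdot|)^s f\|_{L^2(G)} \; \|(1+|\cdot|)^{-s}\|_{L^2(G)},
$$
and similarly for the second inequality, factor $(1+|x|)^\epsilon = (1+|x|)^s \cdot (1+|x|)^{\epsilon-s}$ and apply Cauchy--Schwarz to get the bound $\|(1+|\cdot|)^s f\|_{L^2(G)} \; \|(1+|\cdot|)^{\epsilon-s}\|_{L^2(G)}$. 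Both inequalities will follow once I verify that $(1+|\cdot|)^{-t} \in L^2(G)$ whenever $t > Q/2$, with $t=s$ in the first case and $t=s-\epsilon$ in the second.

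To check the integrability claim, I would appeal to the polar coordinates formula of Proposition \ref{prop_polar_coord}. Applied to the radial function $(1+|\cdot|)^{-2t}$, it gives
$$
\int_G (1+|x|)^{-2t}\,dx
= \sigma(\fS)\int_0^\infty (1+r)^{-2t}\, r^{Q-1}\,dr,
$$
and this one-dimensional integral converges precisely when $2t > Q$, i.e.\ $t > Q/2$. Since any two homogeneous quasi-norms are equivalent (Proposition \ref{prop_homogeneous_quasi_norm}), the choice of $|\cdot|$ does not affect whether the conclusion holds, only the constant $C$.

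There is no real obstacle; the only minor subtlety is to make sure the exponents match the hypotheses (strict inequality $s > Q/2$, respectively $s-\epsilon > Q/2$, is exactly what makes the radial integral converge at infinity), and that the measure $\sigma(\fS)$ on the unit sphere is finite, which is part of Proposition \ref{prop_polar_coord}. The whole argument is one display of Cauchy--Schwarz followed by one display of polar coordinates.
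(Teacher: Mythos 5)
Your proposal is correct and follows essentially the same route as the paper's own proof: a single application of Cauchy--Schwarz after factoring out $(1+|x|)^s$, combined with the polar-coordinates formula of Proposition \ref{prop_polar_coord} to check that $(1+|\cdot|)^{-t}\in L^2(G)$ precisely when $t>Q/2$. The only difference is that you write out the radial integral explicitly, which the paper leaves implicit.
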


\begin{proof}[Proof of Corollary \ref{cor_prop_polar_coord}]
By Cauchy-Schwartz' or H\"older's inequality, 
we have
$$
\|(1+|\cdot|)^\epsilon f\|_{L^1(G)} \leq C_{s,\epsilon} \|(1+|\cdot|)^s f\|_{L^2(G)},
$$
where $C_s:=\|(1+|\cdot|)^{-s +\epsilon}\|_{L^2(G)}$.
Using the polar change of coordinates \eqref{formula_polar_coord}, $C_s$
 is finite for $s -\epsilon>Q/2$.
\end{proof}

We will need an $L^1$-mean value property:
\begin{lemma}
\label{lem_L1mv}
There exists $C>0$ such that 
for any $h\in G$ and any $f\in C^1(G)$
we have
$$
\|f-f(\cdot\, h)\|_{L^1(G)} \leq C \sum_{\ell=1}^n |h|^{\upsilon_\ell}
\|X_\ell f\|_{L^1(G)},
$$
and 
$$
\|f-f(h\,\cdot )\|_{L^1(G)} \leq C \sum_{\ell=1}^n |h|^{\upsilon_\ell}
\|\tilde X_\ell f\|_{L^1(G)}.
$$
\end{lemma}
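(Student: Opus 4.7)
The plan is to reduce the estimate to one-parameter translations along the basis vector fields by expressing $h$ in exponential coordinates of the second kind. Explicitly, since $G$ is connected, simply-connected and nilpotent, the Baker--Campbell--Hausdorff formula allows us to find a unique $(\eta_1, \ldots, \eta_n) \in \bR^n$ such that
$$
h = \exp(\eta_1 X_1)\exp(\eta_2 X_2)\cdots \exp(\eta_n X_n),
$$
and the map $(h_1,\ldots,h_n)\mapsto (\eta_1,\ldots,\eta_n)$ is polynomial. The crucial point is that this change of variables is compatible with the gradation: $\eta_j$ is a polynomial in $(h_1,\ldots,h_n)$ which is homogeneous of degree $\upsilon_j$ with respect to the dilations $D_r$. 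Combining this homogeneity with Proposition \ref{prop_homogeneous_quasi_norm}(2) (applied to $|\cdot|$ and the model quasi-norm \eqref{eq_quasinormnuo}), one obtains a constant $C = C_G$ independent of $h$ such that $|\eta_j|\leq C|h|^{\upsilon_j}$ for every $j$.

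Next, I would telescope along this decomposition. Setting $a_0 = e$ and $a_j = \exp(\eta_1 X_1)\cdots\exp(\eta_j X_j)$, we have
$$
f(x) - f(xh) = \sum_{j=1}^{n} \big[f(x a_{j-1}) - f(x a_{j-1}\exp(\eta_j X_j))\big].
$$
By the fundamental theorem of calculus along the one-parameter subgroup generated by the left-invariant vector field $X_j$,
$$
f(y) - f(y\exp(\eta_j X_j)) = -\int_0^{\eta_j}(X_jf)(y\exp(sX_j))\,ds.
$$
Taking $L^1(G)$-norms in $x$, using right-invariance of the Haar measure (substituting $y = xa_{j-1}$, and then again to remove the inner translation $\exp(sX_j)$), each term is bounded by $|\eta_j|\,\|X_jf\|_{L^1(G)}$. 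Summing over $j$ and applying the estimate $|\eta_j| \leq C|h|^{\upsilon_j}$ yields the first inequality.

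The second inequality is obtained by the symmetric argument: decompose $h = \exp(\tilde\eta_n X_n)\cdots\exp(\tilde\eta_1 X_1)$ (second-kind coordinates in reversed order), use the fact that the right-invariant vector field $\tilde X_j$ generates left multiplication through $\tilde X_j f(x) = \frac{d}{dt}\big|_{t=0} f(\exp(tX_j)x)$, and telescope $f(x) - f(hx)$ from the left while invoking left-invariance of Haar measure. The homogeneity argument for $|\tilde\eta_j| \leq C|h|^{\upsilon_j}$ is identical.

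The main obstacle is the homogeneity claim $|\eta_j|\lesssim |h|^{\upsilon_j}$ for the second-kind coordinates; everything else is a routine translation-invariance argument. This claim is essentially a statement about the BCH formula: since $[\fg_\ell, \fg_{\ell'}]\subset \fg_{\ell+\ell'}$, the polynomial relating first- and second-kind exponential coordinates intertwines the dilations $D_r$, which forces each component polynomial $\eta_j$ to be $D_r$-homogeneous of the correct degree $\upsilon_j$.
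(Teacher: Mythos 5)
Your proof is correct and follows essentially the same route as the paper: decompose $h$ into a product of one-parameter exponentials $\exp(\eta_j X_j)$ along the adapted basis, telescope, apply the fundamental theorem of calculus along each one-parameter subgroup, and use the (bi-)invariance of Haar measure together with the bound $|\eta_j|\leq C|h|^{\upsilon_j}$. Your homogeneity argument for that last bound is the right one (the exponent is indeed $\upsilon_j$, not $1/\upsilon_j$ as a typo in the paper's proof suggests), so nothing further is needed.
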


In this paper, if $X\in \fg$, 
then we keep the same notation $X$ for the left invariant vector field while
 $\tilde X$ denotes the right invariant vector field, 
 that is, we have for any function $f\in C^\infty(G)$ and $x\in G$:
 $$
 Xf(x) = \frac{d}{ds}|_{s=0} f\left(x\exp_G(sX)\right)
 \quad\mbox{while}\quad 
 \tilde Xf(x) = \frac{d}{ds}|_{s=0} f\left(\exp_G(sX)x\right).
 $$
We adapt the argument of \cite[Mean Value Theorem 1.33]{FS}
and \cite[\S 3.1.8]{FR-monograph}.

\begin{proof}[Proof of Lemma \ref{lem_L1mv}]
Any $h\in G$ may be written as 
$$
h=h_1\ldots h_n
\quad\mbox{with}\quad 
h_\ell:= \exp (t_\ell X_\ell)
\quad\mbox{and}\quad 
|t_\ell|\leq C |h|^{1/\upsilon_\ell}.
$$
Therefore, we have
\begin{eqnarray*}
\|f-f( h\, \cdot)\|_{L^1(G)}
&\leq& 
\sum_{j=1}^n 
\int_G |f(h_j h_{j+1} \ldots h_n x)
-f(h_{j+1} \ldots h_n x)|dx
\\
&\leq& 
\sum_{j=1}^n 
\int_{G\times [0,t_j]}  
|\tilde X_j f(\exp (s X_j) h_{j+1} \ldots h_n x)|dx ds
\\
&&= 
\sum_{j=1}^n 
\int_{G\times [0,t_j]} 
 |\tilde X_j f(y)|dy ds
=
\sum_{j=1}^n 
|t_j| \int_G |\tilde X_j f(y)|dy. 
\end{eqnarray*}

This shows the right case and 
the left case is similar.
\end{proof}

\subsection{The dual of $G$ and the Plancherel theorem}
\label{subsec_Gh+plancherel}

Here we set some notation and recall some properties 
regarding the representations of the group $G$, 
especially the Plancherel theorem,
and its enveloping Lie algebra $\fU(\fg)$. 
The (very) general theory may be found in \cite{dixmier}, 
for a description more adapted to our particular context, 
see \cite[ch1]{FR-monograph}.
Note that we will not use the orbit method \cite{corwingreenleaf}.

In this paper,  we always assume that the representations of the group $G$ 
are strongly continuous and acting on separable Hilbert spaces.
For a unitary representation $\pi$ of $G$, 
we keep the same notation for the corresponding infinitesimal representation
which acts on the universal enveloping algebra $\fU(\fg)$ of the Lie algebra of the group.
It is characterised by its action on $\fg$:
\begin{equation}
\label{eq_def_pi(X)}
\pi(X)=\partial_{t=0}\pi(e^{tX}),
\quad  X\in \fg.
\end{equation}
The infinitesimal action acts on the space $\cH_\pi^\infty$
of smooth vectors, that is, the space of vectors $v\in \cH_\pi$ such that 
 the function $G\ni x\mapsto \pi(x)v\in \cH_\pi$ is of class $C^\infty$.

For a unitary representation $\pi$ and any $f\in L^1(G)$,
we define the operator
$$
\pi(f)
=\int_G f(x) \pi(x)^*dx.
$$ 
One checks easily
\begin{equation}
\label{eq_Fnorm_L1}
\|\pi(f) \|_{\sL(\cH_\pi)} \leq \|f\|_{L^1(G)}.
\end{equation}

We denote by $\Gh$ the set of classes of unitary irreducible representations
modulo unitary equivalence, see \cite{dixmier} or \cite{FR-monograph}.
It is a standard Borel space (i.e. a separable complete metrisable topological space equipped with the sigma-algebra generated by the open sets).

From now on, we may identify an unitary irreducible representation 
with its class in $\Gh$.
This leads to the notion of  \emph{group Fourier transform} for a function  $f\in L^1(G)$ at $\pi\in \Gh$.
$$
 \pi(f) \equiv \widehat f(\pi) \equiv \cF_G(f)(\pi).
$$

The Plancherel measure is the unique positive Borel standard measure $\mu$ on $\Gh$ such that 
for any $f\in C_c(G)$, we have
$$
\int_G |f(x)|^2 dx = \int_{\Gh} \|\cF_G(f)(\pi)\|_{HS(\cH_\pi)}^2 d\mu(\pi).
$$
Here $\|\cdot\|_{HS(\cH_\pi)}$ denotes the Hilbert-Schmidt norm on the space $HS(\cH_\pi) \sim \cH_\pi \otimes \cH_\pi^*$ of Hilbert-Schmidt operators on the Hilbert space  $\cH_\pi$.
This implies that the group Fourier transform extends unitarily from 
$L^1(G)\cap L^2(G)$ to $L^2(G)$ onto 
$$
L^2(\Gh):=\int_{\Gh} \cH_\pi \otimes\cH_\pi^* d\mu(\pi),
$$
which we identify with the space of $\mu$-square integrable fields on $\Gh$. The Plancherel formula may be rephrased as
\begin{equation}
\label{eq_plancherel_formula}
\|f\|_{L^2(G)} 
=
\|\widehat f\|_{L^2(\Gh)}. 
\end{equation}

The orbit method furnishes an expression for the Plancherel measure $\mu$,
see \cite[Section 4.3]{corwingreenleaf}.
However we will not need this here.

The general theory on locally compact unimodular group of type I applies 
\cite{dixmier}:
let $\sL(L^2(G))$ be the space of bounded linear operators on $L^2(G)$
and 
let  $ \sL_L(L^2(G))$ be the subspace of those operators $T\in \sL(L^2(G))$ which are left-invariant, that is, commute with the left translation:
$$
T(f(g\,\cdot))(g_1) =(Tf)(gg_1), \quad f\in L^2(G), \ g,g_1\in G.
$$ 
Then there exists a field of bounded operators $\widehat T (\pi)\in \sL(\cH_\pi)$, $\pi\in \Gh$, such that
$$
\forall f\in L^2(G)\quad
\cF_G(Tf)(\pi) = \widehat T(\pi) \ \widehat f(\pi)
\quad \mbox{for} \ \mu-\mbox{almost all}\ \pi\in \Gh.
$$
Moreover the operator norm of $T$ is equal to 
$$
\|T\|_{\sL(L^2(G))}=\sup_{\pi\in \Gh} \|\widehat T(\pi)\|_{\sL(\cH_\pi)}.
$$
The supremum here has to be understood as the essential supremum with respect to the Plancherel measure $\mu$.
By the Schwartz kernel theorem, 
any operator $T\in \sL_L(L^2(G))$ is a convolution operator 
and we denote by $T\delta_0\in \cS'(G)$ its convolution kernel:
$Tf = f * (T\delta_0)$, $f\in \cS(G)$.
One may extend the definition of the group Fourier transform to these distributions via 
$\cF_G\{T\delta_0\}=\widehat T(\pi)$.

Denoting by $L^\infty(\Gh)$ 
the space of fields of  operators $\sigma_\pi\in \sL(\cH_\pi)$, $\pi\in \Gh$, with 
$$
\|\sigma\|_{L^\infty(\Gh)}:=
\sup_{\pi\in \Gh} \|\sigma_\pi\|_{\sL(\cH_\pi)}<\infty,
$$
modulo equivalence under the Plancherel measure $\mu$, 
we have obtained that 
$T\in \sL(L^2(G))$ implies 
$\{\cF_G\{T\delta_0\}=\widehat T(\pi),\pi\in \Gh\}\in  L^\infty(\Gh)$.
Conversely, 
to any  field $\sigma=\{\sigma_\pi,\pi\in \Gh\}$ in $L^\infty(\Gh)$, 
we associate the \emph{Fourier multiplier operator} $T_\sigma$
via
\begin{equation}
\label{eq_F_mult}
\cF_G\{T_\sigma(\phi)\}(\pi)=\sigma_\pi \widehat \phi(\pi),
\quad \phi\in L^2(G).
\end{equation}
The Plancherel formula implies that $T_\sigma\in \sL_L(L^2(G))$
with operator norm bounded by $\|\sigma\|_{L^\infty(\Gh)}$.
As recalled above, the operator norm is in fact equal to the $L^\infty(\Gh)$-norm of $\sigma$.
Thus we have obtained the isometric isomorphism of von Neumann algebras
$$
\left\{\begin{array}{rcl}
L^\infty(\Gh) &\longrightarrow&  \sL_L(L^2(G))\\
\sigma &\longmapsto& T_\sigma
\end{array}\right.
$$
with inverse given via $\sigma= \cF_G \{T_\sigma\delta_0\}$.

\subsection{Rockland operators}
\label{subsec_rockland}

Here we recall the definition of Rockland operators
and their main properties.

\begin{definition}
A \emph{Rockland operator}
\index{Rockland!operator}
 $\cR$ on $G$ is 
a left-invariant differential operator on $G$
which is homogeneous of positive degree and
such that  
for each unitary irreducible non-trivial representation $\pi$ on $G$,
the operator $\pi(\cR)$ is injective on $\cH_\pi^\infty$, 
that is,
$$
 \forall v\in \cH_\pi^\infty\qquad
\pi(\cR)v = 0 \ \Longrightarrow \ v=0
.
$$
\end{definition}

Although the definition of a Rockland operator would make sense on a homogeneous Lie group (in the sense of  \cite{FS}), 
it turns out 
that the existence of a (differential) Rockland operator on a homogeneous group 
implies that the homogeneous group may be assumed to be graded
(cf.  \cite{miller,TElst+Robinson}, see also 
\cite[Proposition 4.1.3]{FR-monograph}).
This explains why we have chosen the setting of graded Lie groups for this paper.
Helffer and Nourrigat proved
\cite{helffer+nourrigat-79} the Rockland conjecture, 
that is, that Rockland operators are all the hypoelliptic 
left-invariant differential operators on a given graded Lie group.
Hence Rockland operators may be viewed as 
analogues of elliptic operators or more generally hypoelliptic operator (with any degree of homogeneity)
in a non-abelian  context.

Some authors may have different conventions than ours regarding Rockland operators: for instance some choose to consider right-invariant operators
and some consider operators which are not necessarily homogeneous. 
However, the choice of conventions does not interfere with the study of the objects in themselves.

\begin{ex}
In the stratified case, one can check easily that
any (left-invariant negative)  \emph{sub-Laplacian}, 
that is
\begin{equation}
\label{eq_def_lih_subLapl}
\cL=Z_1^2+\ldots+Z_{n'}^2
\quad\mbox{with} \ Z_1,\ldots,Z_{n'} \
\mbox{forming any basis of the first stratum} \ \fg_1,
\end{equation}
 is a Rockland operator.
\end{ex}

\begin{ex}
On any graded group $G$, it is not difficult to see that 
 the operator
\begin{equation}
\label{eq_cR_ex}
\sum_{j=1}^n
(-1)^{\frac{\nu_o}{\upsilon_j}}
 c_j X_j^{2\frac{\nu_o}{\upsilon_j}}
\quad\mbox{with}\quad c_j>0
,
\end{equation}
is a Rockland operator of homogeneous degree $2\nu_o$
if $\nu_o$ is any common multiple of $\upsilon_1,\ldots, \upsilon_n$.
\end{ex}

Hence Rockland operators do exist on any graded Lie group 
(not necessarily stratified).

If the Rockland operator $\cR$  is formally self-adjoint,
that is, $\cR^*=\cR$
as elements of the universal enveloping algebra $\fU(\fg)$,
then $\cR$ and $\pi(\cR)$ admit self-adjoint extensions
on $L^2(G)$ and $\cH_\pi$ respectively, see [ch.3.B]\cite{FS} or 
\cite[\S 4.1.3]{FR-monograph}.
We keep the same notation for their self-adjoint extensions.
We denote by    $E$ and $E_\pi$ their spectral measure:
$$
\cR = \int_\bR \lambda dE (\lambda)
\quad\mbox{and}\quad
\pi(\cR) = \int_\bR \lambda dE_\pi(\lambda).
$$

We will be interested in the positive Rockland operators:
$$
\forall f \in \cS(G)\qquad
\int_G \cR f(x) \ \overline{f(x)} dx\geq 0.
$$
They are formally self-adjoint.
One checks easily that the operator  in \eqref{eq_cR_ex} is positive.
This shows that positive Rockland operators always exist 
on any graded Lie group.
Note that if $G$ is stratified and $\cL$ is a (left-invariant negative) sub-Laplacian as in \eqref{eq_def_lih_subLapl}, 
then it is customary to privilege   $-\cL$ as a positive Rockland operator.

The point 0 in the spectrum of a positive Rockland operator is negligible with respect to the spectral measure, 
see \cite{hula+ludwig} or \cite[Remark 4.2.8.4]{FR-monograph}.
Consequently, one can define multipliers operators of $\cR$ on $(0,+\infty)$, 
the value of this multiplier function at 0 being negligible. 
The properties of the functional calculus of $\cR$ and of the group 
Fourier transform imply

\begin{lemma}
\label{lem_homogeneity_multipliers}
Let $\cR$ be a positive Rockland operator of homogeneous degree $\nu$
and  $f:\bR^+\to \bC$ be a measurable function.
We assume that the domain of the operator $ f(\cR)= \int_\bR f(\lambda) dE (\lambda) $ 
contains $\cS(G)$.
Then for any $\phi\in \cS(G)$, 
$$
 \left( f(r^\nu \cR) \phi\right)\circ D_r =f(\cR) \left(\phi \circ D_r\right),
$$
where $\nu$ denotes the homogeneous degree of $\cR$,
and in the sense of distribution
\begin{equation}
\label{eq_homogeneity_kernel_multipliers}
 f(r^\nu \cR) \delta_0 (x) =r^{-Q} f(\cR)\delta_0(r^{-1}x),
 \quad x\in G, 
\end{equation}
where  $f(\cR)\delta_0$ denotes the right convolution kernel of $f(\cR)$.
\end{lemma}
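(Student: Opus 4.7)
The plan is to deduce both identities from a natural unitary intertwining of $\cR$ with the dilation action on $L^2(G)$, combined with the Borel functional calculus for self-adjoint operators.

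First I would introduce the dilation operator $U_r\phi := r^{Q/2}(\phi\circ D_r)$ on $L^2(G)$. Using the homogeneity of the Haar measure one checks that $U_r$ is unitary. Since $\cR$ is left-invariant and homogeneous of degree $\nu$, the elementary scaling relation $\cR(\phi\circ D_r) = r^\nu(\cR\phi)\circ D_r$ on $\cS(G)$ translates into
$$
U_r^{-1}\cR U_r = r^\nu\cR
$$
on the Schwartz core; by the essential self-adjointness of a positive Rockland operator on $\cS(G)$ recalled in Section \ref{subsec_rockland}, this extends to an identity of unbounded self-adjoint operators on $L^2(G)$. The next step invokes the standard principle that if $U$ is unitary and $A$ self-adjoint then $U^{-1}f(A)U = f(U^{-1}AU)$ for every Borel function $f$; applied here this gives $U_r^{-1}f(\cR)U_r = f(r^\nu\cR)$. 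Unwinding the definition of $U_r$ yields the first displayed identity $(f(r^\nu\cR)\phi)\circ D_r = f(\cR)(\phi\circ D_r)$. Lemma \ref{lem_normL2_E(epsilon,infty)} ensures that the (possibly arbitrary) value of $f$ at $0$ plays no role, so that it is enough to consider $f$ defined on $(0,\infty)$.

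For the kernel identity, write $K := f(\cR)\delta_0$ and $K_r := f(r^\nu\cR)\delta_0$ for the respective right convolution kernels, so that $f(\cR)\phi = \phi * K$ and $f(r^\nu\cR)\phi = \phi * K_r$ for $\phi\in\cS(G)$. Evaluating the first identity at a point $x$ gives
$$
\int_G \phi(y)\, K_r(y^{-1}(r\cdot x))\, dy = \int_G \phi(r\cdot y)\, K(y^{-1}x)\, dy.
$$
I would perform the substitution $z = r\cdot y$ in the right-hand integral, using that dilations are group automorphisms so that $(r^{-1}\cdot z)^{-1} = r^{-1}\cdot z^{-1}$, together with the Jacobian $dy = r^{-Q}dz$; the right-hand side becomes $r^{-Q}\int\phi(z)\,K((r^{-1}\cdot z^{-1})x)\,dz$. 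Comparing the two integrands, since $\phi\in\cS(G)$ is arbitrary, and reparameterising $x \to r\cdot x$, identifies $K_r(x) = r^{-Q}K(r^{-1}\cdot x)$, which is the claimed formula \eqref{eq_homogeneity_kernel_multipliers}.

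The main obstacle I anticipate is not the functional calculus step, which is entirely standard once the intertwining is in place, but rather (i) justifying rigorously the extension of $U_r^{-1}\cR U_r = r^\nu\cR$ from the Schwartz core to the self-adjoint extensions (which relies on $\cS(G)$ being a core for $\cR$, a fact one must quote from the Rockland operator theory), and (ii) the careful bookkeeping of the non-abelian change of variables, where the distinction between left and right translations and the identity $(r^{-1}\cdot z)^{-1} = r^{-1}\cdot z^{-1}$ are easy to mishandle. A minor additional point is to note that $f(\cR)\delta_0$ is a well-defined tempered distribution because $f(\cR)$ is a left-invariant operator defined on $\cS(G)$, so the Schwartz kernel theorem supplies a unique right convolution kernel to which the argument applies.
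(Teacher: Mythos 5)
Your proof is correct. The paper in fact gives no proof of this lemma --- it is asserted to follow from ``the properties of the functional calculus of $\cR$ and of the group Fourier transform'' --- and your argument via the unitary dilation $U_r\phi=r^{Q/2}\phi\circ D_r$, the intertwining $U_r^{-1}\cR U_r=r^\nu\cR$ (which extends from the Schwartz core precisely because the paper records that a positive Rockland operator has a \emph{unique} self-adjoint extension, i.e.\ is essentially self-adjoint on $\cS(G)$), and the non-abelian change of variables for the kernel is exactly the kind of functional-calculus argument the authors have in mind, carried out correctly.
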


Let us recall Hulanicki's Theorem, see \cite{hula} or \cite[\S4.5]{FR-monograph}.
\begin{theorem}[Hulanicki]
\label{thm_hula}
Let $|\cdot|$ be a quasi-norm on $G$, $s\geq0$, $p\in [1,\infty)$,
 and $\alpha\in \bN_0^n$.
Then there exists $C>0$ and $d\in \bN$
such that
for any  $f\in C^d(0,\infty)$,
we have
$$
\int_G (1+|x|)^s |X^\alpha f(\cR)\delta_0(x)|^p dx \leq C
\sup_{\lambda>0, \ell=0,\ldots d}
(1+\lambda)^d |f^{(\ell)}(\lambda)|,
$$
 provided that the supremum on the right-hand side is finite.
 
 The same results with the right-invariant vector fields  $\tilde X_j$'s instead of the left-invariant vector fields $X_j$'s hold.
 
Consequently, if $f$ is a Schwartz function, 
that is, $f\in \cS(\bR)$
 (for instance in $\cD(\bR)$), then $f(\cR)\delta_0\in \cS(G)$. 
\end{theorem}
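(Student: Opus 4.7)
The proof follows Hulanicki's classical strategy \cite{hula}: a spectral dyadic decomposition reduces the estimate to a model case for compactly supported multipliers, which is then handled using the fact that the heat kernel $h_1=e^{-\cR}\delta_0$ lies in $\cS(G)$ (a consequence of the hypoellipticity of $\cR$, established in \cite{FS}).

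Fix $\eta\in\cD(0,\infty)$ with $\supp\eta\subset[1/2,2]$ and $\sum_{j\in\bZ}\eta(2^{-j\nu}\lambda)=1$ on $(0,\infty)$, and decompose $f=f_{\text{low}}+\sum_{j\geq 1}f_j$, with $f_j(\lambda):=f(\lambda)\eta(2^{-j\nu}\lambda)$ supported in the dyadic annulus $\{\lambda\sim 2^{j\nu}\}$ and $f_{\text{low}}$ collecting the $j\leq 0$ pieces (hence supported in $(0,2]$). By the homogeneity identity \eqref{eq_homogeneity_kernel_multipliers} of Lemma \ref{lem_homogeneity_multipliers}, the kernel of $f_j(\cR)$ is a $2^j$-dilate of the kernel of $g_j(\cR)$ where $g_j(\lambda):=f(2^{j\nu}\lambda)\eta(\lambda)$ is supported in the \emph{fixed} compact set $[1/2,2]$, independently of $j$. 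This reduces the theorem to the model estimate: for any $g\in C^{d_0}(\bR)$ supported in $[1/2,2]$,
$$\|(1+|x|)^s X^\alpha g(\cR)\delta_0\|_{L^p(G)}\leq C\,\|g\|_{C^{d_0}},$$
with $C,d_0$ depending only on $G,s,p,\alpha$.

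To prove the model estimate I would factor $g(\lambda)=\tilde g(\lambda)e^{-\lambda}$ with $\tilde g(\lambda):=g(\lambda)e^\lambda$, still smooth and compactly supported in $[1/2,2]$ and satisfying $\|\tilde g\|_{C^{d_0}}\asymp\|g\|_{C^{d_0}}$. Then $g(\cR)\delta_0=\tilde g(\cR)h_1$ with $h_1\in\cS(G)$, and since $X^\alpha$ is left-invariant it commutes with $\tilde g(\cR)$, giving $X^\alpha g(\cR)\delta_0=\tilde g(\cR)(X^\alpha h_1)$ with $X^\alpha h_1\in\cS(G)$. The polynomial weight $(1+|x|)^s$ is brought past $\tilde g(\cR)$ through a finite iteration of commutators $[x_j,\tilde g(\cR)]$: using the group law $(yz)_j-y_j=z_j+P_j(y',z')$, with $P_j$ a polynomial in strictly lower-weight coordinates, each such commutator produces lower-order terms expressible via derivatives of $\tilde g$ and right-invariant vector fields acting on $h_1$, every intermediate factor remaining either bounded on $L^2(G)$ (by the spectral theorem applied to $\tilde g$ and its derivatives, all of which are bounded) or Schwartz (inside $h_1$). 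The passage from the resulting weighted $L^2$ bound to the weighted $L^p$ bound is then obtained from Corollary \ref{cor_prop_polar_coord} (converting weighted $L^2$ to $L^1$), combined with interpolation against the pointwise Schwartz decay of the kernel.

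Summing the dyadic pieces, the rescaling yields $\|(1+|x|)^s X^\alpha f_j(\cR)\delta_0\|_{L^p(G)}\leq C\,2^{j\theta}\,\|g_j\|_{C^{d_0}}$ for some explicit $\theta=\theta(s,p,\alpha,G)$, while the hypothesis gives $\|g_j\|_{C^{d_0}}\leq C(1+2^{j\nu})^{-(d-d_0)}\sup_{\lambda,\ell}(1+\lambda)^d|f^{(\ell)}(\lambda)|$ for $j\geq 1$; choosing $d$ sufficiently large makes $\sum_{j\geq 1}$ convergent. The low-frequency part $f_{\text{low}}$, bounded and smooth on $(0,2]$ (with modification of the value at $0$ being irrelevant by Lemma \ref{lem_normL2_E(epsilon,infty)}), is handled directly by the same model argument. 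The right-invariant case follows from the change of variable $x\mapsto x^{-1}$, which interchanges $X_j$ and $\tilde X_j$ while preserving Schwartz regularity, and the Schwartz consequence for $f\in\cS(\bR)$ is immediate since $\sup(1+\lambda)^d|f^{(\ell)}|<\infty$ for all $d,\ell$. The main obstacle is the commutator analysis in the model estimate: while the factorisation $g(\cR)=\tilde g(\cR)\circ e^{-\cR}$ is elementary, tracking how multiplication by $x_j$ interacts with the functional calculus $\tilde g(\cR)$ requires careful use of the graded group law and of the Schwartz regularity of $h_1$, and is where the specific nilpotent structure of $G$ genuinely enters.
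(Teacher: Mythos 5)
The paper does not prove this statement: it is quoted verbatim as Hulanicki's theorem and attributed to \cite{hula}, so there is no internal proof to compare against. Judged on its own terms, your sketch gets the outer architecture right (dyadic spectral decomposition, rescaling by homogeneity to a model multiplier supported in $[1/2,2]$, factoring out the heat kernel $h_1=e^{-\cR}\delta_0\in\cS(G)$, summing with a large choice of $d$), but the step you yourself flag as ``the main obstacle'' is precisely where the proof is missing rather than merely delicate. The commutator $[x_j,\tilde g(\cR)]$ cannot be ``expressed via derivatives of $\tilde g$'' for a general Rockland operator: unlike the Euclidean Laplacian, there is no symbolic calculus relating multiplication by a coordinate to differentiation of the spectral multiplier. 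Unwinding the kernel identity, $[x_j,\tilde g(\cR)]$ has convolution kernel $x_j\,\tilde g(\cR)\delta_0$ plus lower-weight corrections of the same form, so your recursion asks for a bound on $\Delta_{x_j}\tilde g(\cR)$ in order to prove a bound on $\Delta_{x_j}\tilde g(\cR)$ --- it is circular. The ``interpolation against the pointwise Schwartz decay of the kernel'' in the passage from weighted $L^2$ to weighted $L^p$ has the same problem: Schwartz decay of $g(\cR)\delta_0$ is the conclusion, not an available input.

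Hulanicki's actual argument circumvents commutators entirely. For $g$ supported in a fixed compact subset of $(0,\infty)$ one writes $g(\lambda)=G(e^{-\lambda})$ with $G$ continuous on a compact interval and vanishing near the endpoint corresponding to $\lambda=0$, and approximates $G$ by polynomials. Then $g(\cR)\delta_0$ is approximated by linear combinations of convolution powers $h_1^{*k}$, each of which is Schwartz with explicitly controlled weighted norms, and the approximation error is converted into a weighted $L^2$ (hence, by your Corollary \ref{cor_prop_polar_coord}, weighted $L^1$) estimate using the spectral theorem together with the fact that $e^{-\cR}$ maps $L^2$ into a weighted space. It is this polynomial-approximation device, not a commutator expansion, that makes the constant depend only on finitely many derivatives of $f$. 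If you want a complete proof you should either reproduce that argument or simply cite \cite{hula} as the paper does.
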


We will also use the fact that any two positive Rockland operators are equivalent in the following sense 
(see \cite{FR-sob} or \cite[\S4.4.5, especially Corollary 4.4.21]{FR-monograph}):
\begin{proposition}
\label{prop_powercR1cR2}
\begin{itemize}
\item If $\cR$ is a positive Rockland operator, 
then for any $s\geq 0$ the powers $\cR^s$ 
defined by spectral calculus
are (unbounded) operators on $L^2(G)$ with domains containing $\cS(G)$.
\item 
Let $\cR_1$ and $\cR_2$ be two positive Rockland operators 
of homogeneous degrees $\nu_1$ and $\nu_2$ respectively.
Then for any $s \geq 0 $ we have
$$
\exists C>0\qquad
\forall \phi\in \cS(G)\qquad
\|\cR_1^{s/\nu_1}  \phi \|_{L^2(G)}
\leq C\|\cR_2^{s/\nu_2}  \phi \|_{L^2(G)}.
$$
\end{itemize}

\end{proposition}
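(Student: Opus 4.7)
My plan is to reduce both items to spectral estimates: item~(1) via the spectral theorem for $\cR$, item~(2) via the Plancherel identification of Subsection \ref{subsec_Gh+plancherel}, and then to exploit the scaling of Lemma \ref{lem_homogeneity_multipliers} together with Hulanicki's Theorem \ref{thm_hula}.

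For item~(1), by the spectral theorem $\phi\in\dom(\cR^{s})$ if and only if $\int_{0}^{\infty}\lambda^{2s}\,d\|E(\lambda)\phi\|_{L^{2}(G)}^{2}<\infty$. When $s\geq 0$ I split this integral at $\lambda=1$: the portion over $(0,1]$ is bounded by $\|\phi\|_{L^{2}(G)}^{2}$ since $\lambda^{2s}\leq 1$ there, while the portion over $[1,\infty)$ is bounded by $\|\cR^{\lceil s\rceil}\phi\|_{L^{2}(G)}^{2}$, which is finite because $\cR^{\lceil s\rceil}$ is a left-invariant differential operator and therefore preserves $\cS(G)$. For $s<0$ the delicate task is integrability near $\lambda=0$: I would use a smooth dyadic decomposition $\sum_{j\geq 0}\eta(2^{j\nu}\lambda)^{2}\asymp 1$ on $(0,1]$ with $\eta\in\cD(1/2,2)$, reducing matters to summing $2^{-2js\nu}\|\eta(2^{j\nu}\cR)\phi\|_{L^{2}(G)}^{2}$. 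Hulanicki's theorem identifies the convolution kernel of $\eta(\cR)$ as a Schwartz function, Lemma \ref{lem_homogeneity_multipliers} expresses the kernel of $\eta(2^{j\nu}\cR)$ as its $D_{2^{-j}}$-dilate, and Young-type estimates combined with iterated factorisations $\eta(\lambda)=\lambda^{m}\tilde\eta_{m}(\lambda)$ (pushing powers of $\cR$ onto $\phi\in\cS(G)$ via $\cR^{m}\phi\in\cS(G)$) should produce enough polynomial decay in $j$ to make the weighted dyadic sum converge.

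For item~(2), the Plancherel identification reduces the claim to the $L^{2}(G)$-boundedness of the left-invariant operator $A_{s}:=\cR_{1}^{s/\nu_{1}}\cR_{2}^{-s/\nu_{2}}$. I would apply Stein's analytic interpolation to the family $A_{z}:=\cR_{1}^{z/\nu_{1}}\cR_{2}^{-z/\nu_{2}}$ on a strip $0\leq\Re z\leq N$ with $N$ a large common multiple of $\nu_{1}$ and $\nu_{2}$: on the line $\Re z=0$, $A_{iy}$ is a product of unitary operators (purely imaginary powers of positive self-adjoint operators), so $\|A_{iy}\|_{\sL(L^{2}(G))}=1$ with at most polynomial growth in $|\Im z|$; on the line $\Re z=N$, boundedness of $A_{N}$ reduces via item~(1) to the comparison $\|\cR_{1}^{N/\nu_{1}}\phi\|_{L^{2}(G)}\leq C\|\cR_{2}^{N/\nu_{2}}\phi\|_{L^{2}(G)}$ between two hypoelliptic left-invariant operators of the same homogeneous degree, which follows from Rockland's hypoellipticity criterion. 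Stein's interpolation then yields uniform bounds on each vertical line $\Re z=s$ with $0\leq s\leq N$, and the symmetry $(s,\cR_{1},\cR_{2})\mapsto(-s,\cR_{2},\cR_{1})$ extends the conclusion to all $s\in\bR$. The principal obstacle is the $s<0$ case of item~(1), where the spectral weight $\lambda^{2s}$ blows up at the origin and one must carefully harvest decay from the Schwartz regularity of $\phi$ against the scaling of the kernel provided by Hulanicki's theorem and Lemma \ref{lem_homogeneity_multipliers}.
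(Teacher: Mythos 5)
The paper does not actually prove this proposition: it is quoted from \cite{FR-sob}, so your attempt can only be measured against the argument there. Your treatment of the first item for $s\geq 0$ is correct, and your strategy for the second item (unitarity of the imaginary powers on the line $\Re z=0$, the integer endpoint $\Re z=N$ via the Helffer--Nourrigat subelliptic estimate, Stein interpolation in between, and duality/symmetry for negative $s$) is essentially the route taken in \cite{FR-sob}; to make the endpoint honest you should record that the inhomogeneous estimate $\|X^\alpha u\|_2\le C(\|\cR_2^{N/\nu_2}u\|_2+\|u\|_2)$ for $[\alpha]=N$ self-improves to the homogeneous one $\|X^\alpha u\|_2\le C\|\cR_2^{N/\nu_2}u\|_2$ by applying it to $u\circ D_r$ and letting $r\to\infty$, since $\cR_1^{N/\nu_1}$ is a combination of such $X^\alpha$.

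The genuine gap is exactly where you flagged it: item (1) for $s<0$, and it cannot be closed as proposed. The factorisation $\eta(\lambda)=\lambda^m\tilde\eta_m(\lambda)$ goes the wrong way in the low-frequency regime: it gives $\eta(2^{j\nu}\cR)=2^{j\nu m}\cR^m\tilde\eta_m(2^{j\nu}\cR)$, i.e.\ a factor that \emph{grows} like $2^{j\nu m}$ as $j\to+\infty$, so pushing powers of $\cR$ onto $\phi$ loses rather than gains. The only decay available for a general Schwartz $\phi$ is $\|\eta(2^{j\nu}\cR)\phi\|_2\le\|\phi\|_{L^1}\|\eta(2^{j\nu}\cR)\delta_0\|_2\lesssim 2^{-jQ/2}$, by \eqref{eq_homogeneity_kernel_multipliers} and Hulanicki's theorem, and this is sharp: already for $G=\bR^n$, $\cR=-\Delta$ and $\phi\in\cS(\bR^n)$ with $\widehat\phi(0)\neq0$ one has $\|E(0,\epsilon)\phi\|_2^2\asymp\epsilon^{n/2}$, so $\phi\notin\dom((-\Delta)^s)$ once $s\le -n/4=-Q/(2\nu)$. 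Hence your dyadic sum converges only for $-Q/(2\nu)<s<0$, and no argument can do better because the first bullet, read literally, fails for $s\le -Q/(2\nu)$; it must be restricted (to $\Re s\ge0$, or to $s>-Q/(2\nu)$, or to a different realisation of the negative powers). This defect propagates into your item (2): the analytic family $A_z\phi=\cR_1^{z/\nu_1}\cR_2^{-z/\nu_2}\phi$ is not defined on $\cS(G)$ for $\Re z$ large, so the interpolation as written is circular. You should instead run it on the dense subspace $\bigcup_{\epsilon>0}E_2[\epsilon,\infty)\bigl(L^2(G)\cap\dom(\cR_2^{N/\nu_2})\bigr)$, which is dense by Lemma \ref{lem_normL2_E(epsilon,infty)} and on which every power $\cR_2^{-z/\nu_2}$ with $0\le\Re z\le N$ acts boundedly; the resulting inequality $\|\cR_1^{s/\nu_1}\psi\|_2\le C\|\cR_2^{s/\nu_2}\psi\|_2$ then extends by density, and exchanging $(\cR_1,\cR_2)$ covers $s<0$ in the only sense in which that case is meaningful.
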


Note that Proposition \ref{prop_powercR1cR2} implies that 
if the hypothesis in \eqref{eq_hyp_thm_intro} 
of Theorem \ref{thm_intro}
is satisfied for one positive Rockland operator 
then it is satisfied for all.

\subsection{Difference operators}
The difference operators are aimed at replacing the derivatives with respect to the Fourier variable in the Euclidean case.

If $q$ is a continuous function on $G$, 
we define $\Delta_q$ via
$$
\Delta_q \widehat f (\pi) = \cF_G(q f) (\pi),
\quad \pi\in \Gh,
$$
for any function $f\in \cD(G)$.
As the group Fourier transform is injective
and since $\cD(G)$ is dense in $L^p(G)$, $p\in [1,\infty)$,
this defines the difference operator $\Delta_q$
as a (possibly) unbounded operator 
with domains in $L^2(\Gh)$ or $\cF L^1(G)$ and values in $L^\infty(\Gh)$.
In particular, for  $\alpha\in \bN_0^n$, 
we set
$$
\Delta^\alpha:=\Delta_{x^\alpha}.
$$

\begin{remark}
\label{rem_def_Delta}
Assuming $q$ to be a continuous function with polynomial growth, one can define difference operators on $\cS(G)$.
Moreover, 
assuming further hypotheses on $q$, 
difference operators may be defined on the image of the group Fourier transform of more general distributional spaces on $G$
where $\cD(G)$ is not necessarily dense for instance on $\cF_G^{-1} L^\infty(\Gh)$.
In fact, in \cite[Section 5.2.1]{FR-monograph}, difference operators are defined in a slightly more general context.
\end{remark}

The difference operators as defined above  were described concretely in the case of the Heisenberg groups
\cite[Section 6.3]{FR-monograph}, and 
one checks easily that they coincide with the difference-differentials operators of \cite{dM+M}, 
see also \cite{geller1980} and \cite{bahouri+chemin+danchin2018,bahouri+chemin+danchin2019}.
In the case of compact Lie groups, an intrinsic notion of difference operators can be defined even on symbols that are not Fourier transforms of distributions, see \cite{fischerJFA2015,fischerJFA2020}.
On a general Lie group (even restricting oneself to the nilpotent class), 
to the authors' knowledge at the time of writing, there is not a more intrinsic way to define difference operators than the one above.

\smallskip

The difference operators satisfy the Leibniz rule
 \cite[\S5.2.2]{FR-monograph}:
\begin{equation}
\label{eq_Leibniz_rule_sigma}
\Delta^\alpha (\sigma_1\sigma_2)=
\sum_{[\alpha_1]+[\alpha_2]= [\alpha]} 
c_{\alpha_1,\alpha_2}\Delta^{\alpha_1}\sigma_1 \ 
\Delta^{\alpha_2}\sigma_2,
\end{equation}
where $c_{\alpha_1,\alpha_2}$ are universal constants.
By `universal constants', 
we mean that they depend only on $G$ and the choice of the basis $\{X_j\}_{j=1}^n$.

\section{The Sobolev spaces on $\Gh$}
\label{sec_HsGh}

The aim of this section is to study Sobolev-type spaces on $\Gh$
defined in the following way:

\begin{definition}
\label{def_HshatG}
For each $s \geq0$,
we define $H^s(\Gh)$ 
as the space of measurable fields $\sigma=\{\sigma(\pi)\}$
such that $\sigma\in L^2(\Gh)$ and $\Delta_{(1+|\cdot|)^{s}}\sigma \in L^2(\Gh)$ 
where $|\cdot|$ is a quasi-norm on $G$.
 \end{definition}

This means that $H^s(\Gh)$ is the image via the group Fourier transform of the subspace 
$L^2(G, (1+|\cdot|)^{2s})$ of $L^2(G)$:
$$
H^s(\Gh) = \cF_G \left(L^2(G, (1+|\cdot|)^{2s})\right).
$$
We will call the spaces $H^s(\Gh)$ the Sobolev spaces on $\Gh$.
This vocabulary is justified by the properties stated and proved in this section.
We start by showing that the Sobolev spaces on $\Gh$ are Hilbert spaces
independent of the quasi-norm:
\begin{proposition}
\label{prop_HsGh_hilbert}
Let $s \geq0$.
\begin{enumerate}
\item
\label{item_prop_HsGh_hilbert_indep_norm}
 The space $H^s(\Gh)$ is independent of the quasi-norm $|\cdot|$.
\item 
\label{item_prop_HsGh_hilbert_equiv}
Let  $\sigma=\{\sigma(\pi),\pi\in \Gh\}$ be
a $\mu$-measurable field of operators and let $s\geq 0$.
The following conditions are equivalent:
\begin{itemize}
\item $\sigma\in H^s(\Gh)$,
\item there exists a quasi-norm $|\cdot|'$ such that 
$\cF_G^{-1} \sigma\in L^2(G, (1+|\cdot|')^{2s})$,
\item 
$\cF_G^{-1} \sigma\in L^2(G, (1+|\cdot|')^{2s})$
for any quasi-norm $|\cdot|'$,
\item 
$\cF_G^{-1} \sigma\in L^2(G,\omega_{s}^2)$
for any continuous function $\omega_s:G\to (0,\infty)$ 
equivalent to $(1+|\cdot|)^s$ in the sense that
\begin{equation}
\label{eq_omega_s}
\exists C>0 \quad \forall x\in G\qquad
 C^{-1} (1+|x|)^s
\leq \omega_s(x)\leq C (1+|x|)^s,
\end{equation}
for one (and then all) quasi-norm $|\cdot|$.
\end{itemize}
\item
\label{item_prop_HsGh_hilbert_Hilbertspace}
Fixing a weight $\omega_s$ satisfying \eqref{eq_omega_s},
the space $H^s(\Gh)$ is a Hilbert space when equipped 
with the sesquilinear form given via
\begin{eqnarray*}
(\sigma_1,\sigma_2)_{H^s} 
&:=&
(\Delta_{\omega_s} \sigma_1,\Delta_{\omega_s} \sigma_2)_{L^2(\Gh)} 
\\
&=&
\int_{\Gh} \tr\left( \Delta_{\omega_s} \sigma_1(\pi) \  
\Delta_{\omega_s} \sigma_2(\pi)^*\right)
d\mu(\pi).
\end{eqnarray*}
The corresponding norm is given by
$$
\|\sigma\|_{H^s,\omega_s}:=
\|\Delta_{\omega_s} \sigma\|_{L^2(\Gh)}
=
\| \omega_s \cF_G^{-1} \sigma\|_{L^2(G)}.
$$
Any two weights $\omega^{(1)}_s$ and $\omega^{(2)}_s$
 satisfying \eqref{eq_omega_s}
yield equivalent norms on $H^s(\Gh)$.
\end{enumerate}
\end{proposition}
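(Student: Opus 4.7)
The plan is to reduce everything to the fact that, via the Plancherel theorem, $H^s(\Gh)$ is the Fourier image of a weighted $L^2$-space on $G$, and then to exploit the equivalence of quasi-norms (Proposition \ref{prop_homogeneous_quasi_norm}) together with the elementary Hilbert space structure of weighted $L^2$-spaces.

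The first step, which underlies the whole proof, is the identity
\[
\Delta_{\omega_s}\sigma \;=\; \cF_G(\omega_s f),\qquad f:=\cF_G^{-1}\sigma,
\]
valid whenever $\omega_s f\in L^2(G)$ (coming directly from the definition of $\Delta_q$ on a dense subspace and then extended by the injectivity and unitarity of $\cF_G$). Combined with the Plancherel formula \eqref{eq_plancherel_formula} this gives
\[
\|\Delta_{\omega_s}\sigma\|_{L^2(\Gh)} \;=\; \|\omega_s f\|_{L^2(G)}.
\]
In particular, $\sigma\in H^s(\Gh)$ if and only if $f=\cF_G^{-1}\sigma \in L^2(G,(1+|\cdot|)^{2s})$, which already gives the identification $H^s(\Gh)=\cF_G(L^2(G,(1+|\cdot|)^{2s}))$ stated just after the definition.

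For part \eqref{item_prop_HsGh_hilbert_indep_norm}, I would invoke Proposition \ref{prop_homogeneous_quasi_norm}(2): any two homogeneous quasi-norms $|\cdot|_1,|\cdot|_2$ are equivalent, hence so are the weights $(1+|\cdot|_1)^s$ and $(1+|\cdot|_2)^s$ (up to a multiplicative constant depending only on $s$). This yields equality of the weighted $L^2$-spaces and therefore of the corresponding $H^s(\Gh)$. For part \eqref{item_prop_HsGh_hilbert_equiv}, each of the stated conditions is, by the observation above and by \eqref{eq_omega_s}, equivalent to membership of $f$ in a weighted $L^2$-space whose weight is pointwise comparable to $(1+|\cdot|)^{2s}$; these spaces all coincide (with equivalent norms), which gives the four-fold equivalence.

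For part \eqref{item_prop_HsGh_hilbert_Hilbertspace}, I would fix $\omega_s$ satisfying \eqref{eq_omega_s} and observe that $L^2(G,\omega_s^2)$ is a Hilbert space in the usual way, with inner product $\int_G f_1\overline{f_2}\,\omega_s^2\,dx$. Transporting this structure through $\cF_G$ and using the identity displayed above, the form $(\sigma_1,\sigma_2)_{H^s}=(\Delta_{\omega_s}\sigma_1,\Delta_{\omega_s}\sigma_2)_{L^2(\Gh)}$ is seen to be a sesquilinear, positive definite, and complete inner product on $H^s(\Gh)$; its polarisation into a trace integral follows from the standard expression of the $L^2(\Gh)$-inner product in terms of Hilbert–Schmidt pairings. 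The equivalence of the norms associated with two weights $\omega_s^{(1)},\omega_s^{(2)}$ both satisfying \eqref{eq_omega_s} is immediate by comparing them pointwise.

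The proof is essentially a bookkeeping exercise. The only subtlety, and thus the main (mild) obstacle, is the initial extension of the identity $\Delta_{\omega_s}\sigma=\cF_G(\omega_s f)$ from $f\in \cD(G)$ to general $f\in L^2(G,\omega_s^2)$: one needs that $\omega_s$ is continuous and that $\cD(G)$ is dense in $L^2(G,\omega_s^2)$, together with the closedness of $\Delta_{\omega_s}$ as an operator on $L^2(\Gh)$, in order to pass to the limit. Once this is in place the rest is straightforward.
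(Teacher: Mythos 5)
Your proposal is correct and follows essentially the same route as the paper: reduce to the identification $H^s(\Gh)=\cF_G\big(L^2(G,(1+|\cdot|)^{2s})\big)$, use the equivalence of quasi-norms (Proposition \ref{prop_homogeneous_quasi_norm}) to identify the weighted $L^2$-spaces, and transport the Hilbert space structure through the Plancherel isometry. The extension issue you flag for $\Delta_{\omega_s}$ is a reasonable point of care, but it is already absorbed in the paper's remark immediately after Definition \ref{def_HshatG}, so no further argument is needed.
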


 \begin{proof}[Proof of Proposition \ref{prop_HsGh_hilbert}]
For any $\omega_s$  satisfying \eqref{eq_omega_s},
 and any quasi-norm $|\cdot|$,
we have
$L^2(G, (1+|\cdot|)^s) = L^2(G, \omega_s)$.
If $|\cdot|'$ is another quasi-norm,
$ (1+|\cdot|')^s)$ is a continuous function satisfying \eqref{eq_omega_s}
since  two quasi-norms are equivalent
by Proposition \ref{prop_homogeneous_quasi_norm}.
This together with the isometry $\cF_G:L^2(G)\to L^2(\Gh)$ 
between Hilbert spaces
imply the statement.
\end{proof}

We may allow ourselves to denote the $H^s(\Gh)$-norm by 
$$
\|\sigma\|_{H^s}:=
\|\sigma\|_{H^s,\omega_s},
$$
when a function $\omega_s$ has been fixed.

The space $H^s(\Gh)$ is stable by taking the adjoint
as one checks easily the following property: 
if $\sigma=\{\sigma(\pi),\pi\in \Gh\}$ is in $H^s(\Gh)$ 
then 
$\sigma^*=\{\sigma(\pi)^*,\pi\in \Gh\}$ is also in $H^s(\Gh)$
and 
\begin{equation}
\label{eq_sigma*Hs}
\|\sigma\|_{H^s, (1+|\cdot|)^s} =
\|\sigma^*\|_{H^s,(1+|\cdot|)^s}.
\end{equation}

We have the following  inclusions and log-convexity.
\begin{lemma}
\label{lem_HsGh_inclusion}
The following continuous inclusions holds
for $s_2\geq s_1\geq 0$.
$$
L^2(\Gh)=H^0(\Gh) \supset H^{s_1}(\Gh) \supset H^{s_2}(\Gh).
$$

If $s$ is between the two non-negative numbers $s_1$ and $s_2$,
then  
$$
\|\sigma\|_{H^s,\omega_s}
\leq
\|\sigma\|_{H^{s_1},\omega_{s_1}}^\theta
\|\sigma\|_{H^{s_2},\omega_{s_2}}^{1-\theta},
$$
having written $s= \theta s_1 +(1-\theta)s_2$,  
with  $\theta\in [0,1]$,
and fixed a quasi-norm $|\cdot|$ and $\omega_s=(1+|\cdot|)^s$. 
\end{lemma}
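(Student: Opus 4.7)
The plan is to transfer both assertions, via the isometry established in Proposition \ref{prop_HsGh_hilbert}~(\ref{item_prop_HsGh_hilbert_Hilbertspace}), to weighted $L^2$-estimates on $G$ and then deduce them from elementary pointwise and H\"older-type inequalities. Throughout, fix a homogeneous quasi-norm $|\cdot|$ on $G$, set $\omega_s := (1+|\cdot|)^s$ for $s\geq 0$, and write $f := \cF_G^{-1}\sigma$, so that
\[
\|\sigma\|_{H^s,\omega_s} = \|\omega_s\, f\|_{L^2(G)}.
\]

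For the continuous inclusions, I would simply observe that $1+|x|\geq 1$ for every $x\in G$, hence $\omega_{s_1}(x)\leq \omega_{s_2}(x)$ pointwise whenever $0\leq s_1\leq s_2$. Consequently,
\[
\|\sigma\|_{H^{s_1},\omega_{s_1}}
= \|\omega_{s_1} f\|_{L^2(G)}
\leq \|\omega_{s_2} f\|_{L^2(G)}
= \|\sigma\|_{H^{s_2},\omega_{s_2}},
\]
which gives $H^{s_2}(\Gh) \subset H^{s_1}(\Gh)$ with operator norm at most $1$. The case $s_1=0$ recovers the inclusion into $L^2(\Gh)=H^0(\Gh)$ via the Plancherel identity \eqref{eq_plancherel_formula}.

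For the log-convexity, write $s = \theta s_1 + (1-\theta) s_2$ with $\theta \in [0,1]$. The identity $2s = 2\theta s_1 + 2(1-\theta)s_2$ together with $|f|^2 = |f|^{2\theta}|f|^{2(1-\theta)}$ yields the pointwise factorisation
\[
\omega_s(x)^2 |f(x)|^2
= \bigl(\omega_{s_1}(x) |f(x)|\bigr)^{2\theta}\bigl(\omega_{s_2}(x) |f(x)|\bigr)^{2(1-\theta)}.
\]
For $\theta\in(0,1)$, I would then apply H\"older's inequality with conjugate exponents $1/\theta$ and $1/(1-\theta)$ to obtain
\[
\int_G \omega_s^2 |f|^2\, dx
\leq \Bigl(\int_G \omega_{s_1}^2 |f|^2\, dx\Bigr)^{\theta}\Bigl(\int_G \omega_{s_2}^2 |f|^2\, dx\Bigr)^{1-\theta},
\]
and taking square roots gives exactly $\|\sigma\|_{H^s,\omega_s}\leq \|\sigma\|_{H^{s_1},\omega_{s_1}}^{\theta}\|\sigma\|_{H^{s_2},\omega_{s_2}}^{1-\theta}$. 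The degenerate cases $\theta\in\{0,1\}$ are trivial.

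There is no genuine obstacle here: both statements reduce to standard facts about weighted $L^2$-spaces on $G$, and the only input beyond H\"older's inequality is the Plancherel-type identity that $\cF_G^{-1}$ is an isometry from $H^s(\Gh)$ onto $L^2(G,\omega_s^2)$, which was already recorded in Proposition \ref{prop_HsGh_hilbert}.
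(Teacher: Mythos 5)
Your proof is correct and follows essentially the same route as the paper: the inclusions come from the pointwise monotonicity $\omega_{s_1}\leq\omega_{s_2}$, and the interpolation inequality from the factorisation $\omega_s^2|f|^2=(\omega_{s_1}|f|)^{2\theta}(\omega_{s_2}|f|)^{2(1-\theta)}$ together with H\"older's inequality with exponents $1/\theta$ and $1/(1-\theta)$. Your write-up is if anything slightly cleaner than the paper's, which states the same H\"older step in a more compressed form.
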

\begin{proof}
The inclusions follow readily from 
$(1+|\cdot|)^{s_1}\leq (1+|\cdot|)^{s_2}$ when  $s_2\geq s_1\geq 0$.
For the log-convexity, we may assume $\theta\not=0,1$. 
Let $\kappa=\cF_G^{-1}\sigma \in L^2(\Gh)$.
We have
\begin{eqnarray*}
\|\sigma\|_{H^s,\omega_s}^2
=
\|\omega_s \kappa\|_{L^2(G)}^2
=
\|(\omega_{s_1}\kappa)^{2\theta} (\omega_{s_2}\kappa)^{2(1-\theta)}\|_{L^1(G)}
\\
\leq
\|\omega_{s_1}^{2\theta}\kappa\|_{L^p(G)}
\|\omega_{s_2}^{2(1-\theta)}\kappa\|_{L^q(G)},
\end{eqnarray*}
by H\"older's inequality with  $p=1/\theta$ and $q=1/(1-\theta)$.
\end{proof}

The difference operators are continuous on the Sobolev spaces:
\begin{lemma}
\label{lem_HsGh_Delta}
Let $s \geq0$.
Let $q$ be a continuous function on $G$ such that $q/\omega_s^{d/s}$ is bounded 
where $d\geq 0$ and  $\omega_s$ is a continuous function satisfying \eqref{eq_omega_s}.
Then $\Delta_q$ maps continuously $H^{s+d}(\Gh)$
to $H^s(\Gh)$:
$$
\exists C>0\qquad
\forall \sigma \in H^{s+d}(\Gh)
\qquad
\|\Delta_{q}\sigma\|_{H^{s}} \leq  
C
\|\sigma\|_{H^{s+d}} .
$$
An example of such a function $q$ is  any $d$-homogeneous polynomial. 
In particular 
$$
\|\Delta_{x^\alpha}\sigma\|_{H^{s}} \leq  
C
\|\sigma\|_{H^{s+[\alpha]}} .
$$
\end{lemma}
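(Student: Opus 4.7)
The plan is to reduce everything to a pointwise weight comparison on $G$ via the isomorphism $\cF_G : L^2(G,\omega_s^2) \to H^s(\Gh)$ established in Proposition \ref{prop_HsGh_hilbert}. First I would fix $\sigma \in H^{s+d}(\Gh)$ and set $\kappa := \cF_G^{-1}\sigma$, so that $\|\sigma\|_{H^{s+d}} = \|\omega_{s+d}\kappa\|_{L^2(G)}$. From the very definition of the difference operator, $\cF_G^{-1}(\Delta_q\sigma) = q\kappa$ (at least on the dense subspace where $\kappa \in \cD(G)$), and hence
$$\|\Delta_q\sigma\|_{H^s} = \|\omega_s\, q\kappa\|_{L^2(G)}.$$

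The heart of the matter is now a pointwise inequality. The hypothesis $|q| \leq C\omega_s^{d/s}$, combined with the two-sided bound \eqref{eq_omega_s} applied both to $\omega_s$ and to $\omega_{s+d}$, yields
$$\omega_s(x)\,|q(x)| \;\leq\; C\,(1+|x|)^{s}(1+|x|)^{d} \;=\; C\,(1+|x|)^{s+d} \;\leq\; C'\,\omega_{s+d}(x), \qquad x\in G.$$
Inserting this bound into the previous identity gives
$$\|\Delta_q\sigma\|_{H^s} \;\leq\; C'\,\|\omega_{s+d}\,\kappa\|_{L^2(G)} \;=\; C'\,\|\sigma\|_{H^{s+d}},$$
which is the claimed continuity estimate.

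For the concluding assertions, any $d$-homogeneous polynomial $P$ on $G$ is continuous and, by homogeneity and compactness of $\fS$, satisfies $|P(x)|\leq C|x|^d\leq C(1+|x|)^d$, so it falls under the hypothesis; the coordinate function $x^\alpha$ is a $[\alpha]$-homogeneous polynomial, giving the last displayed inequality with $d=[\alpha]$. I do not foresee a real obstacle: the only mildly delicate point is ensuring that $\Delta_q\sigma$ makes sense for an arbitrary $\sigma \in H^{s+d}(\Gh)$ rather than merely on $\cF_G(\cD(G))$, and this is handled by extending the a priori bound above by continuity, using that $\cD(G)$ is dense in $L^2(G,\omega_{s+d}^2)$ and hence $\cF_G(\cD(G))$ is dense in $H^{s+d}(\Gh)$.
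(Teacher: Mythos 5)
Your proof is correct and takes essentially the same approach as the paper: both reduce the claim to the pointwise weight inequality $\omega_s\,|q|\lesssim \omega_{s+d}$ via the identification $\|\tau\|_{H^{s}}=\|\omega_{s}\,\cF_G^{-1}\tau\|_{L^2(G)}$ and the equivalence of admissible weights from Proposition \ref{prop_HsGh_hilbert}. Your additional remarks on the density/extension issue and on the homogeneous polynomial example are consistent with the paper, which leaves those points implicit.
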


\begin{proof}[Proof of Lemma \ref{lem_HsGh_Delta}]
We have
\begin{eqnarray*}
\|\Delta_{q}\sigma\|_{H^{s}}
=
\|q \omega_s \cF_G^{-1}\sigma\|_{L^2(G)}
\leq 
\| q / { \omega_s^{d/s}}\|_{L^\infty(G)}
\|\omega_s^{\frac ds +1} \cF_G^{-1}\sigma\|_{L^2(G)},
\\
=
\| q / { \omega_s^{d/s}}\|_{L^\infty(G)}
\|\sigma\|_{H^s, \omega'_{s'}},
\end{eqnarray*}
where $\omega'_{s'}$ is the continuous function $\omega_s^{\frac ds +1} $
which satisfies \eqref{eq_omega_s} with $s'=d+s$.
\end{proof}

 The Sobolev spaces with integer exponents admit an equivalent description:
 \begin{lemma}
\label{lem_HsGh_integer}
If $s$ is a common multiple of $\upsilon_1,\ldots,\upsilon_n$, i.e. $s\in \nu_o\bN$, then  \begin{eqnarray*}
\sigma\in H^s(\Gh)
&\Longleftrightarrow&
\forall \alpha\in \bN_0^n, \ [\alpha]\leq s,
\quad
 \Delta_{x^\alpha} \sigma \in L^2(\Gh).
\end{eqnarray*}
Moreover 
$\sum_{[\alpha]\leq s}
\|\Delta_{x^\alpha}  \cdot\|_{L^2(\Gh)}$
is  an equivalent norm on $H^s(\Gh)$.
\end{lemma}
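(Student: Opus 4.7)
My plan is to translate the statement, via the inverse group Fourier transform, into an equivalent statement about weighted $L^2$ spaces on $G$, and then reduce to a pointwise comparison of weight functions on $G$.

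Setting $\kappa := \cF_G^{-1}\sigma \in L^2(G)$, Proposition \ref{prop_HsGh_hilbert} gives $\sigma \in H^s(\Gh)$ if and only if $(1+|\cdot|)^s\kappa \in L^2(G)$, while by definition of the difference operator and the Plancherel theorem, $\Delta_{x^\alpha}\sigma = \cF_G(x^\alpha \kappa)$, so $\Delta_{x^\alpha}\sigma \in L^2(\Gh)$ is equivalent to $x^\alpha\kappa \in L^2(G)$. The lemma therefore reduces to the equivalence of weights
\begin{equation*}
(1+|x|)^s \;\asymp\; \sum_{[\alpha]\leq s} |x^\alpha|, \qquad x \in G,
\end{equation*}
for any fixed homogeneous quasi-norm $|\cdot|$, from which the statement and the norm equivalence follow by squaring, integrating, and applying Plancherel.

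For the estimate $|x^\alpha|\leq C(1+|x|)^s$ when $[\alpha]\leq s$: $x^\alpha$ is a continuous function homogeneous of degree $[\alpha]$, hence bounded on the compact unit sphere $\fS$, and by homogeneity one gets $|x^\alpha|\leq C|x|^{[\alpha]}\leq C(1+|x|)^s$. For the reverse estimate I would exploit the divisibility assumption: since $s$ is a common multiple of $\upsilon_1,\ldots,\upsilon_n$, I may invoke the explicit quasi-norm from \eqref{eq_quasinormnuo} with $\nu_o=s$, namely $|x|_s = \big(\sum_j x_j^{2s/\upsilon_j}\big)^{1/(2s)}$, whose $2s$-th power is exactly $\sum_j x_j^{2s/\upsilon_j}$. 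Raising to the power $1/2$ and using $\sqrt{a_1+\cdots+a_n}\leq \sqrt{a_1}+\cdots+\sqrt{a_n}$, I obtain $|x|_s^s \leq \sum_{j=1}^n |x_j|^{s/\upsilon_j}$, and each $|x_j|^{s/\upsilon_j}=|x^{\beta_j}|$ with $\beta_j:=(s/\upsilon_j)e_j$ satisfying $[\beta_j]=s$. Equivalence of quasi-norms (Proposition \ref{prop_homogeneous_quasi_norm}) then gives $|x|^s \leq C\sum_j|x^{\beta_j}|$, and including the constant monomial $|x^{0}|=1$ together with the elementary bound $(1+|x|)^s\leq 2^s(1+|x|^s)$ yields $(1+|x|)^s\leq C\sum_{[\alpha]\leq s}|x^\alpha|$, as required.

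The main (and in fact only nontrivial) obstacle is the second inequality, which is where the hypothesis $s\in\nu_o\bN$ is used in an essential way: without the divisibility, the exponent $s/\upsilon_j$ would not be an integer and $x^{\beta_j}$ would not be a polynomial, so no finite set of monomials $x^\alpha$ could dominate $(1+|x|)^s$. The rest is bookkeeping: the two pointwise inequalities give $\|\kappa\|_{L^2(G,(1+|\cdot|)^{2s})}^2 \asymp \sum_{[\alpha]\leq s}\|x^\alpha\kappa\|_{L^2(G)}^2$, which via Plancherel and Proposition \ref{prop_HsGh_hilbert} translates into the equivalence of the two descriptions of $H^s(\Gh)$ and the equivalence of norms claimed in the statement.
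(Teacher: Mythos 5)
Your proof is correct and follows essentially the same route as the paper: both reduce, via Plancherel, to comparing the weight $(1+|\cdot|)^s$ with the monomials $x^\alpha$, $[\alpha]\leq s$, and both exploit the divisibility hypothesis through the explicit quasi-norm \eqref{eq_quasinormnuo} to write a power of that weight in terms of squared monomials (the paper expands $\omega_s^2=(1+|x|_{\nu_o}^{2\nu_o})^{s/\nu_o}$ as $\sum_{[\alpha]\leq s}c_\alpha (x^\alpha)^2$, while you take $\nu_o=s$ and bound $(1+|x|)^s$ by $\sum_j |x^{\beta_j}|$ plus a constant). Your easy direction via homogeneity of $x^\alpha$ is exactly the content of Lemma \ref{lem_HsGh_Delta}, which is what the paper cites for the reverse inequality.
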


\begin{proof}[Proof of Lemma \ref{lem_HsGh_integer}]
Let $s\in \nu_o\bN$.
We consider the quasi-norm is $|\cdot|=|\cdot|_{\nu_o}$ given by
\eqref{eq_quasinormnuo} and the continuous function $\omega_s=
(1+|\cdot|^{2\nu_o})^{\frac s{2\nu_o}}$ which satisfies \eqref{eq_omega_s}.
Then $\omega_s^2$ is a polynomial in $x$, and more precisely a linear combination of squared  monomials:
$$
\omega_s^2(x)
=\sum_{[\alpha]\leq s} c_\alpha (x^{\alpha})^2
$$
for  some coefficients $(c_\alpha)$ depending on $s$ and $G$.
Thus
\begin{eqnarray*}
&&\|\sigma\|_{H^s,\omega_s}^2
=
\|\omega_s \cF_G^{-1}\sigma\|^2_{L^2(G)}
=\int_G 
\big|\sum_{[\alpha]\leq s} c_\alpha (x^{\alpha})^2\big| 
|\cF_G^{-1}\sigma (x)|^2 dx \\
&&\quad\leq
\sum_{[\alpha] \leq s} |c_\alpha| \int_G \big| x^\alpha \cF_G^{-1}\sigma (x)|^2 dx 
\leq
C \sum_{[\alpha] \leq s} \|\Delta_{x^\alpha}  \sigma\|_{L^2(\Gh)}^2.
\end{eqnarray*}
We have obtained
$$
\|\sigma\|_{H^s,\omega_s}
\leq C \sum_{[\alpha] \leq s} \|\Delta_{x^\alpha}  \sigma\|_{L^2(\Gh)}.
$$
The reverse inequality follows from Lemma \ref{lem_HsGh_Delta}.
\end{proof}

\begin{remark}
\begin{itemize}
\item 
In Lemma \ref{lem_HsGh_integer}, 
$(x^\alpha)$ may be replaced by any basis of homogeneous polynomials.
\item 
The hypothesis of divisibility of $s$ by $\upsilon_1,\ldots,\upsilon_n$
can not be removed in Lemma \ref{lem_HsGh_integer}. 
Indeed 
let us fix an index $\ell =1,\ldots,n$, and construct a sequence of symbols $\sigma_k$, $k\in\bN$, 
via $\cF_G^{-1} \sigma_k (x)=1_{|x_\ell-k|<1} \prod_{j\not=\ell} 1_{|x_j|<1}$.
One checks easily that 
$$
\|\sigma_k\|_{H^s}
\asymp 
k^s
\quad\mbox{but}\quad
\sum_{[\alpha]\leq s}
\|\Delta_{x^\alpha}  \sigma_k\|_{L^2(\Gh)}
\asymp
\sum_{[\alpha]\leq s} k^{\alpha_\ell}.
$$
If $s$ is a positive integer which is not divisible by $\upsilon_\ell$ then 
$k^{-s}\sum_{[\alpha]\leq s} k^{\alpha_\ell}  \to 0$ as $k\to\infty$.
\end{itemize}
\end{remark}

The following analogue of the Sobolev embedding holds
as an easy consequence of Corollary \ref{cor_prop_polar_coord}
with \eqref{eq_Fnorm_L1}:
\begin{lemma}
\label{lem_HsGh_sob_embedding}
 If $\sigma\in H^s(\Gh)$ with $s>Q/2$ then 
$\sigma\in \cF_G L^1(G)$ and
$$
\sup_{\pi\in \Gh} \|\sigma\|_{\sL(\cH_\pi)} \leq
\|\cF_G^{-1}\sigma\|_{L^1(G)}
\leq  C\|\sigma\|_{H^s}.
$$
\end{lemma}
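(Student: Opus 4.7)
The strategy is essentially to unwind the definitions and chain two inequalities that are already at hand. Let $f := \cF_G^{-1}\sigma$. By Proposition \ref{prop_HsGh_hilbert}, the hypothesis $\sigma\in H^s(\Gh)$ is equivalent to $f\in L^2(G,(1+|\cdot|)^{2s})$, and the norm satisfies $\|\sigma\|_{H^s} = \|(1+|\cdot|)^s f\|_{L^2(G)}$ once a quasi-norm $|\cdot|$ is fixed and $\omega_s = (1+|\cdot|)^s$ is chosen.

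First I would apply Corollary \ref{cor_prop_polar_coord} to $f$: since $s>Q/2$, there is a constant $C>0$ (depending only on $s$ and $G$) with
$$\|f\|_{L^1(G)} \leq C\,\|(1+|\cdot|)^s f\|_{L^2(G)} = C\,\|\sigma\|_{H^s}.$$
In particular $f\in L^1(G)$, so $\sigma = \widehat{f}$ lies in $\cF_G L^1(G)$.

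Next I would apply the elementary bound \eqref{eq_Fnorm_L1}, $\|\widehat{f}(\pi)\|_{\sL(\cH_\pi)} \leq \|f\|_{L^1(G)}$, uniformly in $\pi\in\Gh$, to conclude
$$\sup_{\pi\in\Gh}\|\sigma(\pi)\|_{\sL(\cH_\pi)} = \sup_{\pi\in\Gh}\|\widehat{f}(\pi)\|_{\sL(\cH_\pi)} \leq \|f\|_{L^1(G)} = \|\cF_G^{-1}\sigma\|_{L^1(G)}.$$
Chaining with the previous estimate yields both inequalities in the statement. There is no genuine obstacle here; the only thing to note is that the equivalence of quasi-norms (Proposition \ref{prop_homogeneous_quasi_norm}) and the flexibility in the choice of weight $\omega_s$ (Proposition \ref{prop_HsGh_hilbert}) guarantee that the constant $C$ depends only on $G$, $s$, and the chosen quasi-norm, not on $\sigma$.
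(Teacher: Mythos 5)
Your proof is correct and follows exactly the route the paper indicates: the lemma is stated there as an immediate consequence of Corollary \ref{cor_prop_polar_coord} (giving $\|f\|_{L^1(G)}\leq C\|(1+|\cdot|)^s f\|_{L^2(G)}$ for $s>Q/2$) combined with the elementary bound \eqref{eq_Fnorm_L1}. Nothing further is needed.
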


As in the Euclidean case, we obtain an algebra for `point-wise multiplication' in the following sense:
\begin{lemma}
\label{lem_HsGh_algebra}
For any $\sigma$ and $\tau$ in $H^s(\Gh)$, 
the product $\sigma\tau=\{\sigma(\pi)\tau(\pi), \pi\in \Gh\}$  satisfies
(with possibly unbounded norms)
$$
\|\sigma \tau\|_{H^s}
\leq C
\left(\|\sigma\|_{H^s} 
\|\cF_G^{-1}\tau\|_{L^1(G)}
+
\|\cF_G^{-1}\sigma\|_{L^1(G)}
\|\tau\|_{H^s}\right),
$$ 
with a constant $C>0$ independent of $\sigma$ and $\tau$.

Hence for $s>Q/2$, 
if $\sigma, \tau\in H^s(\Gh)$
then 
$\sigma\tau\in H^s(\Gh)$,
and $H^s(\Gh)$ is a (non-commutative) algebra.
\end{lemma}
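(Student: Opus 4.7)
The plan is to transfer the statement to the ``spatial side'' of $G$ via inverse Fourier transform and then exploit the additive behaviour of the weight $\omega_s$ under the quasi-norm triangle inequality. Writing $f=\cF_G^{-1}\sigma$ and $g=\cF_G^{-1}\tau$, the convolution theorem gives $\cF_G^{-1}(\sigma\tau)=f*g$, and by Proposition \ref{prop_HsGh_hilbert} the norm we need to estimate is $\|\sigma\tau\|_{H^s}=\|\omega_s\,(f*g)\|_{L^2(G)}$, where $\omega_s=(1+|\cdot|)^s$ for a fixed homogeneous quasi-norm $|\cdot|$.

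The key pointwise inequality I would establish is
$$
\omega_s(x)\ \leq\ C_s\bigl(\omega_s(y)+\omega_s(y^{-1}x)\bigr), \qquad x,y\in G.
$$
This follows from the triangle inequality $|x|\leq C(|y|+|y^{-1}x|)$ in Proposition \ref{prop_homogeneous_quasi_norm} together with the elementary bound $(a+b)^s\leq 2^s(a^s+b^s)$ valid for all $s\geq 0$. Note that this is an \emph{additive} substitute for the multiplicative estimate \eqref{eq_omegasxy} used in Lemma \ref{lem_L1omegas}; the additive form is what produces the two-term right-hand side in the statement.

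Inserting this into the convolution integral gives
$$
\omega_s(x)\,|f*g|(x)\ \leq\ C_s\bigl(|\omega_s f|*|g|\,(x)\ +\ |f|*|\omega_s g|\,(x)\bigr),
$$
and I would then apply the Young convolution inequality \eqref{eq_young} in the form $L^2*L^1\subset L^2$ to each term. This yields
$$
\|\omega_s(f*g)\|_{L^2(G)}\ \leq\ C_s\bigl(\|\omega_s f\|_{L^2(G)}\|g\|_{L^1(G)}+\|f\|_{L^1(G)}\|\omega_s g\|_{L^2(G)}\bigr),
$$
which, translated back through $\cF_G$, is exactly the claimed inequality.

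Finally, the algebra statement in the case $s>Q/2$ is a direct consequence: by the Sobolev-type embedding in Lemma \ref{lem_HsGh_sob_embedding} we have $\|\cF_G^{-1}\sigma\|_{L^1(G)}\leq C\|\sigma\|_{H^s}$ and similarly for $\tau$, so plugging these bounds into the inequality just proved gives $\|\sigma\tau\|_{H^s}\leq C\|\sigma\|_{H^s}\|\tau\|_{H^s}$. I do not anticipate a real obstacle here; the only subtlety is getting the additive triangle inequality for $\omega_s$ right (in particular covering both the regimes $s\in[0,1]$ and $s\geq 1$ uniformly), which is handled by the elementary $(a+b)^s\leq 2^s(a^s+b^s)$ estimate.
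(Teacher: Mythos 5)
Your proof is correct and follows essentially the same route as the paper: the additive inequality $\omega_s(x)\le C\left(\omega_s(y)+\omega_s(y^{-1}x)\right)$ derived from the quasi-norm triangle inequality, the resulting pointwise bound $\omega_s\,|f*g|\le C\left(|\omega_s f|*|g|+|f|*|\omega_s g|\right)$, Young's inequality $L^2*L^1\subset L^2$, and Lemma \ref{lem_HsGh_sob_embedding} for the algebra conclusion when $s>Q/2$. The only slip is that with the paper's convention $\pi(f)=\int_G f(x)\pi(x)^*\,dx$ one has $\cF_G(f*g)=\widehat g\,\widehat f$, so $\cF_G^{-1}(\sigma\tau)=g*f$ rather than $f*g$; since the right-hand side of the claimed estimate is symmetric in $\sigma$ and $\tau$, this does not affect the argument.
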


Note that Lemma \ref{lem_L1omegas} only yields
\begin{equation}
\label{eq_cq_lem_L1omegas} 
\|\sigma \tau\|_{H^s}
\lesssim \|\cF_G^{-1} \tau\|_{L^1(\omega_s)}
\|\sigma\|_{H^s}
\end{equation}
when a quasi-norm $|\cdot|$ and $\omega_s=(1+|\cdot|)^s$
with $s\geq 0$ have been fixed.
This does not prove Lemma \ref{lem_HsGh_algebra}.

 \begin{proof}[Proof of Lemma \ref{lem_HsGh_algebra}]
We fix a quasi-norm $|\cdot|$ and $\omega_s=(1+|\cdot|)^s$.
As a quasi-norm satisfies a triangular inequality (see Proposition \ref{prop_homogeneous_quasi_norm}), 
one checks easily
\begin{equation}
\label{eq_omegasxy+}
\exists C=C_{s,|\cdot|}\quad\forall x,y\in G\quad
\omega_s(x)\leq C 
\left(
\omega_s(xy^{-1})
+\omega_s(y)\right).
\end{equation}

Let $\sigma,\tau \in H^s(\Gh)$
and $f:=\cF_G^{-1}\sigma$, $g:=\cF_G^{-1}\tau$.
Then 
$$
\|\sigma\tau\|_{H^s,\omega_s}
=
\| \omega_s\ g*f\|_{L^2(G)}.
$$
The inequality in \eqref{eq_omegasxy+} implies
$$
\omega_s\ |g*f| \leq C \left(
(\omega_s |g|)*|f|+  |g|*(\omega_s|f|) \right),
$$
thus we obtain 
\begin{eqnarray*}
\| \omega_s\ g*f\|_{L^2(G)}
&\leq&
 C 
\left(\|(\omega_s |g|)*|f|\|_{L^2(G)}+ \| |g|*(\omega_s|f|)\|_{L^2(G)}
\right)
\\
&\leq&
 C 
\left(\|\omega_s |g|\|_{L^2(G)}\|f\|_{L^1(G)}
+ \|g\|_{L^1(G)}\|\omega_s f \|_{L^2(G)}
\right),
\end{eqnarray*}
by Young's inequality (see \eqref{eq_young}).
With Lemma \ref{lem_HsGh_sob_embedding},
the statement follows easily.
\end{proof}

\section{The Mihlin-H\"ormander condition on $\Gh$}
\label{sec_MHcond}

In the Euclidean case, 
the Mihlin-H\"ormander condition 
which implies that a function is an $L^p$-multiplier for all $p>1$
is the membership in Sobolev spaces locally uniformly, see the introduction.
The aim of this section is to define the membership in Sobolev spaces locally uniformly in our context and express our main multiplier theorem in term of this membership.
This requires first to define dilations on $\Gh$.

\subsection{Dilations on $\Gh$}
\label{subsec_dilation_Gh}

In this section, we define dilations on the set $\Gh$.
This is possible thanks to the following lemma 
whose proof is a routine exercise of representation theory:
\begin{lemma}
\begin{enumerate}
\item 
If $\pi$ is a unitary irreducible representation of $G$
and $r>0$
then setting
\begin{equation}
\label{eq_def_rpi}
r\cdot \pi (x)
=
\pi(r x)
\quad,\quad x\in G,
\end{equation}
we have defined a unitary irreducible representation $r\cdot \pi$ of $G$.
\item 
If $\pi_1$ and $\pi_2$ are two equivalent unitary irreducible representations of $G$,
then, for any $r>0$,
$r\cdot \pi_1$ and $r\cdot \pi_2$ are  two equivalent unitary irreducible representations of $G$.
\end{enumerate}
\end{lemma}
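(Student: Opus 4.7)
The plan is to observe that each group dilation $D_r : G \to G$, $x \mapsto r \cdot x$, is not only a diffeomorphism of $G$ but in fact a group automorphism: this follows from the fact that on a graded (hence homogeneous) Lie group the dilations act as Lie algebra automorphisms on $\fg$ (each $\fg_\ell$ is an eigenspace) and intertwine with $\exp_G$. Once this is in hand, the whole lemma becomes an instance of the general principle that composing a unitary representation with a continuous group automorphism yields a unitary representation on the same Hilbert space, and this operation preserves equivalence classes.

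For part (1), I would take $r\cdot\pi := \pi\circ D_r$ and check the required properties in order. Unitarity of each $(r\cdot\pi)(x)=\pi(rx)$ is immediate from unitarity of $\pi$; the homomorphism property $(r\cdot\pi)(xy)=(r\cdot\pi)(x)(r\cdot\pi)(y)$ follows from $D_r(xy)=D_r(x)D_r(y)$; strong continuity is inherited from $\pi$ via the continuity of $D_r$. For irreducibility, if $M\subset\cH_\pi$ is a closed subspace invariant under all $(r\cdot\pi)(x)=\pi(rx)$, then since $D_r$ is a bijection of $G$ onto itself the set $\{rx:x\in G\}$ equals $G$, so $M$ is invariant under $\pi(y)$ for every $y\in G$; irreducibility of $\pi$ then forces $M\in\{\{0\},\cH_\pi\}$.

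For part (2), I would simply transport the intertwiner. If $U:\cH_{\pi_1}\to\cH_{\pi_2}$ is a unitary isomorphism with $U\pi_1(x)=\pi_2(x)U$ for all $x\in G$, applying this identity at $rx$ in place of $x$ gives
\[
U(r\cdot\pi_1)(x) = U\pi_1(rx) = \pi_2(rx)U = (r\cdot\pi_2)(x)U,
\]
so the same $U$ intertwines $r\cdot\pi_1$ with $r\cdot\pi_2$, proving their equivalence.

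I do not expect a genuine obstacle here: the only point that requires even a line of thought is the fact that $D_r$ is a group automorphism, which itself reduces to the compatibility between the gradation of $\fg$ and the Baker--Campbell--Hausdorff formula on the nilpotent group $G$; everything else is formal. The resulting well-defined map $\Gh\ni[\pi]\mapsto[r\cdot\pi]\in\Gh$ is the dilation on $\Gh$ that the subsequent section will exploit.
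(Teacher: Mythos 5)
Your proof is correct. Note that the paper does not actually supply a proof of this lemma: it is dismissed as ``a routine exercise of representation theory for nilpotent Lie groups and the orbit method,'' so there is nothing to compare line by line. Your argument is exactly the standard one that the authors are alluding to: the key observation is that $D_r$ is a group automorphism (because each $\fg_\ell$ is a $D_r$-eigenspace and the gradation makes $D_r$ a Lie algebra automorphism, which lifts through $\exp_G$ on the simply connected nilpotent group), after which unitarity, strong continuity, the homomorphism property, irreducibility (via surjectivity of $D_r$), and the transport of the intertwiner in part (2) are all formal. You do not need the orbit method at all for this statement --- that machinery would only become relevant if one wanted to describe the dilation action on $\Gh$ explicitly in terms of coadjoint orbits, which the lemma does not require.
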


\begin{definition}
For any $\pi\in \Gh$ and any $r>0$, 
Equation \eqref{eq_def_rpi} defines a new class 
$$
r\cdot \pi:=D_r (\pi) \in \Gh.
$$
\end{definition}

These dilations define an  action of $\bR^{*+}$ on $\Gh$
which interacts nicely with the group structure:
\begin{lemma}
\label{lem_rpiXalpha+kappa}
Let $\pi\in \Gh$ and $r>0$.

For any $\alpha\in \bN^n$,
$$
r\cdot \pi (X^\alpha) = r^{[\alpha]} \pi(X^\alpha).
$$

For any positive Rockland operator $\cR$ of degree $\nu_\cR$,
$$
r\cdot \pi (\cR) =r^{\nu_\cR} \pi(\cR)
$$
and if $f\in L^\infty(\bR)$ then (spectral definitions)
$$
f(r\cdot \pi(\cR))=f(r^{\nu_\cR} \pi(\cR)).
$$

If $\kappa\in L^2(G)\cup L^1(G)$ then
$$
(r\cdot \pi) (\kappa) = \pi ( r^{-Q} \kappa(r^{-1}\cdot)).
$$
Consequently,
$$
\Delta^\alpha \{\widehat \kappa 
(r\cdot \pi)\} = 
r^{[\alpha]}
\left(\Delta^\alpha \widehat \kappa \right) (r\cdot \pi).
$$
\end{lemma}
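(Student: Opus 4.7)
The plan is to verify the four assertions in order by unwinding definitions; each reduces ultimately to the homogeneous change of variables $y = rx$ on $G$, whose Jacobian with respect to the Haar measure equals $r^Q$ since $Q$ is the homogeneous dimension (cf.\ Proposition \ref{prop_polar_coord}). There is no significant obstacle; care must only be taken to keep track of this Jacobian factor and of which class in $\Gh$ each side is evaluated at.

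For claim (1), I would start with a single basis vector $X_j \in \fg_{\upsilon_j}$. Since $D_r$ is a Lie algebra automorphism we have $D_r(e^{tX_j}) = e^{tD_r X_j}$, and by construction $D_r X_j = r^{\upsilon_j} X_j$; differentiating $r\cdot\pi(e^{tX_j}) = \pi(e^{t r^{\upsilon_j} X_j})$ at $t=0$ via \eqref{eq_def_pi(X)} gives $r\cdot\pi(X_j) = r^{\upsilon_j}\pi(X_j)$ on $\cH_\pi^\infty$. Multiplicativity of the infinitesimal representation on $\fU(\fg)$, together with the identity $[\alpha] = \sum_j \upsilon_j \alpha_j$, then yields claim (1). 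For claim (2), any Rockland operator of degree $\nu_\cR$ is by homogeneity a linear combination of monomials $X^\alpha$ with $[\alpha] = \nu_\cR$, so claim (1) gives $r\cdot\pi(\cR) = r^{\nu_\cR}\pi(\cR)$ on $\cH_\pi^\infty$; both sides therefore share the same self-adjoint extension, the spectral measure of $r^{\nu_\cR}\pi(\cR)$ is $E_\pi \circ D_{r^{-\nu_\cR}}$, and the Borel functional calculus gives $f(r\cdot\pi(\cR)) = f(r^{\nu_\cR}\pi(\cR))$.

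For claim (3), I would insert the definition of the group Fourier transform and perform the change of variables $y = rx$:
$$
(r\cdot\pi)(\kappa) = \int_G \kappa(x)\,\pi(rx)^*\,dx = \int_G r^{-Q}\kappa(r^{-1}y)\,\pi(y)^*\,dy = \pi\!\left(r^{-Q}\kappa(r^{-1}\cdot)\right).
$$
For claim (4), setting $\kappa_r := r^{-Q}\kappa(r^{-1}\cdot)$, claim (3) identifies the field $\pi \mapsto \widehat\kappa(r\cdot\pi)$ with $\widehat{\kappa_r}$, so by the very definition of the difference operator
$$
\Delta^\alpha\{\widehat\kappa(r\cdot\pi)\} = \cF_G(x^\alpha \kappa_r).
$$
A direct computation gives $x^\alpha \kappa_r(x) = r^{-Q} x^\alpha \kappa(r^{-1}x) = r^{[\alpha]}(y^\alpha\kappa)_r(x)$, and applying claim (3) once more with $y^\alpha\kappa$ in place of $\kappa$ yields $\cF_G((y^\alpha\kappa)_r)(\pi) = (\widehat{y^\alpha\kappa})(r\cdot\pi) = (\Delta^\alpha\widehat\kappa)(r\cdot\pi)$, whence $\Delta^\alpha\{\widehat\kappa(r\cdot\pi)\} = r^{[\alpha]}(\Delta^\alpha\widehat\kappa)(r\cdot\pi)$ as claimed.
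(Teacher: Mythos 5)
Your proof is correct, and since the paper explicitly leaves the proof of this lemma to the reader, the routine definitional unwinding you carry out (dilation automorphism for the infinitesimal representation, homogeneity of $\cR$, the change of variables $y=rx$ with Jacobian $r^Q$, and the identity $x^\alpha = r^{[\alpha]}(r^{-1}x)^\alpha$ for the difference operators) is exactly the intended argument. The only cosmetic point is that the integral computation in claim (3) is literally valid for $\kappa\in L^1(G)$, with the $L^2$ case following by density and the Plancherel extension of the Fourier transform.
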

The proof of Lemma \ref{lem_rpiXalpha+kappa} is left to the reader.

The Sobolev spaces on $\Gh$ are invariant under these dilations:
\begin{lemma}
\label{lem_dilation_Hs}
Let $\sigma \in L^2(\Gh)$ and $s\geq 0$.
If $\sigma\in H^s(\Gh)$ then 
$\sigma\circ D_r = \{\sigma(r\cdot\pi),\pi\in \Gh\}$
is in  $H^s(\Gh)$
for  all $r>0$.
Furthermore 
let us fix  a quasi-norm $|\cdot|$ and  $\omega_s=(1+|\cdot|)^s$.
For every $r>0$ and $\sigma\in L^2(\Gh)$, we have
$$
\|\sigma\circ D_r\|_{H^s,\omega_s}
\leq 
(1+r)^s r^{-\frac Q2}
\|\sigma\|_{H^s,\omega_s}.
$$
\end{lemma}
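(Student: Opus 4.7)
The plan is to transport the dilation on $\Gh$ to a dilation in the convolution-kernel picture, and then use a change of variables in the weighted $L^2$-norm together with the homogeneity of the quasi-norm.

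First, I would write $\kappa := \cF_G^{-1}\sigma \in L^2(G)$ and use the identity
$$
(r\cdot \pi)(\kappa) = \pi\bigl(r^{-Q}\kappa(r^{-1}\cdot)\bigr)
$$
from Lemma \ref{lem_rpiXalpha+kappa}. Reading this in the reverse direction yields
$$
\cF_G^{-1}(\sigma\circ D_r)(x)=r^{-Q}\kappa(r^{-1}x),\qquad x\in G.
$$
Thus the question reduces, via the isometry of Proposition \ref{prop_HsGh_hilbert}(\ref{item_prop_HsGh_hilbert_Hilbertspace}), to estimating the weighted $L^2(G)$-norm of the rescaled kernel.

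Next, I would simply change variables $y=r^{-1}x$ (so $dx=r^Q\,dy$), getting
$$
\|\sigma\circ D_r\|_{H^s,\omega_s}^2
=\int_G \omega_s(x)^2\, r^{-2Q}|\kappa(r^{-1}x)|^2\,dx
=r^{-Q}\int_G \omega_s(ry)^2\,|\kappa(y)|^2\,dy.
$$
Using homogeneity of the quasi-norm, $|ry|=r|y|$, and the elementary inequality $1+r|y|\leq (1+r)(1+|y|)$ valid for all $r>0$ and $y\in G$, I obtain $\omega_s(ry)\leq (1+r)^s\omega_s(y)$. Inserting this into the integral gives exactly
$$
\|\sigma\circ D_r\|_{H^s,\omega_s}^2\leq r^{-Q}(1+r)^{2s}\,\|\sigma\|_{H^s,\omega_s}^2,
$$
which is the desired quantitative bound after taking square roots.

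The dilation invariance of $H^s(\Gh)$ then follows automatically: if $\sigma\circ D_r\in H^s(\Gh)$ for some $r>0$, applying the same bound with $r$ replaced by $r^{-1}$ to the symbol $\tau:=\sigma\circ D_r$ (so that $\tau\circ D_{r^{-1}}=\sigma$) yields $\sigma\in H^s(\Gh)$, and vice versa. There is no serious obstacle: the only delicate point is the elementary inequality $1+r|y|\leq (1+r)(1+|y|)$, which is needed precisely because $\omega_s$ is only equivalent to, and not equal to, a homogeneous function.
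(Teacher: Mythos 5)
Your proof is correct and follows essentially the same route as the paper: identify $\cF_G^{-1}(\sigma\circ D_r)=r^{-Q}\kappa(r^{-1}\cdot)$ via Lemma \ref{lem_rpiXalpha+kappa}, change variables in the weighted $L^2(G)$-norm, and conclude with the homogeneity $|r\cdot y|=r|y|$ and the inequality $(1+r|y|)^s\leq (1+r)^s(1+|y|)^s$. The explicit handling of the ``if and only if'' part by applying the bound to $\tau=\sigma\circ D_r$ with $r^{-1}$ is a welcome addition that the paper leaves implicit.
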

\begin{proof}[Proof of Lemma \ref{lem_dilation_Hs}]
 Lemma \ref{lem_rpiXalpha+kappa} and the change of variable $D_r$ yield
$$
\|\sigma \circ D_r\|_{H^s, \omega_{s}}
=
\|\omega_s r^{-Q} (\cF_G^{-1}\sigma)\circ D_{r^{-1}}\|_{L^2}
=
r^{-\frac Q2} 
\| (1+r|\cdot|)^s  \cF_G^{-1}\sigma\|_{L^2}.
$$
We conclude with 
$ (1+r|\cdot|)^s   \leq (1+r)^s\omega_s$.
\end{proof}

\subsection{Fields locally uniform in $H^s(\Gh)$}
The aim of this section is to define and study the Banach space of fields locally uniformly in $H^s(\Gh)$.
In the Euclidean case, 
the membership in Sobolev spaces locally uniformly
 is the Mihlin-H\"ormander condition
which implies that a function is an $L^p$-multiplier for all $p>1$.
This motivates the following definition.

\begin{definition}
\label{def_Hslu}
Let $s\geq 0$.
We say that  a measurable field of operators $\sigma=\{\sigma(\pi), \pi\in \Gh\}$
is \emph{uniformly locally} in $H^s(\Gh)$
on the right, respectively on the left,
when there exist a positive Rockland operator $\cR$ and 
a non-zero function $\eta\in \cD(0,\infty)$  such that
the quantity
\begin{equation}
\label{eq_def_HsluR_norm}
\|\sigma\|_{H^s_{l.u.,R},\eta,\cR}:=
\sup_{r>0} \|\{\sigma (r\, \cdot\pi)\ \eta (\pi(\cR)), \pi\in \Gh\}\|_{H^s} 
\end{equation}
or respectively
\begin{equation}
\label{eq_def_HsluL_norm}
\|\sigma\|_{H^s_{l.u.,L},\eta,\cR}:=
\sup_{r>0} \| \{\eta (\pi(\cR)) \ \sigma (r\cdot\pi), \pi\in \Gh\}\|_{H^s},
\end{equation}
is finite.
\end{definition}

Our first task will be to show that, as in the Euclidean case, 
this definition does not depend on the cut-off function.
Here we also have to prove that it does not depend on the Rockland operator.
This is the object of the following statement which will be proved in 
Section \ref{subsec_pf_prop_Hslu_norm}.
\begin{proposition}
\label{prop_Hslu_norm}
Let 
$\sigma=\{\sigma(\pi), \pi\in \Gh\}$
be  a measurable field of operators such that 
$\|\sigma\|_{H^s_{l.u.,R},\eta,\cR}$ is finite for some  positive Rockland operator $\cR$ and $\eta\in \cD(0,\infty)\setminus \{0\}$.
Then  for any positive Rockland operator $\cS$ and 
any function $\zeta\in \cD(0,\infty)$,  
the quantity $\|\sigma\|_{H^s_{l.u.,R},\zeta,\cS}$ is finite 
and there exists a constant $C>0$ 
(depending on $\cR,\cS$ and $\eta,\zeta$ but not on $\sigma$)
such that
$$
\|\sigma\|_{H^s_{l.u.,R},\zeta,\cS}
\leq 
C \|\sigma\|_{H^s_{l.u.,R},\eta,\cR}.
$$
\end{proposition}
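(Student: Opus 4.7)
The plan is to translate everything to weighted $L^2$-estimates on $G$ via Hulanicki's theorem, then to treat the two invariances (choice of cut-off $\eta$ and choice of Rockland operator $\cR$) successively, reducing the second to the first.

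\emph{Setup.} Let $\kappa := \cF_G^{-1}\sigma$. By Hulanicki's Theorem \ref{thm_hula}, $\eta(\cR)\delta_0\in\cS(G)$ and its group Fourier transform is exactly the field $\pi\mapsto\eta(\pi(\cR))$. Combining this with the convolution identity $\widehat{f*g}=\widehat g\,\widehat f$ and the dilation identities of Lemma \ref{lem_rpiXalpha+kappa}, one obtains, for any quasi-norm $|\cdot|$ and $\omega_s=(1+|\cdot|)^s$,
$$
\|\sigma(r\cdot\pi)\eta(\pi(\cR))\|_{H^s}
= \|\omega_s\cdot(\eta(\cR)\delta_0 * \kappa_r)\|_{L^2(G)},\qquad \kappa_r(x):=r^{-Q}\kappa(r^{-1}x),
$$
and an analogous identity for the pair $(\zeta,\cS)$. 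The rest of the argument is carried out in this weighted $L^2$ setting.

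\emph{Step 1: independence of the cut-off for fixed $\cR$.} Given $\eta,\zeta\in\cD(0,\infty)$ with $\eta\not\equiv0$, the compactness of $\mathrm{supp}\,\zeta$ in $(0,\infty)$ allows one to find finitely many $r_1,\ldots,r_K>0$ and $\psi_1,\ldots,\psi_K\in\cD(0,\infty)$ with
$\zeta(\lambda)=\sum_{k=1}^K \eta(r_k^{\nu_\cR}\lambda)\,\psi_k(\lambda)$. Using $(r\cdot\pi)(\cR)=r^{\nu_\cR}\pi(\cR)$, this gives the identity
$$
\sigma(r'\cdot\pi)\,\zeta(\pi(\cR)) = \sum_{k=1}^K \sigma(r'\cdot\pi)\,\eta((r_k\cdot\pi)(\cR))\,\psi_k(\pi(\cR)).
$$
For each summand, the weighted Young inequality of Lemma \ref{lem_L1omegas} peels off the Schwartz kernel $\psi_k(\cR)\delta_0$, and the substitution $\pi=r_k^{-1}\pi'$ combined with the dilation estimate of Lemma \ref{lem_dilation_Hs} turns $\sigma(r'\cdot\pi)\,\eta((r_k\cdot\pi)(\cR))$ into a controlled multiple of $\sigma((r'/r_k)\cdot\pi')\,\eta(\pi'(\cR))$. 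Summing the finitely many terms and taking the supremum over $r'>0$ yields $\|\sigma\|_{H^s_{l.u.,R},\zeta,\cR}\leq C\|\sigma\|_{H^s_{l.u.,R},\eta,\cR}$.

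\emph{Step 2: independence of the Rockland operator.} By Step 1 the cut-offs can be chosen freely. Pick $\psi\in\cD(0,\infty)$ realising a Littlewood--Paley resolution $\sum_{j\in\bZ}\psi(2^{j\nu_\cR}\lambda)\equiv 1$ on $(0,\infty)$, and $\eta\in\cD(0,\infty)$ equal to $1$ on $\mathrm{supp}\,\psi$. Applied to the Schwartz function $g:=\zeta(\cS)\delta_0$ given by Hulanicki, this produces
$$
g = \sum_{j\in\bZ} h_j * \eta(2^{j\nu_\cR}\cR)\delta_0,\qquad h_j:=\psi(2^{j\nu_\cR}\cR)g.
$$
Inserting this decomposition into the kernel identity for $(\zeta,\cS)$ and repeating the Young/dilation argument of Step 1 bounds $\|\sigma(r'\cdot\pi)\,\zeta(\pi(\cS))\|_{H^s}$ by
$$
C\,\|\sigma\|_{H^s_{l.u.,R},\eta,\cR}\,\sum_{j\in\bZ}(1+2^j)^s\,2^{-jQ/2}\,\|\omega_s h_j\|_{L^1(G)}.
$$
The left-hand statement is derived symmetrically, swapping the factors in the convolution identity and invoking the right-invariant version of Hulanicki's theorem.

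\emph{Main obstacle.} The crux of the proof is to establish sufficiently rapid decay of $\|\omega_s h_j\|_{L^1}$ in $|j|$ to absorb the geometric factor $(1+2^j)^s 2^{-jQ/2}$ coming from Lemma \ref{lem_dilation_Hs}. For $j\to-\infty$ (high $\cR$-frequency), one writes $\psi(\lambda)=\lambda^{-N}\tilde\psi_{-N}(\lambda)$ with $\tilde\psi_{-N}\in\cD(0,\infty)$, so that $h_j=2^{-jN\nu_\cR}\tilde\psi_{-N}(2^{j\nu_\cR}\cR)(\cR^{-N}g)$, and uses that powers of $\cR$ are controlled by those of $\cS$ on $g$ by Proposition \ref{prop_powercR1cR2}; for $j\to+\infty$ (low $\cR$-frequency) one inserts a positive power $\cR^N$ instead, exploiting that $\cR^N g\in\cS(G)$. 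In each case the weighted $L^1$-norm of the relevant kernel of $\cR$ is controlled through Theorem \ref{thm_hula} and the kernel dilation formula \eqref{eq_homogeneity_kernel_multipliers}, producing geometric decay of arbitrarily high order once $N$ is taken large enough. This is the technical heart of the proposition.
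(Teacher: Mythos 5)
Your architecture is essentially the paper's: reduce to weighted $L^2$ estimates on $G$, insert a resolution of the identity built from dilates of a cut-off in $\cR$, rescale each piece with Lemma \ref{lem_dilation_Hs}, peel off Schwartz kernels with the weighted Young inequality (Lemma \ref{lem_L1omegas}), and reduce everything to the summability over $j\in\bZ$ of a geometric weight times $\|\omega_s\,\psi(2^{j\nu_\cR}\cR)\,\zeta(\cS)\delta_0\|_{L^1}$. Your Step 1 (change of cut-off for fixed $\cR$ via a finite partition $\zeta=\sum_k\eta(r_k^{\nu_\cR}\cdot)\psi_k$) is correct and is a cleaner, more modular treatment than the paper, which does not separate the two invariances.

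The gap is in the "main obstacle" paragraph, which is exactly the content of the paper's technical Lemma \ref{lem_technical_pf_prop_Hslu_norm}, and your proposed resolution does not close it. First, the signs are reversed: for $j\to-\infty$ (where $\psi(2^{j\nu_\cR}\cdot)$ localizes $\cR$ near $2^{-j\nu_\cR}\to\infty$) writing $\psi(\lambda)=\lambda^{-N}\tilde\psi_{-N}(\lambda)$ gives $h_j=2^{-jN\nu_\cR}\tilde\psi_{-N}(2^{j\nu_\cR}\cR)(\cR^{-N}g)$ with prefactor $2^{|j|N\nu_\cR}\to\infty$, i.e.\ geometric growth; the decaying factorization in that regime is the one with $\cR^{N}g$ (which is fine, since $\cR^N g\in\cS(G)$). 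More seriously, after swapping, the low-$\cR$-frequency tail requires $\cR^{-N}g=\cR^{-N}\zeta(\cS)\delta_0$ to lie in $L^1(\omega_s)$ with controlled norm so that Lemma \ref{lem_L1omegas} applies; Proposition \ref{prop_powercR1cR2} only gives \emph{unweighted $L^2$} control of $\cR^{-N}\cS^{N\nu_\cR/\nu_\cS}$, and $\cR^{-N}$ does not commute with $\zeta(\cS)$, so one cannot transfer the fact that $g$ has $\cS$-spectrum away from $0$ into weighted $L^1$ decay of $\psi(2^{j\nu_\cR}\cR)g$ by this route. The paper's way around this is the auxiliary positive Rockland operator $\cR^{\nu_\cS}+\cS^{\nu_\cR}$: one proves one-sided spectral localization identities such as $E(-\infty,a^{\nu_\cR})E_\cS[a,\infty)=0$ and $E(a,\infty)E_\cS(-\infty,b]=0$ for $a>(1+C_o^2)b^{\nu_\cR}$ (using positivity and Proposition \ref{prop_powercR1cR2}), which force the dyadic pieces in the auxiliary operator to survive only at a single scale $\ell\approx j c\nu_\cR$ bounded below; the arbitrarily fast decay then comes from inserting powers of the \emph{auxiliary} operator into its own dyadic pieces. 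Without this (or an equivalent device handling the non-commutativity of $\cR$ and $\cS$), the summability of the series in your Step 2 is not established.
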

We have a similar result for the left case, 
and we denote by $H^s_{l.u.,R}(\Gh)$, resp. $H^s_{l.u.,L}(\Gh)$,
the space of measurable fields which are
 uniformly locally in $H^s(\Gh)$
on the right, respectively on the left.
Furthermore these spaces are Banach spaces
with the following properties:
\begin{corollary}
\label{cor_prop_Hslu_norm}
\begin{enumerate}
\item 
\label{item_cor_prop_Hslu_norm_banach}
If $s\geq 0$, 
the space $H^s_{l.u.,R}(\Gh)$ is a Banach space when equipped with any equivalent norm $\|\cdot\|_{H^s_{l.u.,R},\eta,\cR}$, 
where $\eta\in \cD(0,\infty)$ is non-zero and $\cR$ is a positive Rockland operator.
\item 
\label{item_cor_prop_Hslu_norm_inclusion}
We have the continuous inclusion
$$
H^{s_1}_{l.u.,R}(\Gh) \subset H^{s_2}_{l.u.,R}(\Gh),
\qquad  s_1\geq s_2.
$$
\item 
\label{item_cor_prop_Hslu_norm_dilation}
If $\sigma \in H^s_{l.u.,R}(\Gh)$ and $r_o>0$, 
then 
$\sigma\circ D_{r_o}\in H^s_{l.u.,R}(\Gh)$ satisfies
$$
\|\sigma \circ D_{r_o}\|_{H^s_{l.u.,R},\eta,\cR}
=
\|\sigma\|_{H^s_{l.u.,R},\eta(r_o^{-1}\cdot ),\cR}.
$$
\item 
\label{item_cor_prop_Hslu_norm_sob}
If $s>Q/2$, 
we have the continuous inclusion of Sobolev type:
$$
H^s_{l.u.,R} \subset L^\infty(\Gh)
$$
\end{enumerate}
We have similar statements for the left case.
\end{corollary}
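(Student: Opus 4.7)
The four items rest on the fixed-scale theory from Section \ref{sec_HsGh} combined with Proposition \ref{prop_Hslu_norm}, which makes the membership and norm independent (up to equivalence) of the choice of $(\eta,\cR)$.

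For \eqref{item_cor_prop_Hslu_norm_banach}, by Proposition \ref{prop_Hslu_norm} it is enough to check completeness for one fixed pair $(\eta,\cR)$. If $(\sigma_n)$ is Cauchy, then for each $r>0$ the slice $\{\sigma_n(r\cdot\pi)\eta(\pi(\cR))\}_n$ is Cauchy in the Hilbert space $H^s(\Gh)$ (Proposition \ref{prop_HsGh_hilbert}), uniformly in $r$, so it converges to a limit field $\tau^{(r)}$ with $\sup_r\|\tau^{(r)}\|_{H^s}<\infty$. To assemble these into a single $\sigma$, I would introduce a partition of unity $\{\eta_\ell\}_\ell\subset\cD(0,\infty)$ covering $(0,\infty)$, apply Proposition \ref{prop_Hslu_norm} with each cutoff $\eta_\ell$ so that $\sigma_n(\pi)\eta_\ell(\pi(\cR))$ converges in $L^2(\Gh)$, and define $\sigma$ by spectral summation. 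For \eqref{item_cor_prop_Hslu_norm_inclusion}, the inclusion is immediate from Lemma \ref{lem_HsGh_inclusion} applied slicewise with $\tau=\sigma(r\cdot\pi)\eta(\pi(\cR))$ followed by $\sup_{r>0}$.

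For \eqref{item_cor_prop_Hslu_norm_dilation}, unfolding the definition gives $(\sigma\circ D_{r_o})(r\cdot\pi)=\sigma((r_o r)\cdot\pi)$, so
$$
\|\sigma\circ D_{r_o}\|_{H^s_{l.u.,R},\eta,\cR}
=\sup_{r>0}\|\sigma(r_o r\cdot\pi)\eta(\pi(\cR))\|_{H^s}.
$$
Using $\eta(\pi(\cR))=\eta\bigl(r_o^{-\nu_\cR}(r_o\cdot\pi)(\cR)\bigr)$ from Lemma \ref{lem_rpiXalpha+kappa} and the change of variable $\pi\rightsquigarrow r_o\cdot\pi$ on $\Gh$ (whose effect on the $H^s$-norm is governed by Lemma \ref{lem_dilation_Hs}), the right-hand side can be rearranged into the claimed expression with cutoff $\eta(r_o^{-1}\cdot)$, after also absorbing $r$ into $r'=r_o r$. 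For \eqref{item_cor_prop_Hslu_norm_sob}, the slicewise Sobolev embedding of Lemma \ref{lem_HsGh_sob_embedding} yields
$$
\sup_{r>0,\,\pi\in\Gh}\|\sigma(r\cdot\pi)\eta(\pi(\cR))\|_{\sL(\cH_\pi)}\leq C\,\|\sigma\|_{H^s_{l.u.,R},\eta,\cR}.
$$
Substituting $\pi=r^{-1}\cdot\pi_0$ and using $(r^{-1}\cdot\pi_0)(\cR)=r^{-\nu_\cR}\pi_0(\cR)$ from Lemma \ref{lem_rpiXalpha+kappa} turns this into a bound on $\|\sigma(\pi_0)\eta(r^{-\nu_\cR}\pi_0(\cR))\|_{\sL(\cH_{\pi_0})}$ uniform in $r>0$ and $\pi_0$. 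Taking $\eta\geq 0$ with $\eta\equiv 1$ on some $[a,2a]\subset(0,\infty)$ then gives $\|\sigma(\pi_0)E_{\pi_0}([2^k a,2^{k+1}a))\|_{\sL}\leq C$ for every $k\in\bZ$; the discreteness of the spectrum of $\pi_0(\cR)$ in $(0,\infty)$ (Lemma \ref{lem_normL2_E(epsilon,infty)}) and spectral calculus are then invoked to combine these dyadic pieces into $\|\sigma(\pi_0)\|_{\sL(\cH_{\pi_0})}\leq C\,\|\sigma\|_{H^s_{l.u.,R},\eta,\cR}$.

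The main difficulty is the last step of \eqref{item_cor_prop_Hslu_norm_sob}: the ranges of the operators $\sigma(\pi_0)E_{\pi_0}([2^k a,2^{k+1}a))$ are not a priori mutually orthogonal, so a plain triangle inequality on $\sigma(\pi_0)v=\sum_k\sigma(\pi_0)E_{\pi_0}([2^k a,2^{k+1}a))v$ is not sufficient. One needs to exploit the freedom in the choice of $\eta$ afforded by Proposition \ref{prop_Hslu_norm} together with a Littlewood--Paley-type decomposition so that the uniformity in $r$ supplies the missing almost-orthogonality. The bookkeeping between the dilation on $\Gh$ and on the spectrum of $\cR$ in item \eqref{item_cor_prop_Hslu_norm_dilation} is the next most delicate point.
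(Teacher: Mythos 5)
Your overall route coincides with the paper's. Parts \eqref{item_cor_prop_Hslu_norm_banach} and \eqref{item_cor_prop_Hslu_norm_inclusion} are handled exactly as you describe: the paper localises the Cauchy sequence with the spectral projections $E_\pi[a,b]$ of $\pi(\cR)$ (rather than a partition of unity, but the idea is the same), extracts limits of the localised pieces, glues them by compatibility, and passes to the limit in the Cauchy condition; part \eqref{item_cor_prop_Hslu_norm_inclusion} is the slicewise application of Lemma \ref{lem_HsGh_inclusion}. For part \eqref{item_cor_prop_Hslu_norm_dilation} your computation is overdetermined: the substitution $r\mapsto r/r_o$ inside the supremum of \eqref{eq_def_HsluR_norm} already gives the invariance, since $(\sigma\circ D_{r_o})(r\cdot\pi)=\sigma((r_or)\cdot\pi)$ and $r_or$ ranges over all of $(0,\infty)$; performing in addition the change of variable $\pi\rightsquigarrow r_o\cdot\pi$ on $\Gh$ double-counts the dilation and, by Lemma \ref{lem_dilation_Hs}, would only yield an inequality with a factor of the type $(1+r_o)^sr_o^{-Q/2}$, not an equality.

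The genuine issue is the one you flag yourself in part \eqref{item_cor_prop_Hslu_norm_sob}. The paper's proof is precisely the chain of inequalities you wrote: with $\eta\equiv1$ on $[1,2]$ it obtains $\sup_{r>0}\sup_{\pi}\|\sigma(\pi)E_\pi[r^{-1},2r^{-1}]\|_{\sL(\cH_\pi)}\le C\|\sigma\|_{H^s_{l.u.,R},\eta,\cR}$ from Lemma \ref{lem_HsGh_sob_embedding}, and then concludes that $\sigma\in L^\infty(\Gh)$ with no further argument. Your objection is correct and the step is not automatic: uniform operator-norm bounds on the pieces $\sigma(\pi)E_\pi[a,2a]$ do not by themselves bound $\|\sigma(\pi)\|_{\sL(\cH_\pi)}$ (a rank-one $\sigma(\pi)=\langle\cdot,w\rangle e_0$ with $w$ spread over $N$ well-separated spectral bands has every piece of norm at most $1$ but total norm $\sqrt N$). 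So neither your sketch nor the paper's text closes this step as written. To actually close it one must use more than the operator norms of the pieces: the dyadic kernels $\kappa_j=\cF_G^{-1}\{\sigma(2^{-j}\cdot\pi)\eta(\pi(\cR))\}$ have uniformly bounded weighted $L^1$ norms (Lemma \ref{lem_HsGh_sob_embedding} with Corollary \ref{cor_prop_polar_coord}) and vanishing integral (the remark in Section \ref{subsec_pf_thm_main}, because $\eta$ vanishes near $0$); combined with the mean-value estimate of Lemma \ref{lem_L1mv} this gives $\|T_j^*T_k\|_{\sL(L^2(G))}\lesssim2^{-\epsilon|j-k|}$, while $T_jT_k^*=0$ for $|j-k|$ large since the spectral cut-offs have disjoint supports, and the Cotlar--Stein lemma then yields $\|\sigma\|_{L^\infty(\Gh)}=\|T_\sigma\|_{\sL(L^2(G))}\lesssim\|\sigma\|_{H^s_{l.u.,R},\eta,\cR}$. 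That is the almost-orthogonality you were looking for; it is not supplied by the choice of $\eta$ alone.
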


This corollary will  be proved in 
Section \ref{subsec_pf_cor_prop_Hslu_norm}.
We can already point out that taking the adjoint provides the link between the left and right cases:
\begin{lemma}
\label{lem_Hslu_LR}
Let  $\sigma=\{\sigma(\pi),\pi\in \Gh\}$ be
a $\mu$-measurable field of operators and $s\geq 0$.
Then 
$$
\sigma \in 
H^s_{l.u.,R}(\Gh)
\Longleftrightarrow
\sigma^* \in 
H^s_{l.u.,L}(\Gh),
$$
and in this case 
$$
\|\sigma\|_{H^s_{l.u.,R},\eta,\cR}
=
\|\sigma^*\|_{H^s_{l.u.,L},\eta,\cR}.
$$
We can reverse the r\^ole of left and right.
\end{lemma}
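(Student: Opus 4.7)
The plan is to reduce everything to the adjoint-invariance of the $H^s(\Gh)$-norm recorded in \eqref{eq_sigma*Hs}. Fix a positive Rockland operator $\cR$, a non-zero function $\eta\in\cD(0,\infty)$, and a parameter $r>0$, and let $\tau_r$ denote the measurable field
$$
\tau_r(\pi):=\sigma(r\cdot\pi)\,\eta(\pi(\cR)),\qquad \pi\in\Gh.
$$
Its $H^s(\Gh)$-norm is exactly the quantity appearing in the supremum defining $\|\sigma\|_{H^s_{l.u.,R},\eta,\cR}$.

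Next I would compute its adjoint field. Because $\eta$ is real-valued and $\pi(\cR)$ admits a self-adjoint extension on $\cH_\pi$ (see Section \ref{subsec_rockland}), the spectral theorem guarantees that $\eta(\pi(\cR))$ is itself self-adjoint; hence
$$
\tau_r(\pi)^*=\eta(\pi(\cR))^*\,\sigma(r\cdot\pi)^*=\eta(\pi(\cR))\,\sigma^*(r\cdot\pi),
$$
which is precisely the integrand in the supremum defining $\|\sigma^*\|_{H^s_{l.u.,L},\eta,\cR}$. Applying \eqref{eq_sigma*Hs} to $\tau_r$ (after fixing a symmetric quasi-norm and taking $\omega_s=(1+|\cdot|)^s$ as weight) yields $\|\tau_r\|_{H^s}=\|\tau_r^*\|_{H^s}$ for every $r>0$. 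Taking the supremum in $r>0$ on both sides then delivers
$$
\|\sigma\|_{H^s_{l.u.,R},\eta,\cR}=\|\sigma^*\|_{H^s_{l.u.,L},\eta,\cR}.
$$
In particular one side is finite if and only if the other is, so $\sigma\in H^s_{l.u.,R}(\Gh)$ iff $\sigma^*\in H^s_{l.u.,L}(\Gh)$. The reverse direction (interchanging the roles of left and right) follows verbatim upon replacing $\sigma$ by $\sigma^*$ and using the involution $(\sigma^*)^*=\sigma$.

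There is essentially no serious obstacle here. The only points requiring a moment of care are that $\eta(\pi(\cR))$ is well-defined and self-adjoint for $\mu$-almost every $\pi$, which is guaranteed by the self-adjointness of $\pi(\cR)$ as a positive Rockland operator together with the real-valuedness of the cut-off $\eta$; and that the operator product $\sigma(r\cdot\pi)\eta(\pi(\cR))$ makes sense as a measurable field in $L^2(\Gh)$, which is automatic as soon as the relevant supremum is assumed finite.
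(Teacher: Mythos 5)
Your proof is correct and follows exactly the route the paper takes: the paper disposes of this lemma in one line by observing that it "follows readily from \eqref{eq_sigma*Hs}", which is precisely the adjoint-invariance of the $H^s(\Gh)$-norm that you apply to the field $\tau_r(\pi)=\sigma(r\cdot\pi)\,\eta(\pi(\cR))$ before taking the supremum in $r$. Your explicit caveat that $\eta$ should be taken real-valued (so that $\eta(\pi(\cR))$ is self-adjoint and the same cut-off appears on both sides) is consistent with the paper's standing convention of reducing to real $\eta$ by splitting into real and imaginary parts.
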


 Lemma \ref{lem_Hslu_LR} follows readily from
\eqref{eq_sigma*Hs}.

\medskip

The following statement gives sufficient conditions for the membership 
in $H^s_{l.u.,R}(\Gh)$ and $H^s_{l.u.,L}(\Gh)$.
\begin{proposition}
\label{prop_Hslu_suff_cond}
Let  $\sigma=\{\sigma(\pi),\pi\in \Gh\}$ be
a $\mu$-measurable field of operators and $s\geq 0$.
Let $\cR$ be a positive Rockland operator 
and let $\eta\in \cD(0,\infty)$ be non-zero.

\begin{description}
\item[(Left)] 
If   $\pi(\cR)^{\frac{[\alpha]}\nu}\Delta^\alpha \sigma \in L^\infty(\Gh)$ for all $|\alpha|\leq N$ for some positive integer $N\in \bN$ divisible by $\upsilon_1,\ldots,\upsilon_n$,
then $\sigma \in H^N_{l.u.,L}(\Gh)$ 
and 
$$
\|\sigma\|_{H^N_{l.u.,L},\eta,\cR}
\leq 
C
 \sum_{[\alpha]\leq N}
 \sup_{\pi\in \Gh}
 \|\pi(\cR)^{\frac{[\alpha]}\nu}\Delta^\alpha\sigma(\pi)\|_{\sL(\cH_\pi)},
$$
where the constant $C>0$ does not depend on $\sigma$.
\item[(Right)] 
If   $\Delta^\alpha \sigma\ \pi(\cR)^{\frac{[\alpha]}\nu} \in L^\infty(\Gh)$ for all $|\alpha|\leq N$ for some positive integer $N\in \bN$ divisible by $\upsilon_1,\ldots,\upsilon_n$,
then $\sigma \in H^N_{l.u.,R}(\Gh)$ 
and 
$$
\|\sigma\|_{H^N_{l.u.,R},\eta,\cR}
\leq 
C
 \sum_{[\alpha]\leq N}
 \sup_{\pi\in \Gh}
 \|\Delta^\alpha\sigma(\pi)\ \pi(\cR)^{\frac{[\alpha]}\nu}\|_{\sL(\cH_\pi)},
$$
where the constant $C>0$ does not depend on $\sigma$.
\end{description}
\end{proposition}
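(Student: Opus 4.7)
My plan is to focus on the Left case; the Right case follows from Lemma \ref{lem_Hslu_LR} (the hypothesis transfers, up to the change of polynomial induced by group inversion, from $\sigma$ to $\sigma^*$). Since $N$ is divisible by each $\upsilon_j$, Lemma \ref{lem_HsGh_integer} gives
\[
\|\eta(\pi(\cR))\sigma(r\cdot\pi)\|_{H^N}\asymp\sum_{[\alpha]\le N}\|\Delta^\alpha[\eta(\pi(\cR))\sigma(r\cdot\pi)]\|_{L^2(\Gh)},
\]
so it is enough to bound each summand uniformly in $r>0$.

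Expanding by the Leibniz rule \eqref{eq_Leibniz_rule_sigma} and using Lemma \ref{lem_rpiXalpha+kappa} (from $\Delta^{\alpha_2}[\sigma(r\cdot\pi)]=r^{[\alpha_2]}(\Delta^{\alpha_2}\sigma)(r\cdot\pi)$ and $(r\cdot\pi)(\cR)=r^\nu\pi(\cR)$) I rewrite
\[
r^{[\alpha_2]}(\Delta^{\alpha_2}\sigma)(r\cdot\pi)=\pi(\cR)^{-[\alpha_2]/\nu}\,\tau_{\alpha_2}(r\cdot\pi),\qquad \tau_{\alpha_2}(\pi):=\pi(\cR)^{[\alpha_2]/\nu}\Delta^{\alpha_2}\sigma(\pi),
\]
which by hypothesis lies in $L^\infty(\Gh)$ (since $|\alpha_2|\le[\alpha_2]\le N$), with $\|\tau_{\alpha_2}(r\cdot\pi)\|_{L^\infty(\Gh)}=\|\tau_{\alpha_2}\|_{L^\infty(\Gh)}$ independent of $r$. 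The inequality $\|\rho\mu\|_{L^2(\Gh)}\le\|\rho\|_{L^2(\Gh)}\|\mu\|_{L^\infty(\Gh)}$ (itself a consequence of $\|AB\|_{\HS}\le\|A\|_{\HS}\|B\|_{\mathrm{op}}$) then dominates each Leibniz summand by
\[
\|\Delta^{\alpha_1}[\eta(\pi(\cR))]\pi(\cR)^{-[\alpha_2]/\nu}\|_{L^2(\Gh)}\cdot\|\tau_{\alpha_2}\|_{L^\infty(\Gh)},
\]
whose second factor is exactly the hypothesis quantity.

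The remaining step --- bounding $\|\Delta^{\alpha_1}[\eta(\pi(\cR))]\pi(\cR)^{-[\alpha_2]/\nu}\|_{L^2(\Gh)}$ by a constant depending only on $\eta,\cR,\alpha_1,[\alpha_2]$ --- is the \textbf{main obstacle}. The strategy is to pick $\tilde\eta\in\cD(0,\infty)$ with $\tilde\eta\equiv1$ on $\supp\,\eta$ and set $\zeta(\lambda):=\lambda^{-[\alpha_2]/\nu}\eta(\lambda)$ and $\tilde\zeta(\lambda):=\lambda^{-[\alpha_2]/\nu}\tilde\eta(\lambda)$, both in $\cD(0,\infty)$. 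By Hulanicki's theorem (\ref{thm_hula}) the kernels $\eta(\cR)\delta_0$, $\tilde\eta(\cR)\delta_0$, $\zeta(\cR)\delta_0$, $\tilde\zeta(\cR)\delta_0$ all belong to $\cS(G)$. The identity $\zeta=\eta\tilde\zeta$ plus the Leibniz rule gives
\[
\Delta^{\alpha_1}[\eta(\pi(\cR))]\tilde\zeta(\pi(\cR))=\Delta^{\alpha_1}[\zeta(\pi(\cR))]-\sum_{\gamma_2\ne 0,\,[\gamma_1]+[\gamma_2]=[\alpha_1]}c_{\gamma_1,\gamma_2}\Delta^{\gamma_1}[\eta(\pi(\cR))]\Delta^{\gamma_2}[\tilde\zeta(\pi(\cR))],
\]
every right-hand-side term having Schwartz convolution kernel (using closure of $\cS(G)$ under convolution and multiplication by polynomials), so the left-hand side lies in $L^2(\Gh)$. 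Since $\tilde\zeta(\pi(\cR))=\pi(\cR)^{-[\alpha_2]/\nu}\tilde\eta(\pi(\cR))$ and $\tilde\eta\equiv1$ on $\supp\,\eta$, a parallel Leibniz expansion transfers this bound back to the full expression involving $\tau_{\alpha_2}(r\cdot\pi)$, closing the estimate. The difficulty is genuine: $\Delta^{\alpha_1}$ does not commute with spectral cutoffs in $\pi(\cR)$, so the unbounded operator $\pi(\cR)^{-[\alpha_2]/\nu}$ cannot simply be replaced by its compactly supported modification $\tilde\zeta(\pi(\cR))$, and it is exactly the Leibniz bridge between $\Delta^{\alpha_1}[\zeta(\pi(\cR))]$ and $\Delta^{\alpha_1}[\eta(\pi(\cR))]\tilde\zeta(\pi(\cR))$, together with Hulanicki's theorem for scalar spectral multipliers in $\cD(0,\infty)$, that supplies the needed algebraic identity.
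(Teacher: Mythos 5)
Your overall skeleton is the paper's, mirrored: Lemma \ref{lem_HsGh_integer}, the Leibniz rule, the dilation identities of Lemma \ref{lem_rpiXalpha+kappa}, insertion of $\pi(\cR)^{\mp[\alpha_2]/\nu}$, and the pointwise bound $\|AB\|_{\HS}\leq\|A\|_{\HS}\|B\|_{\mathrm{op}}$ are all exactly the paper's moves (the paper does the Right case directly and gets the Left case by adjoints; you do the reverse). The difference is in the one step you yourself flag as the main obstacle, and there your argument does not close. Your Leibniz bridge controls only
$\Delta^{\alpha_1}[\eta(\pi(\cR))]\,\tilde\zeta(\pi(\cR))
=\Delta^{\alpha_1}[\eta(\pi(\cR))]\,\pi(\cR)^{-[\alpha_2]/\nu}\,\tilde\eta(\pi(\cR))$,
whereas you need $\Delta^{\alpha_1}[\eta(\pi(\cR))]\,\pi(\cR)^{-[\alpha_2]/\nu}$ itself. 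The discrepancy is
$\Delta^{\alpha_1}[\eta(\pi(\cR))]\,\pi(\cR)^{-[\alpha_2]/\nu}\bigl(1-\tilde\eta(\pi(\cR))\bigr)$,
and since $\tilde\eta\in\cD(0,\infty)$ vanishes near $0$, the scalar function $\lambda^{-[\alpha_2]/\nu}(1-\tilde\eta(\lambda))$ blows up as $\lambda\to0^+$; the corresponding field is unbounded at the bottom of the spectrum and is not reachable by Hulanicki's theorem for multipliers in $\cD(0,\infty)$. Because $\Delta^{\alpha_1}$ destroys spectral localisation (as you correctly note), no further compactly supported cut-off can absorb this piece, so the "parallel Leibniz expansion transfers this bound back" sentence hides an unproved, and in this form unprovable, claim.

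The ingredient you are missing is the first bullet of Proposition \ref{prop_powercR1cR2}: $\cS(G)$ lies in the domain of $\cR^{-a}$ for every $a$. The paper uses it verbatim in the Right case, where after inserting powers the awkward factor is $\pi(\cR)^{-[\alpha_1]/\nu}\Delta^{\alpha_2}[\eta(\pi(\cR))]=\cF_G\bigl(\cR^{-[\alpha_1]/\nu}g_{\alpha_2}\bigr)$ with $g_{\alpha_2}=x^{\alpha_2}\eta(\cR)\delta_0\in\cS(G)$ by Hulanicki, hence an explicit finite constant in $L^2(\Gh)$; the Left case is then deduced by adjoints via Lemma \ref{lem_Hslu_LR}. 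If you insist on doing the Left case directly, the clean way to finish your step is the pointwise adjoint identity
$\bigl\|\widehat{g}(\pi)\,\pi(\cR)^{-a}\bigr\|_{\HS(\cH_\pi)}
=\bigl\|\pi(\cR)^{-a}\,\widehat{g^{\,*}}(\pi)\bigr\|_{\HS(\cH_\pi)}$
with $g^*(x)=\overline{g(x^{-1})}\in\cS(G)$, which after integrating in $\mu$ reduces the quantity to $\|\cR^{-a}(g^{*})\|_{L^2(G)}<\infty$ by Proposition \ref{prop_powercR1cR2}. With that substitution your proof becomes correct and is essentially the paper's.
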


 Proposition \ref{prop_Hslu_suff_cond} will be shown 
 in Section \ref{subsec_pf_prop_Hslu_suff_cond}.
Note that in this statement above, 
the meaning of $\Delta^\alpha\sigma(\pi)$ requires the slightly more general definition of difference operator alluded to in Remark
\ref{rem_def_Delta}.

\begin{remark}
\label{rem_prop_Hslu_suff_cond}
The suprema in Proposition \ref{prop_Hslu_suff_cond}
are independent of a choice of a positive Rockland operator, 
see Propositions \ref{prop_powercR1cR2}
and \ref{prop_Hslu_norm}.
Moreover, the condition described in Proposition \ref{prop_Hslu_suff_cond}
is invariant under dilation by Part \eqref{item_cor_prop_Hslu_norm_dilation}
of Corollary \ref{cor_prop_Hslu_norm}
and for  the suprema involved in Proposition \ref{prop_Hslu_suff_cond}, 
 by Lemma \ref{lem_rpiXalpha+kappa} and the following calculations:
 \begin{eqnarray*}
&&\pi(\cR)^{\frac{[\alpha]}\nu}\Delta^\alpha( \sigma\circ D_{r_o}) (\pi)
=
{r_o}^{[\alpha]} \ \pi(\cR)^{\frac{[\alpha]}\nu} \Delta^\alpha( \sigma) ({r_o}\cdot \pi)
\\
&&\qquad
=({r_o}\cdot \pi)(\cR)^{\frac{[\alpha]}\nu} \Delta^\alpha( \sigma) ({r_o}\cdot \pi)
=
\pi_1(\cR)^{\frac{[\alpha]}\nu}\Delta^\alpha \sigma  (\pi_1),
\end{eqnarray*}
with $\pi_1={r_o}\cdot \pi$.
Therefore
\begin{equation}
\label{eq_prop_Hslu_suff_cond_dilation}
\sup_{\pi\in \Gh}
 \|\pi(\cR)^{\frac{[\alpha]}\nu}\Delta^\alpha(\sigma\circ D_{r_o}) (\pi)\|_{\sL(\cH_\pi)}
 =
 \sup_{\pi_1\in \Gh}
 \|\pi_1(\cR)^{\frac{[\alpha]}\nu}\Delta^\alpha\sigma(\pi_1)\|_{\sL(\cH_{\pi_1})}
 .
\end{equation}
\end{remark}

\subsection{The main result}

The main result of this article is Theorem \ref{thm_main0}
which we now rephrase as:
\begin{theorem}
\label{thm_main}
Let $G$ be a graded nilpotent Lie group.
If  $\sigma=\{\sigma(\pi),\pi\in \Gh\}\in H^s_{l.u.,R}(\Gh) \cap H^s_{l.u.,L}(\Gh) $
for some $s>Q/2$
then the corresponding operator $T=T_\sigma$ is bounded on $L^p(G)$ for any $1<p<\infty$.
Furthermore, 
$$
\|T\|_{\sL(L^p(G))}
\leq C 
\max\left(\|\sigma\|_{H^s_{l.u.,R},\eta,\cR},\|\sigma\|_{H^s_{l.u.,L},\eta,\cR}\right),
$$
where $C>0$ is a constant independent of $\sigma$
but may depend on $p,s,G$ and a choice of $\eta \in \cD(0,\infty)$ and a
positive Rockland operator $\cR$.
\end{theorem}

By Proposition \ref{prop_Hslu_suff_cond},
 Theorem \ref{thm_main} implies 
Theorem \ref{thm_intro}.

The hypotheses and the conclusion of Theorems \ref{thm_main} and  \ref{thm_intro} are 
`dilation-invariant' and do not depend of a choice  of a Rockland operator
or a function $\eta$, 
see Remark \ref{rem_prop_Hslu_suff_cond} and Corollary \ref{cor_prop_Hslu_norm}.

Theorem \ref{thm_main} is proved in Section \ref{subsec_pf_thm_main}
and its proof yields  the following more precise version:
\begin{corollary}
\label{cor_thm_main}
Let $G$ be a graded nilpotent Lie group.
Let  $\sigma=\{\sigma(\pi),\pi\in \Gh\}$ be
a $\mu$-measurable field of operators in $L^2(\Gh)$
and let $T_\sigma$ be the corresponding Fourier multiplier operator on $\cS(G)$.
\begin{enumerate}
\item 
If $\sigma$ is in $H^s_{l.u.,R}$ or $H^s_{l.u.,L}$
for some $s>Q/2$, 
then $T$ is bounded on $L^2(G)$
with 
$$
\|T\|_{\sL(L^2(G))}= \sup_{\pi\in \Gh} \|\sigma(\pi)\|_{op}
\leq C_2 
\left\{\begin{array}{l}
\|\sigma\|_{H^s_{l.u.,R}}\\
\mbox{or}\\
\|\sigma\|_{H^s_{l.u.,L}}\\
\end{array}\right. ,
$$
respectively, and $C_2$ a constant independent of $\sigma$.
\item 
If $\sigma\in H^s_{l.u.,R}$
for some $s>Q/2$ 
then $T$ is of weak-type $L^1$.
Moreover there exists a constant $C_1>0$ independent of $\sigma$, such that
$$
\forall f\in \cS(G)\quad \forall \alpha>0\quad
|\{x: |Tf(x)|>\alpha\} \leq C_1\frac { \|\sigma\|_{H^s_{l.u.,R}}} {\alpha} 
\|f\|_{L^1(G)}.
$$
For each $p\in (1,2)$, 
there exists a constant $C_p>0$ independent of $\sigma$, such that
$$
\forall f\in \cS(G)\quad 
\|Tf\|_{L^p}\leq C_p\|\sigma\|_{H^s_{l.u.,R}} \|f\|_{L^p(G)}.
$$

\item 
If $\sigma\in H^s_{l.u.,L}$
for some $s>Q/2$ 
then $T^*$ is of weak-type $L^1$.
Moreover there exists a constant $C_1>0$ independent of $\sigma$, such that
$$
\forall f\in \cS(G)\quad \forall \alpha>0\quad
|\{x: |T^*f(x)|>\alpha\} \leq C_1  \frac {\|\sigma\|_{H^s_{l.u.,L}}} {\alpha} 
\|f\|_{L^1(G)}.
$$
For each $p\in (2,\infty)$, 
there exists a constant $C_p>0$ independent of $\sigma$, such that
$$
\forall f\in \cS(G)\quad 
\|Tf\|_{L^p}\leq C_p\|\sigma\|_{H^s_{l.u.,L}} \|f\|_{L^p(G)}.
$$
\end{enumerate}
\end{corollary}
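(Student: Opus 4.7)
The strategy is Calderón--Zygmund theory on the space of homogeneous type $(G, dx, |\cdot|)$ applied to the convolution kernel of $T_\sigma$, after a dyadic decomposition of $\sigma$ via the spectral calculus of a positive Rockland operator $\cR$ of degree $\nu$. Item (1) is immediate from Plancherel, which gives $\|T_\sigma\|_{\sL(L^2(G))} = \|\sigma\|_{L^\infty(\Gh)}$, together with the Sobolev-type embedding $H^s_{l.u.,R}(\Gh), H^s_{l.u.,L}(\Gh) \hookrightarrow L^\infty(\Gh)$ for $s > Q/2$ established in part (\ref{item_cor_prop_Hslu_norm_sob}) of Corollary \ref{cor_prop_Hslu_norm}.

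For item (2), fix $\eta \in \cD(0,\infty)$ realising a dyadic partition of unity $\sum_{j\in\bZ}\eta(2^{-j\nu}\lambda) = 1$ on $(0,\infty)$ and decompose $\sigma = \sum_j \sigma_j$ with $\sigma_j(\pi) := \sigma(\pi)\,\eta(2^{-j\nu}\pi(\cR))$. By Lemma \ref{lem_rpiXalpha+kappa} the dilated symbol $\pi \mapsto \sigma_j(2^j\cdot\pi) = \sigma(2^j\cdot\pi)\,\eta(\pi(\cR))$ has $H^s(\Gh)$-norm uniformly bounded in $j$ by $\|\sigma\|_{H^s_{l.u.,R},\eta,\cR}$. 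Set $\kappa_j := \cF_G^{-1}\sigma_j$ and $\tilde\kappa_j := \cF_G^{-1}(\sigma_j \circ D_{2^j})$, so that $\kappa_j(x) = 2^{jQ}\tilde\kappa_j(2^j\cdot x)$; Hulanicki's theorem (Theorem \ref{thm_hula}) ensures the pieces are Schwartz, and for any $0 < \epsilon < s - Q/2$, Corollary \ref{cor_prop_polar_coord} turns the $H^s$ control of $\sigma_j \circ D_{2^j}$ into a weighted $L^1$-bound $\|(1+|\cdot|)^\epsilon \tilde\kappa_j\|_{L^1(G)} \lesssim \|\sigma\|_{H^s_{l.u.,R},\eta,\cR}$ uniformly in $j$.

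The core step is the verification of the Hörmander integral condition
\begin{equation*}
\sup_{h\neq 0}\,\int_{|u|>C|h|}\bigl|\kappa(u) - \kappa(h^{-1}u)\bigr|\,du \;<\; \infty
\end{equation*}
for $\kappa = \sum_j \kappa_j$, which is precisely the standard Hörmander condition on the integral kernel $K(x,z) = \kappa(z^{-1}x)$ of $T_\sigma$. The contribution of each $\kappa_j$ is estimated via two competing bounds. When $2^j|h| \gtrsim 1$, Markov applied to the weighted $L^1$-bound on $\tilde\kappa_j$ yields $\int_{|x|>C|h|}|\kappa_j(x)|\,dx \leq C(2^j|h|)^{-\epsilon}$. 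When $2^j|h| \lesssim 1$, one invokes the mean-value estimate of Lemma \ref{lem_L1mv}
\begin{equation*}
\|\kappa_j - \kappa_j(h\,\cdot)\|_{L^1(G)} \;\leq\; C \sum_\ell |h|^{\upsilon_\ell}\|\tilde X_\ell \kappa_j\|_{L^1(G)},
\end{equation*}
where $\tilde X_\ell$ is the right-invariant vector field corresponding to $X_\ell$. Since $\widehat{\tilde X_\ell \kappa_j}(\pi) = \sigma_j(\pi)\pi(X_\ell)$, with $\pi(X_\ell)$ appearing on the right, Proposition \ref{prop_powercR1cR2} allows replacement of $\pi(X_\ell)$ by $\pi(\cR)^{\upsilon_\ell/\nu}$ modulo bounded factors, and the same Sobolev argument applied to $\sigma(2^j\cdot\pi)\eta(\pi(\cR))\pi(X_\ell)$ (still controlled by the \emph{right}-uniform hypothesis) gives $\|\tilde X_\ell \kappa_j\|_{L^1(G)} \lesssim 2^{j\upsilon_\ell}\|\sigma\|_{H^s_{l.u.,R},\eta,\cR}$. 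Splitting the sum over $j$ at $2^j|h| \asymp 1$ produces geometric series on both sides of the threshold and yields a uniform bound.

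The Hörmander integral condition combined with the $L^2$-bound from item (1) gives, by the Calderón--Zygmund theory on spaces of homogeneous type \cite{coifman+weiss}, the weak-$(1,1)$ bound of item (2), and Marcinkiewicz interpolation with $L^2$ yields $L^p$ for $p\in(1,2)$. For item (3), Lemma \ref{lem_Hslu_LR} shows $\sigma^*\in H^s_{l.u.,R}(\Gh)$ with equal norm whenever $\sigma\in H^s_{l.u.,L}(\Gh)$; since $T_\sigma^* = T_{\sigma^*}$, item (2) applied to $T_{\sigma^*}$ gives weak-$(1,1)$ and $L^{p'}$-bounds on $T_\sigma^*$, and duality returns the $L^p$-bound of $T_\sigma$ for $p\in(2,\infty)$. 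The main obstacle throughout is the kernel estimate in paragraph three: the one-sided, right-uniform hypothesis must be paired with the right-invariant fields $\tilde X_\ell$ of Lemma \ref{lem_L1mv}, which correspond to the left-translation version of the Hörmander condition --- precisely the condition yielding weak-$(1,1)$ of $T_\sigma$ (and symmetrically for the left-uniform hypothesis and $T_\sigma^*$), which is exactly why the two one-sided hypotheses produce two genuinely different conclusions.
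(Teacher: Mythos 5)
Your plan is correct and follows essentially the same route as the paper: the same dyadic spectral decomposition $\sigma_j(\pi)=\sigma(2^{-j}\cdot\pi)\eta(\pi(\cR))$ (up to renormalising the dyadic scale), the same weighted $L^1$/$L^2$ bounds via Corollary \ref{cor_prop_polar_coord}, the same two-case kernel estimate (tail decay versus the $L^1$-mean value inequality with the right-invariant fields $\tilde X_\ell$, correctly paired with the \emph{right}-uniform hypothesis), and the same passage through Calder\'on--Zygmund theory on spaces of homogeneous type, interpolation, and duality via $T^*=T_{\sigma^*}$ and Lemma \ref{lem_Hslu_LR}. Two minor imprecisions: the pieces $\kappa_j$ need not be Schwartz since $\sigma$ is only in $L^2(\Gh)$ (Hulanicki applies to the auxiliary kernels $\eta(\cR)\delta_0$, not to $\kappa_j$), and the bound $\|\tilde X_\ell\kappa_j\|_{L^1}\lesssim 2^{j\upsilon_\ell}\|\sigma\|_{H^s_{l.u.,R}}$ is obtained in the paper not from Proposition \ref{prop_powercR1cR2} (an $L^2(G)$ operator bound) but by inserting $\omega(\pi(\cR))\pi(X_\ell)=\widehat g_\ell$ with $g_\ell$ Schwartz by Hulanicki and applying the $H^s(\Gh)$-algebra inequality \eqref{eq_cq_lem_L1omegas}.
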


In the statement above, $\|\sigma\|_{H^s_{l.u.,R}} $
denotes a choice of norms $\|\sigma\|_{H^s_{l.u.,R},\cR,\eta} $
and similarly of the left case.
The constants in the statement depends on this choice.

\section{Proofs}
\label{sec_proofs}

Here we give the proofs of earlier statements:
Proposition \ref{prop_Hslu_norm} in Section \ref{subsec_pf_prop_Hslu_norm}, 
 Corollary \ref{cor_prop_Hslu_norm}
 in Section \ref{subsec_pf_cor_prop_Hslu_norm},
 Proposition \ref{prop_Hslu_suff_cond}
 in Section \ref{subsec_pf_prop_Hslu_suff_cond}
 
 \subsection{Proof of Proposition \ref{prop_Hslu_norm} }
\label{subsec_pf_prop_Hslu_norm}

Let $\eta$ and $\cR$ be fixed as in Proposition \ref{prop_Hslu_norm}.
We may assume $\eta$ real valued
(otherwise we consider separately $\Re \eta$ and $\Im \eta$).
Let $c_o>0$ such that  $2^{c_o}I$ intersects $I$
where  $I$ is an open interval inside the support of $\eta$.
For $\lambda\in \bR$ and $j\in \bZ$, we set
$$
\eta_j(\lambda) =\eta(2^{-c_o j} \lambda)
\quad\mbox{and}\quad
\alpha(\lambda):=\sum_{j\in \bZ} \eta_j^2(\lambda).
$$
One checks easily that $\alpha$ is constantly 0 on $(-\infty,0]$ and  that it is smooth and valued in $(0,+\infty)$ on $(0,+\infty)$.
Furthermore, 
$$
\forall \lambda\in \bR, \ j\in \bZ\quad
\alpha(2^{jc_o} \lambda)=\alpha(\lambda),
\qquad\mbox{and}\qquad
\forall \lambda>0\quad
\sum_{j\in \bZ} \frac{\eta_j^2}\alpha (\lambda)
=
1,
$$
and $\id_{\cH_\pi}=\sum_{j\in \bZ} \frac{\eta_j^2}\alpha (\pi(\cR))$
with convergence in the Strong Operator Topology of  $\sL(\cH_\pi)$.
Hence, we have
$$
\|\sigma(r\cdot \pi) \zeta(\pi(\cS))\|_{H^s}
\leq
\sum_{j\in \bZ} E_j
\qquad\mbox{where}\qquad
E_j:=\|\sigma(r\cdot \pi) \frac{\eta_j^2}\alpha (\pi(\cR))  \zeta(\pi(\cS))\|_{H^s}.
$$

\noindent \textbf{Case $j\geq 0$:}
By \eqref{eq_cq_lem_L1omegas}, we have 
$$
E_j \lesssim \|\sigma(r\cdot \pi) \eta_j(\pi(\cR))\|_{H^s}
\|\cF_G^{-1}\frac{\eta_j}\alpha (\pi(\cR))  \zeta(\pi(\cS))\|_{L^1(\omega_s)}.
$$
Lemma \ref{lem_rpiXalpha+kappa} yields
$\|\sigma(r\cdot \pi) \eta_j(\pi(\cR))\|_{H^s} \lesssim 2^{jc_0 Q/(2\nu_\cR)} \|\sigma\|_{H^s_{l.u.,R},\eta,\cR}$, while
by Lemma \ref{lem_L1omegas}
the $L^1(\omega_s)$-norm is 
\begin{eqnarray*}
&\lesssim
\|\cF_G^{-1}\left\{\frac{\eta_j}{\alpha }(\pi(\cR))  \pi(\cR)^{-N}\right\}\|_{L^1(\omega_s)}
\|\cF_G^{-1}\left\{ \pi(\cR)^N  \zeta(\pi(\cS))\right\}\|_{L^1(\omega_s)}
\\
&=
2^{-jc_0N}\|\frac{\lambda^{-N} \eta_0}\alpha (2^{-jc_0}\cR)  \delta_0 \|_{L^1(\omega_s)}
\|\cR^N  \zeta(\cS)\delta_0\|_{L^1(\omega_s)}
\end{eqnarray*}
for a suitable positive integer $N$.
By Hulanicki's Theorem (Theorem \ref{thm_hula}), 
$\zeta(\cS)\delta_0 \in \cS(G)$ so the second $L^1(\omega_s)$-norm is finite, and the first one is 
$\lesssim 2^{jc_0 d}$ for some $d\in \bN$ which depends on $s,\cR, G$ but not on $N$.
Hence we have obtained 
$E_j\lesssim 2^{jc_0(d -N +Q/2)} \|\sigma\|_{H^s_{l.u.,R},\eta,\cR}$
and, choosing an integer $N$ such that $N>d+Q/2$, 
we have $\sum_{j\geq 0} E_j\lesssim \|\sigma\|_{H^s_{l.u.,R},\eta,\cR}$.

\smallskip

\noindent \textbf{Case $j< 0$:}
By  Lemma \ref{lem_rpiXalpha+kappa} and \eqref{eq_cq_lem_L1omegas},
we have
$$
E_j\lesssim 
2^{-\frac {j c_o}{\nu_\cR}(s-\frac Q2)}
\|\sigma\|_{H^s_{l.u.,R},\cR,\eta} 
\left\|\cF_G^{-1}\left\{\frac {\eta}{\alpha} (\pi(\cR))
\zeta( 2^{-j \frac{c_o\nu_\cS}{\nu_\cR}}\cdot \pi(\cS ))\right\}\right\|_{L^1( \omega_{s}) }
$$
By Lemma \ref{lem_L1omegas}, 
the $L^1( \omega_{s})$-norm is
\begin{eqnarray*}
\lesssim
	\left\|\cF_G^{-1}\left\{\frac {\eta}{\alpha} (\pi(\cR))
\pi(\cS)^{N'}\right\}\right\|_{L^1( \omega_{s}) }
\left\|\cF_G^{-1}\left\{\pi(\cS)^{-N'}\zeta( 2^{-j \frac{c_o\nu_\cS}{\nu_\cR}}\cdot \pi(\cS ))\right\}\right\|_{L^1( \omega_{s}) }
\\
=
\left\|\tilde \cS^{N'}\frac {\eta}{\alpha} (\cR)\delta_0\right\|_{L^1( \omega_{s}) }
2^{jN' \frac{c_o}{\nu_\cR}}
\left\|(\lambda^{-N'}\zeta)( 2^{j \frac{c_o}{\nu_\cR}} \cS )\right\|_{L^1( \omega_{s}) }
\end{eqnarray*}
for a suitable positive integer $N'$.
By Hulanicki's Theorem (Theorem \ref{thm_hula}), 
$\frac {\eta}{\alpha} (\cR)\delta_0\in \cS(G)$ so the first $L^1(\omega_s)$-norm is finite, and the second one is 
$\lesssim 2^{|j|\frac{c_0}{\nu_\cR} d'}$ for some $d'\in \bN$ which depends on $s,\cS, G$ but not on $N'$.
Hence we have obtained 
$E_j\lesssim 2^{j\frac{c_0}{\nu_\cR}(-d' +N' +Q/2-s)} \|\sigma\|_{H^s_{l.u.,R},\eta,\cR}$
and, choosing an integer $N'$ such that $N'>d'-Q/2+s$, 
$\sum_{j<0 } E_j\lesssim \|\sigma\|_{H^s_{l.u.,R},\eta,\cR}$.

This concludes the proof of Proposition \ref{prop_Hslu_norm}.

\subsection{Proof of  Corollary \ref{cor_prop_Hslu_norm}}
\label{subsec_pf_cor_prop_Hslu_norm}

If $\|\sigma\|_{H^s_{l.u.,R},\eta,\cR}=0$, 
then by Lemma \ref{lem_dilation_Hs},
 $$\|\sigma \eta(r \pi(\cR))\|_{H^s}=0,$$ 
and the field $\sigma(\pi) \eta(r \pi(\cR))$ is identically zero for any $r>0$,  since $H^s(\Gh)$ is a normed space.
Choosing $\eta$ such that e.g. $\eta\equiv1 $ on $[1,2]$,
this implies that for any $a,b \geq 0$, 
$\sigma (\pi) E_\pi [a,b]\equiv0$ where $E_\pi$ is the spectral resolution of $\pi(\cR)$ (or equivalently the group Fourier transform of the spectral resolution of $\cR$, see \cite{FR-monograph}).
Hence  $\sigma =0$. 

Let $\{\sigma_\ell\}$ be a Cauchy sequence in $H^s_{l.u.,R}(\Gh)$,
that is, 
\begin{equation}
\label{eq_cor_prop_Hslu_norm}
\forall\epsilon>0 \quad\exists \ell_\epsilon \in \bN \ : \
\forall \ell_1,\ell_2\geq \ell_\epsilon \ \forall r>0\quad
\|(\sigma_{\ell_1}-\sigma_{\ell_2})(r\cdot\pi) \eta(\pi(\cR))\|_{H^s}\leq \epsilon.
\end{equation}
This implies that $\{\sigma_\ell(r\cdot\pi) \eta(\pi(\cR))\}$ is a Cauchy sequence in the Banach space $H^s(\Gh)$ for each $r>0$ fixed. 
For the same reasons as above, 
this shows that  $\{\sigma_\ell E_\pi [a,b]\}$ is a Cauchy sequence in the Banach space $H^s(\Gh)$.
Hence it converges towards a limit $\sigma^{([a,b])}$ in $H^s(\Gh)$
with $\sigma^{([a,b])} = \sigma^{([c,d])} E_\pi [a,b] $ if $[a,b]\subset [c,d]$.
This defines a field of operators $\sigma$ which satisfies
 for each $r>0$ fixed:
$$
\lim_{\ell\to\infty} \sigma_\ell(r\cdot\pi)\eta(\pi(\cR))=
\sigma(r\cdot\pi)\eta(\pi(\cR)).
$$
Passing to the limit in \eqref{eq_cor_prop_Hslu_norm},
this shows that $\sigma$ is also the limit of 
 $\{\sigma_\ell\}$  in $H^s_{l.u.,R}(\Gh)$.
 This shows Part \eqref{item_cor_prop_Hslu_norm_banach}  of  Corollary \ref{cor_prop_Hslu_norm}.

Part \eqref{item_cor_prop_Hslu_norm_inclusion} follows from 
the similar inclusions for $H^s(\Gh)$, 
see Lemma \ref{lem_HsGh_inclusion}.
Part \eqref{item_cor_prop_Hslu_norm_dilation} is easily checked.

It remains to show Part \eqref{item_cor_prop_Hslu_norm_sob}.
Let $s>Q/2$.
We may choose $\eta\in \cD(0,\infty)$ supported in $[1/2,4]$
such that  $0\leq \eta\leq 1$ and $\eta\equiv 1$ on $[1,2]$.
The properties of the functional calculus yield
\begin{eqnarray*}
&&\sup_{\pi\in \Gh} \|\sigma(\pi) E_\pi[r^{-1}, 2r^{-1}]\|_{\sL(\cH_\pi)} 
\leq
\sup_{\pi\in \Gh} \|\sigma(r\cdot\pi) E[1,2] \eta (\pi(\cR))\|_{\sL(\cH_\pi)} 
\\
&&\qquad\leq
\sup_{\pi\in \Gh} \|\sigma(r\cdot\pi)\ \eta (\pi(\cR))\|_{\sL(\cH_\pi)} 
\leq
C\|\sigma \|_{H^s_{l.u.,R},\eta,\cR}
\end{eqnarray*}
by the Sobolev embedding  of $H^s(\Gh)$, 
see Lemma \ref{lem_HsGh_sob_embedding}.
Here, the constant $C$ is independent of $r>0$, 
thus the supremum over $r>0$ of
$\sup_{\pi\in \Gh} \|\sigma(\pi) E_\pi[r^{-1}, 2r^{-1}]\|_{\sL(\cH_\pi)}$
is finite. This shows that $\sigma\in L^\infty(\Gh)$.
This also concludes the proof of Corollary \ref{cor_prop_Hslu_norm}.

\subsection{Proof of Proposition \ref{prop_Hslu_suff_cond}}
\label{subsec_pf_prop_Hslu_suff_cond}

We will prove the second statement for the right spaces.
We have already noted that the  statement requires the slightly more general definition of difference operator alluded to in Remark
\ref{rem_def_Delta}. It is also the case for this proof.

Let $N\in \nu_o\bN$, that is, a positive integer divisible by $\upsilon_1,\ldots,\upsilon_n$.
Let $\sigma \in L^\infty(\Gh)$ be such that
 $\Delta^\alpha \sigma\  \pi(\cR)^{\frac{[\alpha]}\nu}\in L^\infty(\Gh)$ for all $|\alpha|\leq N$.
By Lemma \ref{lem_HsGh_integer},
we have
\begin{eqnarray*}
\|\sigma(r\cdot\pi) \eta(\pi(\cR))\|_{H^N(\Gh)}
&\asymp&
\sum_{[\alpha]\leq N}
\|\Delta_{x^\alpha} \left(\sigma(r\cdot\pi) \eta(\pi(\cR)\right)\|_{L^2(\Gh)}
\\
&\lesssim&
\sum_{[\alpha_1]+[\alpha_2]\leq N}
\|
\Delta_{x^{\alpha_1}} (\sigma(r\cdot\pi)) \
\Delta_{x^{\alpha_2}} \eta(\pi(\cR)) \|_{L^2(\Gh)},
\end{eqnarray*}
by the Leibniz formula, see \eqref{eq_Leibniz_rule_sigma}.
Inserting powers of $\pi(\cR)$,
we have for each term above the estimate
$$
\|\Delta_{x^{\alpha_1}}( \sigma(r\cdot\pi)) \
\Delta_{x^{\alpha_2}} \eta(\pi(\cR)) \|_{L^2(\Gh)}
\leq
\|\Delta_{x^{\alpha_1}} (\sigma(r\cdot\pi) )\
\pi(\cR)^{\frac{[\alpha_1]}{\nu}} \|_{L^\infty(\Gh)}
\|\pi(\cR)^{-\frac{[\alpha_1]}{\nu}}
\Delta_{x^{\alpha_2}} \eta(\pi(\cR)) \|_{L^2(\Gh)}.
$$
For the first term, by \eqref{eq_prop_Hslu_suff_cond_dilation}, we have
$$
\|\Delta_{x^{\alpha_1}} (\sigma(r\cdot\pi))  \
\pi(\cR)^{\frac{[\alpha_1]}{\nu}} \|_{L^\infty(\Gh)}
=
\sup_{\pi_1\in \Gh}
 \|\Delta^\alpha\sigma(\pi_1)\ \pi_1(\cR)^{\frac{[\alpha]}\nu}\|_{\sL(\cH_{\pi_1})}.
$$
For the second term, we define $\eta_M\in \cD(0,\infty)$ via $\eta_M (\lambda) = \lambda^{-M} \eta(\lambda)$ 
for an integer $M\in\bN$ to be chosen.
Using again the Leibniz formula, we have
\begin{eqnarray*}
\|\pi(\cR)^{-\frac{[\alpha_1]}{\nu}}
\Delta_{x^{\alpha_2}} \eta(\pi(\cR)) \|_{L^2(\Gh)}
&\lesssim 
\sum_{[\alpha_3]+[\alpha_4]=[\alpha_2]}
	\|\pi(\cR)^{-\frac{[\alpha_1]}{\nu}} 
(\Delta_{x^{\alpha_3}} \pi(\cR)^M) \pi(\cR)^{\frac{[\alpha_1]}{\nu} -M\nu +\alpha_3}  \|_{L^\infty(\Gh)}
\\
&\qquad\quad \|\pi(\cR)^{-\frac{[\alpha_1]}{\nu} +M\nu -\alpha_3}  \Delta_{x^{\alpha_4}} \eta_M(\pi(\cR)) \|_{L^2(\Gh)}.
\end{eqnarray*}
By Hulanicki's Theorem, see Theorem \ref{thm_hula},
the function
$x^{\alpha_4} \ \eta_M(\pi(\cR))\delta_0$
is Schwartz.
We fix $M$ such that $-\frac{[\alpha_1]}{\nu} +M\nu -\alpha_3\geq 0$ for all $\alpha_3$ as above.
In this way, $x^{\alpha_4} \ \eta_M(\pi(\cR))\delta_0$ 
is in the domain of $\cR^{-\frac{[\alpha_1]}{\nu} +M\nu -\alpha_3}$ by Proposition \ref{prop_powercR1cR2}.
Hence $\|\pi(\cR)^{-\frac{[\alpha_1]}{\nu} +M\nu -\alpha_3}  \Delta_{x^{\alpha_4}} \eta_M(\pi(\cR)) \|_{L^2(\Gh)}$ is finite.
For the $L^\infty(\Gh)$ term, 
easy computations \cite[Lemma 5.2.9]{FR-monograph} show that 
$\Delta_{x^{\alpha_3}} \pi(\cR)^M$ is the image via $\cF$ of a homogeneous left-invariant differential operator $T$ 
of degree $M\nu_0 - [\alpha_3]$.
By \cite[Theorem 4.4.16]{FR-monograph}, 
$\cR^{-\frac{[\alpha_1]}{\nu}} T \cR^{\frac{[\alpha_1]}{\nu} -M\nu +\alpha_3} $ is bounded, 
thus $\|\pi(\cR)^{-\frac{[\alpha_1]}{\nu}} 
(\Delta_{x^{\alpha_3}} \pi(\cR)^M) \pi(\cR)^{\frac{[\alpha_1]}{\nu} -M\nu +\alpha_3}  \|_{L^\infty(\Gh)}$ is finite.
We have therefore obtained that 
$$
\|\sigma(r\cdot\pi) \eta(\pi(\cR))\|_{H^N(\Gh)}
\lesssim  \sum_{[\alpha_1]\leq N}
\sup_{\pi_1\in \Gh}
 \|\Delta^\alpha\sigma(\pi_1)\ \pi_1(\cR)^{\frac{[\alpha]}\nu}\|_{\sL(\cH_{\pi_1})}.
$$
Taking the supremum over $r$ on the left hand side
proves Proposition  \ref{prop_Hslu_suff_cond} for the condition on the right.
For the condition on the left, one can proceed in a similar way or obtain it  by taking the adjoint from the condition on the right, see Lemma \ref{lem_Hslu_LR}.

\subsection{Proof of Theorem \ref{thm_main}}
\label{subsec_pf_thm_main}

Let $\sigma\in H^s_{l.u.,R}$ with $s>Q/2$.
We want to show that the Fourier multiplier operator $T_\sigma$
admits an $L^p$-bounded extension.
We will follow the classical way to do this: we prove that $T_\sigma$ is a Calder\'on-Zygmund operator on the space of homogeneous type $G$, 
see \cite[Ch. III]{coifman+weiss}.

Let $\eta\in \cD(0,\infty)$  be supported in $[1/2,2]$,
valued in $[0,1]$ and satisfying 
$\sum_{j\in \bZ} \eta_j\equiv 1$ on $(0,\infty)$
where $\eta_j(\lambda)=\eta(2^{-j}\lambda)$.
For each $j\in \bZ$ and $\pi\in \Gh$,
we set
$$
\sigma_j(\pi)=\sigma(2^{-j}\cdot \pi)\eta(\pi(\cR)) .
$$
We have $\sigma_j\in H^s(\Gh)$ with 
\begin{equation}
\|\sigma_j\|_{H^s} 
\leq
 \|\sigma\|_{H^s_{l.u.,R},\cR,\eta}.
\label{eq_Hsnorm_sigmaj_Hslusigma}
\end{equation}
By Corollary \ref{cor_prop_Hslu_norm} \eqref{item_cor_prop_Hslu_norm_sob}, 
$\sigma$ and the $\sigma_j$'s are in $L^\infty(\Gh)$ 
and thus define Fourier multipliers
$$
T:\phi \mapsto \cF_G^{-1} \{ \sigma \widehat \phi\}
\quad\mbox{and}\quad
T_j:\phi \mapsto \cF_G^{-1} \{ \sigma_j \widehat \phi\},
$$
which are bounded on $L^2(G)$.
Their convolution kernels are respectively
$\kappa:=\cF_G^{-1}\sigma \in \cS'(G)$, 
and  $\kappa_j:=\cF_G^{-1}\sigma_j$ is in $L^2(G)$.

By Lemma \ref{lem_HsGh_sob_embedding}, 
the function $\kappa_j$ is integrable.

\begin{remark}
Even if it is not needed we can easily show that
$$
\int_G \kappa_j(x) dx
=0.
$$
Indeed denoting by $1_{\Gh}$ the trivial representation, we have
$$
\int_G \kappa_j(x) dx
=\widehat \kappa_j (1_{\Gh})
=\sigma_j(1_{\Gh})
=\sigma(2^{-j}\cdot 1_{\Gh}) \eta(1_{\Gh} (\cR)).
$$
Since the infinitesimal representation of $1_{\Gh}$ is identically zero 
and $\eta$ is supported away from 0, we have
$\eta(1_{\Gh} (\cR))=0$ and therefore the integral of $\kappa_j$ is zero.
\end{remark}

The sum $\sum_{j\in \bZ} \eta_j(\cR)$ 
converges towards the identity in the strong  
operator norm on  $L^2(G)$.
Formally 
we have 
$$
T = \sum_{j\in \bZ} T_ j\eta_j(\cR),
\quad
\sigma=\sum_{j\in \bZ} \sigma_j (2^j \pi),
\quad\mbox{and}\quad
\kappa =\sum_{j\in \bZ} 2^{-Qj}\kappa_j(2^{-j}\cdot).
$$

Let us prove that the last sum has a meaning and that the first Calder\'on-Zygmund condition is statisfied:
\begin{lemma}
\label{lem_kappa_L1loc}
The function $\kappa$ is locally integrable on $G\backslash\{0\}$.
Moreover the sum $\sum_{j\in \bZ} 2^{-Qj}\kappa_j(2^{-j}\cdot)$ 
converges to $\kappa$ in 
$L^1_{loc}(G\backslash\{0\})$.
\end{lemma}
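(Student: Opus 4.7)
The plan is to work annulus by annulus: for $0 < a < b$, set $A := \{x \in G : a \leq |x| \leq b\}$ and $f_j(x) := 2^{-Qj}\kappa_j(2^{-j}\cdot x)$, and reduce the claim to showing that $\sum_j \int_A |f_j|\,dx < \infty$ for every such $A$. By the change of variables $y = 2^{-j}\cdot x$,
$$
\int_A |f_j(x)|\, dx = \int_{2^{-j}\cdot A} |\kappa_j(y)|\, dy,
$$
where $2^{-j}\cdot A = \{y : 2^{-j}a \leq |y| \leq 2^{-j}b\}$; for $j \geq 0$ this set lies inside a quasi-ball near the origin, while for $j < 0$ it sits far from the origin.

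Two uniform bounds on $\kappa_j = \cF_G^{-1}\sigma_j$ drive the estimates. From $\|\sigma_j\|_{H^s(\Gh)} \leq M := \|\sigma\|_{H^s_{l.u.,R},\cR,\eta}$ in \eqref{eq_Hsnorm_sigmaj_Hslusigma}, together with the Plancherel formula \eqref{eq_plancherel_formula} and Corollary \ref{cor_prop_polar_coord} (with $\epsilon > 0$ chosen so that $s - \epsilon > Q/2$), one gets
$$
\|\kappa_j\|_{L^2(G)} \leq M \quad\mbox{and}\quad \|(1+|\cdot|)^\epsilon \kappa_j\|_{L^1(G)} \leq CM,
$$
uniformly in $j \in \bZ$. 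For $j \geq 0$, Cauchy--Schwarz and the $Q$-homogeneity of Haar measure (Proposition \ref{prop_polar_coord}) yield
$$
\int_{2^{-j}\cdot A}|\kappa_j|\, dy \leq |\{|y| \leq 2^{-j}b\}|^{1/2}\, \|\kappa_j\|_{L^2(G)} \leq C\, b^{Q/2}\, 2^{-jQ/2}\, M,
$$
while for $j < 0$, enlarging to $\{|y| \geq 2^{-j}a\}$ and using the weighted $L^1$ bound gives
$$
\int_{2^{-j}\cdot A}|\kappa_j|\, dy \leq (1 + 2^{-j}a)^{-\epsilon}\, \|(1+|\cdot|)^\epsilon \kappa_j\|_{L^1(G)} \leq C\, a^{-\epsilon}\, 2^{j\epsilon}\, M.
$$
Both bounds are geometric in $|j|$, so $\sum_j \int_A |f_j|$ converges and $\sum_j f_j$ defines an element $\tilde\kappa \in L^1_{loc}(G\setminus\{0\})$.

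It then remains to identify $\tilde\kappa$ with $\kappa$ on $G\setminus\{0\}$. The partial sums $S_N := \sum_{|j|\leq N} f_j$ converge to $\tilde\kappa$ in $L^1_{loc}(G\setminus\{0\})$, hence in $\mathcal D'(G\setminus\{0\})$; on the Fourier side, Lemma \ref{lem_rpiXalpha+kappa} identifies $\widehat{S_N}$ as $\sigma$ multiplied by a dyadic spectral cutoff in $\pi(\cR)$ built from $\eta$, and since the point $0$ is negligible in the spectrum of every $\pi(\cR)$ by Lemma \ref{lem_normL2_E(epsilon,infty)}, these cutoffs converge strongly to the identity, so $S_N \to \kappa$ in $\cS'(G)$. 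Uniqueness of the distributional limit on $G\setminus\{0\}$ then forces $\tilde\kappa = \kappa$ there. The main obstacle is precisely this identification of the limit, since we must reconcile two a priori different modes of convergence; the decay estimates themselves are routine consequences of the Sobolev machinery developed in Section \ref{sec_HsGh}.
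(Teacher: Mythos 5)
Your proof is correct and follows essentially the same route as the paper: the same dyadic change of variables $y=2^{-j}\cdot x$, an $L^2$/Cauchy--Schwarz bound giving $2^{-jQ/2}$ decay when the rescaled annulus is near the origin, and the weighted $L^1$ bound from Corollary \ref{cor_prop_polar_coord} giving $2^{j\epsilon}$ decay when it is far away, all controlled uniformly by $\|\sigma_j\|_{H^s}\leq\|\sigma\|_{H^s_{l.u.,R},\cR,\eta}$. Your final paragraph identifying the $L^1_{loc}$ limit with $\kappa$ via strong convergence of the spectral cutoffs (Lemma \ref{lem_normL2_E(epsilon,infty)}) makes explicit a step the paper leaves implicit, and is a sound addition.
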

\begin{proof}
Let $m\in \bZ$ be fixed.
By the change of variable given by the dilation $D_j$, 
we have for each $j\in \bZ$ that
$$
\int_{2^m\leq |x|\leq 2^{m+1}} 
|2^{-Qj}\kappa_j(2^{-j}x)| dx
=
\int_{2^{m-j}\leq |x|\leq 2^{m-j+1}} 
|\kappa_j(x)| dx.
$$

If $m-j\geq0$, we have
\begin{eqnarray*}
&&\int_{2^{m-j}\leq |x|\leq 2^{m-j+1}} 
|\kappa_j(x)| dx
=
\int_{2^{m-j}\leq |x|\leq 2^{m-j+1}} 
|\kappa_j(x)|  (1+|x|)^\epsilon (1+|x|)^{-\epsilon}  dx
\\
&&\qquad\lesssim
 2^{(m-j)(-\epsilon)} \|\kappa_j  (1+|\cdot|)^\epsilon\|_{L^1(G)}
\lesssim 
 2^{(j-m)\epsilon} 
\|\kappa_j  (1+|\cdot|)^s\|_{L^2(G)} ,
\end{eqnarray*}
by Corollary \ref{cor_prop_polar_coord}, 
as long as $s-\epsilon >Q/2$.

If $m-j<0$, we have by the Cauchy-Schwartz inequality that
\begin{eqnarray*}
&&\int_{2^{m-j}\leq |x|\leq 2^{m-j+1}} 
|\kappa_j(x)| dx
=
\int_G
(1+|x|)^s|\kappa_j(x)| 
(1+|x|)^{-s} 1_{2^{m-j}\leq |x|\leq 2^{m-j+1}}
dx
\\
&&\qquad\leq 
\|\kappa_j  (1+|\cdot|)^s\|_{L^2(G)}
\| (1+|\cdot|)^{-s} 1_{2^{m-j}\leq |x|\leq 2^{m-j+1}}\|_{L^2(G)}.
\end{eqnarray*}
Note that 
$$
\| (1+|\cdot|)^{-s} 1_{2^{m-j}\leq |x|\leq 2^{m-j+1}}\|_{L^2(G)}
\lesssim 2^{(m-j) \frac Q2},
$$
and that 
by \eqref{eq_Hsnorm_sigmaj_Hslusigma},
$$
\|\kappa_j  (1+|\cdot|)^s\|_{L^2(G)}
=\|\sigma_j\|_{H^s}
\leq
 \|\sigma\|_{H^s_{l.u.,R},\cR,\eta}.
$$

We choose $\epsilon = (s+Q/2)/2$.
We can now sum over $j\in \bZ$ to obtain
\begin{eqnarray*}
&&\sum_{j\in \bZ}
\int_{2^m\leq |x|\leq 2^{m+1}} 
|2^{-Qj}\kappa_j(2^{-j}x)| dx
=\sum_{j=-\infty}^{m-1}+\sum_{j=m}^\infty
\\&&\qquad\lesssim
\sum_{j=-\infty}^{m-1}
 2^{\frac Q2 (m-j)}  \|\sigma\|_{H^s_{l.u.,R},\cR,\eta}
+
\sum_{j=m}^\infty
 2^{(j-m)\epsilon} 
 \|\sigma\|_{H^s_{l.u.,R},\cR,\eta}
\\&&\qquad\lesssim
 \|\sigma\|_{H^s_{l.u.,R},\cR,\eta}.
\end{eqnarray*}
This implies that 
$\kappa =\sum_{j\in \bZ} 2^{-Qj}\kappa_j(2^{-j}\cdot)$
 is integrable on 
$\{2^{m}\leq |x|\leq 2^{m+1}\}$.
Therefore $\kappa$ is locally integrable on $G\backslash\{0\}$. 
\end{proof}

Let us show the Calder\'on-Zygmund inequality on the kernel:
\begin{lemma}
\label{lem_CZ}
Let us rewrite
$$
K(x,y)=\kappa (y^{-1}x)
\quad\mbox{and}\quad
d(x,y)=|y^{-1}x|.
$$

There exists $C>0$ such that for any two distinct points $y,y'\in G$
we have:
$$
\int_{d(x,y)>4c d(y,y')}
|K(x,y)-K(x,y')|dx \leq C \|\sigma\|_{H^s_{l.u.,R},\cR,\eta}.
$$

For $K_*(x,y)=\kappa^*(y^{-1}x)=\bar \kappa(x^{-1} y)$,
there exists $C>0$ such that for any two distinct points $y,y'\in G$
we have
$$
\int_{d(x,y)>4c d(y,y')}
|K_*(x,y)-K_*(x,y')|dx \leq C   \|\sigma\|_{H^s_{l.u.,L},\cR,\eta}.
$$
\end{lemma}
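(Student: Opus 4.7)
The plan is to prove the first inequality; the bound for $K_*$ then follows from the mirror argument using the ``left'' dyadic pieces $\tilde\sigma_j(\pi)=\eta(\pi(\cR))\sigma(2^{-j}\cdot\pi)$, whose $H^s$--norms are controlled by $\|\sigma\|_{H^s_{l.u.,L},\eta,\cR}$. Setting $h=y'^{-1}y$ (so $|h|=d(y,y')$) and substituting $z=y^{-1}x$ by left-invariance of the Haar measure reduces the integral to $\int_{|z|>4c|h|}|\kappa(z)-\kappa(hz)|\,dz$. Inserting the dyadic decomposition $\kappa=\sum_{j\in\bZ}2^{-Qj}\kappa_j\circ D_{2^{-j}}$ (which converges in $L^1_{\mathrm{loc}}(G\setminus\{0\})$ by Lemma \ref{lem_kappa_L1loc}), using that $D_{2^{-j}}$ is a group automorphism to factor $D_{2^{-j}}(hz)=h_j\cdot D_{2^{-j}}(z)$ with $h_j:=2^{-j}\cdot h$, and changing variables $u=2^{-j}\cdot z$, I reduce the task to bounding
\[
\sum_{j\in\bZ}I_j,\qquad I_j:=\int_{|u|>4c|h_j|}|\kappa_j(u)-\kappa_j(h_j u)|\,du,\qquad |h_j|=2^{-j}|h|.
\]

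I then split the sum according to $|h_j|\ge 1$ versus $|h_j|<1$, i.e., $j\le\log_2|h|$ versus $j>\log_2|h|$. In the \emph{large} regime I use the triangle inequality to separate the two terms and exploit the truncation $|u|>4c|h_j|$ to gain decay: the bound $(1+|u|)^{-\epsilon}\lesssim|h_j|^{-\epsilon}$ together with Corollary \ref{cor_prop_polar_coord} and \eqref{eq_Hsnorm_sigmaj_Hslusigma} give
\[
\int_{|u|>4c|h_j|}|\kappa_j(u)|\,du\le C|h_j|^{-\epsilon}\|(1+|\cdot|)^\epsilon\kappa_j\|_{L^1(G)}\le C|h_j|^{-\epsilon}\|\sigma\|_{H^s_{l.u.,R},\eta,\cR}
\]
as soon as $s>Q/2+\epsilon$. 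For the second term, substituting $v=h_j u$ and invoking Proposition \ref{prop_homogeneous_quasi_norm} (with $c$ chosen large enough in terms of the quasi-norm triangle constant) shows that the transformed region is contained in $\{|v|\gtrsim|h_j|\}$, giving the same estimate.

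In the \emph{small} regime $|h_j|<1$, the right version of the $L^1$--mean value property (Lemma \ref{lem_L1mv}) yields
\[
I_j\le\int_G|\kappa_j(u)-\kappa_j(h_j u)|\,du\le C\sum_{\ell=1}^n|h_j|^{\upsilon_\ell}\|\tilde X_\ell\kappa_j\|_{L^1(G)},
\]
so the core estimate becomes a uniform bound on $\|\tilde X_\ell\kappa_j\|_{L^1(G)}$. Since $\cF_G(\tilde X_\ell\kappa_j)(\pi)=\sigma_j(\pi)\,\pi(X_\ell)$, I fix $\tilde\eta\in\cD(0,\infty)$ with $\tilde\eta\equiv 1$ on $\supp\eta$ and factor $\sigma_j\pi(X_\ell)=\sigma_j\cdot\tau$ with $\tau(\pi)=\tilde\eta(\pi(\cR))\pi(X_\ell)$. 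By Hulanicki's Theorem \ref{thm_hula} the kernel $\cF_G^{-1}\tau=\tilde X_\ell\tilde\eta(\cR)\delta_0$ is Schwartz, so both $\|\cF_G^{-1}\tau\|_{L^1(G)}$ and $\|\tau\|_{H^s}$ are finite. The algebra property (Lemma \ref{lem_HsGh_algebra}) combined with the Sobolev embedding (Lemma \ref{lem_HsGh_sob_embedding}) then gives $\|\sigma_j\pi(X_\ell)\|_{H^s}\le C\|\sigma_j\|_{H^s}\le C\|\sigma\|_{H^s_{l.u.,R},\eta,\cR}$, and Corollary \ref{cor_prop_polar_coord} transfers this to $\|\tilde X_\ell\kappa_j\|_{L^1(G)}\le C\|\sigma\|_{H^s_{l.u.,R},\eta,\cR}$.

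Summing over $j$: in the large regime $\sum_{j\le\log_2|h|}(2^{-j}|h|)^{-\epsilon}=|h|^{-\epsilon}\sum 2^{j\epsilon}\lesssim 1$, and in the small regime $\sum_{j>\log_2|h|}(2^{-j}|h|)^{\upsilon_\ell}\lesssim 1$ for each $\ell$, so both contributions are $O(\|\sigma\|_{H^s_{l.u.,R},\eta,\cR})$. The main obstacle is precisely the uniform $L^1$--bound on $\tilde X_\ell\kappa_j$: this is where the \emph{right} nature of $H^s_{l.u.,R}(\Gh)$ is essential, since the Fourier transform of a right-invariant derivative puts $\pi(X_\ell)$ to the \emph{right} of $\sigma_j$, which is exactly the side on which the algebra property controls the result. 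The argument for $K_*$ is the exact mirror: the substitution $z=x^{-1}y$ produces $\kappa(z)-\kappa(zh)$, the relevant mean value property uses \emph{left}-invariant derivatives $X_\ell$ whose Fourier transforms place $\pi(X_\ell)$ on the \emph{left} of $\sigma_j$, forcing the use of the left pieces $\tilde\sigma_j$ and hence of $\|\sigma\|_{H^s_{l.u.,L},\eta,\cR}$.
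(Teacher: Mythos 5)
Your proof is correct and follows essentially the same route as the paper's: the same change of variables and dyadic decomposition, the same split into the regimes $|h_j|\gtrsim 1$ (handled by the truncation and the weighted $L^1$--$L^2$ estimate of Corollary \ref{cor_prop_polar_coord}) and $|h_j|<1$ (handled by Lemma \ref{lem_L1mv} and the uniform bound on $\|\tilde X_\ell\kappa_j\|_{L^1}$ via Hulanicki's theorem and the $H^s(\Gh)$ product estimate). The only cosmetic differences are that you invoke Lemma \ref{lem_HsGh_algebra} together with the Sobolev embedding where the paper uses the one-sided inequality \eqref{eq_cq_lem_L1omegas}, and that you spell out more explicitly than the paper why the $K_*$ case forces the left dyadic pieces and the $H^s_{l.u.,L}$ norm.
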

Here $c$ denotes the constant in the triangular inequality for the chosen quasi-norm, see Proposition \ref{prop_homogeneous_quasi_norm}.

\begin{proof}[Proof of Lemma \ref{lem_CZ}]
Let $y,y'\in G$ be two distinct points.
Let $h$ be the point $h:={y'}^{-1} y$ in $G\backslash\{0\}$
and let $m\in \bZ$ be the integer such that 
$2^m \leq 4c |h|<2^{m+1}$.
After the change of variable $z=y^{-1}x$ we see that
$$
\int_{d(x,y)>4c d(y,y')}
\!\!\!\!\!\!\!\!\!\!\!\! 
|K(x,y)-K(x,y')|dx 
=
\int_{|z|> 4c |h|}\!\!\! \!\!\! \!\!\! 
|\kappa(z)-\kappa(hz)| dz
\leq
\sum_{j\in \bZ} I_j,
$$
where
$$
I_j:=\int_{|z|> 4c |h|} |2^{-jQ}\kappa_j(2^{-j}z)-2^{-jQ}\kappa_j(2^{-j}(hz))| dz,
$$
since  $\kappa=\sum_j 2^{-jQ} \kappa (2^{-j}\cdot)$.
Using  the change of variable $2^{-j}z=w$, 
we have
\begin{eqnarray*}
I_j=
\int_{2^j|w|> 4c |h|}
\!\!\!\!\!\!\!\!\!\!\!\!
 |\kappa_j(w)-\kappa_j((2^{-j}h) w))| dw.
\end{eqnarray*}

If $j<m$
we use
\begin{eqnarray*}
I_j&\leq&
\int_{2^j|w|> 4c |h|} |\kappa_j(w)| dw
+
\int_{2^j|(2^{-j}h)^{-1}w'|> 4c |h|} |\kappa_j( w'))| dw',
\end{eqnarray*}
after the change of variable $w'=(2^{-j}h) w$.
The triangular inequality implies that
$$
2^j|(2^{-j}h)^{-1}w'|> 4c |h|
\Longrightarrow
|w'|> 3c 2^{-j}|h| \geq \frac 34 2^{-j+m}.
$$
Therefore,
$$
I_j\leq
2\int_{|w|> \frac 34 2^{-j+m}} |\kappa_j(w)| dw
\lesssim  2^{\epsilon(j-m)}\|(1+|\cdot|)^\epsilon \kappa_j \|_{L^1(G)}.
$$
 By Corollary \ref{cor_prop_polar_coord} and Lemma \ref{lem_HsGh_sob_embedding} with \eqref{eq_Hsnorm_sigmaj_Hslusigma},
 we have
$$
\|(1+|\cdot|)^\epsilon \kappa_j \|_{L^1(G)}
\lesssim
\|\sigma\|_{H^s_{l.u.},\cR,\eta}.
$$
So we have obtained in the case $j<m$,
$$
I_j\lesssim 2^{\epsilon(j-m)}\|\sigma\|_{H^s_{l.u.},\cR,\eta}.
$$

If $j\geq m$, 
we use the $L^1$-mean value theorem given in  Lemma \ref{lem_L1mv}:
$$
I_j
\lesssim 
\sum_{\ell=1}^n
|2^{-j}h|^{\upsilon_\ell}
\|\tilde X_\ell \kappa_j\|_{L^1(G)}
\lesssim 
2^{m-j}
\sum_{\ell=1}^n
\|\tilde X_\ell \kappa_j\|_{L^1(G)}.
$$
as $1\leq \upsilon_1\leq \ldots\leq \upsilon_n$.
 By Corollary \ref{cor_prop_polar_coord} and Lemma \ref{lem_HsGh_sob_embedding}, 
 we have
$$
\|\tilde X_\ell \kappa_j\|_{L^1(G)}
\lesssim
\|(\tilde X_\ell \kappa_j) (1+|\cdot|)^s\|_{L^2(G)}
$$
and by the Plancherel formula (see \eqref{eq_plancherel_formula})
\begin{eqnarray*}
\|(\tilde X_\ell \kappa_j) (1+|\cdot|)^s\|_{L^2(G)}
&=&
\|  \sigma_j(\pi) \pi(X_\ell)\|_{H^s(\Gh)}
\\
&=&
\|\sigma(2^{-j}\cdot \pi) \eta(\pi(\cR)) \ 
\omega(\pi(\cR)) \pi(X_\ell)\|_{H^s(\Gh)},
\end{eqnarray*}
where $\omega\in \cD(0,\infty)$ is identically equal to 1 on the support of $\eta$.
By Hulanicki's Theorem, cf. Theorem \ref{thm_hula},
the function 
$g_\ell :=\cF_G^{-1}\{\omega(\pi(\cR)) \pi(X_\ell)\}$
is Schwartz.
By \eqref{eq_cq_lem_L1omegas},
we have 
\begin{eqnarray*}
&&\|\sigma(2^{-j}\cdot \pi) \eta(\pi(\cR)) \ 
\omega(\pi(\cR)) \pi(X_\ell)\|_{H^s(\Gh)}
=
\|\sigma(2^{-j}\cdot \pi) \eta(\pi(\cR)) \ \widehat g_\ell(\pi)
\|_{H^s(\Gh)}
\\&&\qquad\lesssim
\|\sigma(2^{-j}\cdot \pi) \eta(\pi(\cR)) \|_{H^s(\Gh)}
\leq
 \|\sigma\|_{H^s_{l.u.,R},\cR,\eta}.
\end{eqnarray*}
So we have obtained in the case $j\geq m$ that
$$
I_j\lesssim 2^{m-j}  \|\sigma\|_{H^s_{l.u.,R},\cR,\eta}.
$$

We can now go back to
\begin{eqnarray*}
\sum_{j\in \bZ} I_j
\lesssim 
 \sum_{j<m} 2^{\epsilon(j-m)}\|\sigma\|_{H^s_{l.u.,R},\cR,\eta}+
 \sum_{j\geq m} 2^{m-j} \|\sigma\|_{H^s_{l.u.,R},\cR,\eta}
\lesssim
 \|\sigma\|_{H^s_{l.u.,R},\cR,\eta}.
\end{eqnarray*}

For $K_*$, 
after the change of variable $z=x^{-1}y$ and setting $h'=y^{-1}y'$,
we see that
$$
\int_{d(x,y)>4c d(y,y')}
|K_*(x,y)-K_*(x,y')|dx 
=
\int_{|z|> 4c |h|} |\kappa(z)-\kappa(zh')| dz.
$$
We proceed exactly in the same way as above using left invariant vector fields $X_\ell$.
\end{proof}

Hence the operator $T$ satisfies the hypotheses of 
 the Calder\'on-Zygmund theorem in the context of graded Lie groups,
 and more generally on spaces of homogeneous type cf. 
 \cite[Ch. III]{coifman+weiss}.
This implies Theorem \ref{thm_main} and the following 
proof of Corollary \ref{cor_thm_main}.

\begin{proof}[Proof of Corollary \ref{cor_thm_main}]
Part (1) follows from Corollary \ref{cor_prop_Hslu_norm}.
For Part (2), 
Lemmata \ref{lem_kappa_L1loc}  and \ref{lem_CZ} show that, 
if $\sigma\in H^s_{l.u.,R} $ for some $s>Q/2$, 
then $\kappa$ is a Calder\'on-Zygmund kernel, see 
\cite[Ch. III]{coifman+weiss} or \cite[\S3.2.3]{FR-monograph}.
We proceed in the same way for  Part (3):
using Lemma \ref{lem_Hslu_LR}:
if $\sigma\in H^s_{l.u.,L} $ for some $s>Q/2$, 
then $\kappa^*$ is a Calder\'on-Zygmund kernel.
As $T^*=T_{\sigma^*}$, this shows (3).
\end{proof}

\end{document}